\theoremstyle{plain}
\newtheorem{lemma}{Lemma}[section]
\newtheorem{theorem}[lemma]{Theorem}
\newtheorem{proposition}[lemma]{Proposition}
\theoremstyle{definition}
\newtheorem{remark}[lemma]{Remark}
\newtheorem{definition}[lemma]{Definition}
\numberwithin{equation}{section}
\def\P{\mathbb{P}}
\def\E{\mathbb{E}}
\def\M{\text{M}}
\newcommand{\ZZ}{\mathbb{Z}}
\newcommand{\RR}{\mathbb{R}}
\newcommand{\mA}{\mathcal{A}}
\newcommand{\mC}{\mathcal{C}}
\newcommand{\mD}{\mathcal{D}}
\newcommand{\mE}{\mathcal{E}}
\newcommand{\mJ}{\mathcal{J}}
\newcommand{\mS}{\mathcal{S}}
\newcommand{\mT}{\mathcal{T}}
\newcommand{\mV}{\mathcal{V}}
\newcommand{\bks}{\backslash}
\newcommand{\half}{\frac{1}{2}}
\newcommand{\tZ}{\tilde{Z}}
\newcommand{\tw}{\tilde{\omega}}
\newcommand{\hr}{\hat{r}}
\newcommand{\hs}{\hat{s}}
\newcommand{\hH}{\hat{H}}
\newcommand{\hw}{\hat{\omega}}
\newcommand{\bbL}{\mathbb{L}}
\newcommand{\hmS}{\hat{\mathcal{S}}}
\begin{document}

\title[Subgaussian concentration and rates of convergence.]{Subgaussian concentration and rates of convergence in directed polymers.}
\author{Kenneth S. Alexander}
\address{Department of Mathematics \\
University of Southern California\\
Los Angeles, CA  90089-2532 USA}
\email{alexandr@usc.edu}
\thanks{The research of the first author was supported by NSF grants
DMS-0405915 and DMS-0804934. The research of the second author was partially supported by IRG-246809. The second author would also like to acknowledge the hospitality of the University of Southern California and Academia Sinica, Taipei, where parts of this work were completed.
}
\author{Nikos Zygouras}
\address{Department of Statistics\\
University of Warwick\\
Coventry CV4 7AL, UK}
\email{N.Zygouras@warwick.ac.uk}

\keywords{directed polymers, concentration, modified Poincar\'e inequalities, coarse graining}
\subjclass{Primary: 82B44; Secondary: 82D60, 60K35}

\maketitle

\begin{abstract} We consider directed random polymers in $(d+1)$ dimensions with nearly gamma i.i.d. disorder.  We study the partition function $Z_{N,\omega}$ and establish exponential concentration of $\log Z_{N,\omega}$ about its mean on the subgaussian scale $\sqrt{N/\log N}$ . This is used to show that $\mathbb{E}[ \log Z_{N,\omega}]$ differs from $N$ times the free energy by an amount which is also subgaussian (i.e. $o(\sqrt{N})$), specifically $O( \sqrt{\frac{N}{\log N}}\log \log N)$.
\end{abstract}

\section{Introduction.}
We consider a symmetric simple random walk on $Z^d$, $d\geq 1$. We denote the paths of the walk by $(x_n)_{n\geq 1}$ and its distribution (started from 0) by $P$.  Let $(\omega_{n,x})_{n\in \mathbb{N},x\in \mathbb{Z}^d}$ be a collection of i.i.d. mean-zero random variables with distribution $\nu$ and denote their joint distribution by $\mathbb{P}$.  We think of $(\omega_{n,x})_{n\in \mathbb{N},x\in \mathbb{Z}^d}$ as a random potential with the random walk moving inside this potential. This interaction gives rise to the directed polymer in a random environment and can be formalised by the introduction of the following Gibbs measure on paths of length $N$:
\begin{eqnarray*}
d\mu_{N,\omega}=\frac{1}{Z_{N,\omega}} e^{\beta\sum_{n=1}^N\omega_{n,x_n}} dP,
\end{eqnarray*}
where $\beta>0$ is the inverse temperature. The normalisation 
\begin{eqnarray}\label{partitionf}
Z_{N,\omega}=E\left[\exp\left(\beta\sum_{n=1}^N\omega_{n,x_n}\right)\right] 
\end{eqnarray}
 is the partition function. 
 
 A central question for such polymers is how the fluctuations of the path are influenced by the presence of the disorder. Loosely speaking, consider the two exponents $\xi$ and $\chi$ given by
 \begin{eqnarray*}
 E_{N,\omega}[|x_N|^2]\sim N^{2\xi}, \quad  \mathbb{V}\text{ar}\left(\log Z_{N,\omega}\right)\sim N^{2\chi}.
 \end{eqnarray*}
It is believed that $\chi<1/2$ for all $\beta > 0 $ and all $d$ (see \cite{KS}.)
It is expected and partially confirmed for some related models (\cite{Wut1}, \cite{Chat1}) that the two exponents $\chi,\xi$ are related via 
  \begin{eqnarray}\label{hyperscaling}
 \chi=2\xi-1.
 \end{eqnarray}
 So there is reason for interest in the fluctuations of $\log Z_{N,\omega}$, and in particular in establishing that these fluctuations are \emph{subgaussian}, that is, $o(N^{1/2})$, as compared to the gaussian scale $N^{1/2}$.  It is the $o(\cdot)$ aspect that has not previously been proved:  in \cite{Pi97} it is proved that in the point-to-point case (that is, with paths $(x_n)_{n\geq 1}$ restricted to end at a specific site at distance $N$ from the origin) one has variance which is $O(N)$ when the disorder has finite variance, and an exponential bound for $|\log Z_{N,\omega} - \E\log Z_{N,\omega}|$ on scale $N^{1/2}$ when the disorder has an exponential moment.
 
 The zero-temperature case of the polymer model is effectively last passage percolation.  More complete results exist in this case in dimension $1+1$, for specific distributions \cite{Joh}. There, based on exact computations related to combinatorics and random matrix theory, not only the scaling exponent $\chi$ for the directed last passage time  was obtained, but also its limiting distribution after centering and scaling. A first step towards an extension of this type of result in the case of directed polymers in dimension $1+1$ for particular disorder is made in \cite{COSZ}; see also \cite{BC} for a step towards asymptotics.  The best known result for undirected point-to-point last passage percolation is in \cite{BKS03}, stating that for $v\in \mathbb{Z}^d$, $d\geq 2$, one has $\mathbb{V}\text{ar}(\max_{\gamma:0\to v}\sum_{x\in\gamma}\omega_x)\leq C|v|/\log|v|$,  when the disorder $\omega$ is Bernoulli. Some results on sublinear  variance estimates for directed last passage percolation in $1+1$ dimensions with gaussian disorder were obtained in \cite{Chat2}, but the type of estimates there does not extend to higher dimensions, or to directed polymers at positive temperature. The assumption of gaussian disorder is also strongly used there. In \cite{G} estimates of the variance of directed last passage percolation are obtained via a coupling method, which appears difficult to extend to the case of polymers. In \cite{BR08} exponential concentration estimates on the scale $(|v|/\log |v|)^{1/2}$ were obtained for first passage percolation, for a large class of disorders.
 
  The extension of these results to directed polymers is not straightforward. This can be be seen, for example, from the fact that subgaussian fluctuations for a point-to-point directed polymer can naturally fail.   Such failure occurs, for example, if one restricts the end point of a $(1+1)$-dimensional directed polymer to be $(N,N)$. Then \eqref{partitionf} reduces to a sum of i.i.d. variables whose fluctuations are therefore gaussian. 
  
  The first result of the present paper is to obtain exponential concentration estimates on the scale $(N/\log N)^{1/2}$. Specifically, for {\it nearly gamma} disorder distributions (see Definition \ref{nearly gamma}, a modification of the definition in \cite{BR08}) we prove the following; here and throughout the paper we use $K_i$ to denote constants which depend only on $\beta$ and $\nu$.

\begin{theorem} \label{expconc}
Suppose the disorder distribution $\nu$ is nearly gamma with $\int e^{4\beta|\omega|} \nu(d\omega)<\infty$. Then there exist $K_0, K_1$ such that
\begin{eqnarray*}
\mathbb{P}\left( \left| \log Z_{N,\omega}-\mathbb{E}\log Z_{N,\omega}\right|>t\sqrt{\frac{N}{\log N}}\right)\leq K_0e^{-K_1t},
\end{eqnarray*}
for all $N\geq 2$ and $t>0$.
\end{theorem}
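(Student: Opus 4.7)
My approach will follow the Benjamini--Kalai--Schramm scheme \cite{BKS03} for sublinear variance, combined with the Bena\"im--Rossignol \cite{BR08} enhancement that turns a variance bound into exponential concentration, adapted here from first passage percolation to positive-temperature directed polymers. The nearly gamma hypothesis on $\nu$ is what allows the product measure $\mathbb{P}$ to satisfy a Falik--Samorodnitsky-type modified Poincar\'e/log-Sobolev inequality, and that inequality is the analytic backbone of the argument.

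The first step is the elementary identity
\begin{equation*}
\frac{\partial \log Z_{N,\omega}}{\partial \omega_{n,x}} \;=\; \beta\,\mu_{N,\omega}(x_n = x).
\end{equation*}
Since $\mu_{N,\omega}(x_n=\cdot)$ is a probability measure, $\sum_{x}\mu_{N,\omega}(x_n = x)^{2}\le \max_x \mu_{N,\omega}(x_n=x)\le 1$ and hence
\begin{equation*}
\sum_{n,x}\Bigl(\frac{\partial \log Z_{N,\omega}}{\partial \omega_{n,x}}\Bigr)^{2} \;\le\; \beta^{2} N.
\end{equation*}
Inserted into the modified Poincar\'e inequality and iterated through a Herbst-type argument on the Laplace transform of $\log Z_{N,\omega}-\mathbb{E}\log Z_{N,\omega}$, this already produces exponential concentration on the Gaussian scale $\sqrt{N}$; the extra logarithmic factor demanded by the theorem has to come from a different step.

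That step will be a BKS-style coarse graining. Fix a mesoscopic box $B\subset\mathbb{Z}^{d}$ of volume $m\asymp \log N$ and put
\begin{equation*}
\bar Z_{N,\omega}\;:=\;\frac{1}{m}\sum_{y\in B}Z^{(y)}_{N,\omega},
\end{equation*}
where $Z^{(y)}$ denotes the partition function of a polymer started at $y$. Two estimates are needed: (i) $|\log Z_{N,\omega}-\log \bar Z_{N,\omega}|$ is $O(1)$ with exponential tails, proved by comparing $Z^{(0)}$ to $Z^{(y)}$ via a short deterministic initial bridge of length $|y|$ and controlling the disorder seen along it through $\int e^{4\beta|\omega|}\nu(d\omega)<\infty$; and (ii) the influences of $\log \bar Z_{N,\omega}$ are spread out,
\begin{equation*}
\sum_{n,x}\Bigl(\frac{\partial \log \bar Z_{N,\omega}}{\partial \omega_{n,x}}\Bigr)^{2} \;\le\; \frac{CN}{\log N},
\end{equation*}
because a given disorder variable $\omega_{n,x}$ is ``seen'' by only an $O(1/m)$ fraction of the translated partition functions. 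Feeding this improved influence bound back into the same Falik--Samorodnitsky/Herbst machinery then yields exponential concentration of $\log \bar Z_{N,\omega}$ on the scale $\sqrt{N/\log N}$, and the $O(1)$ comparison (i) transfers it to $\log Z_{N,\omega}$ at the cost of only a constant.

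The main obstacle will be step (ii). In first passage percolation a geodesic is essentially a single path, so translating the environment by $\tau\in B$ cleanly distributes its ``use'' of any given edge among $m$ different realisations and the per-edge influence is reduced by a factor $m$. In the positive-temperature polymer case $\bar Z$ is a weighted mixture of $m$ Gibbs measures with data-dependent weights $w_{y}(\omega)=Z^{(y)}_{N,\omega}/(m\bar Z_{N,\omega})$, and a direct Cauchy--Schwarz does not gain a factor of $m$. What has to be shown is that for typical $\omega$ no single $w_{y}$ can dominate the average, which is itself a concentration statement about the family $\{Z^{(y)}_{N,\omega}\}_{y\in B}$. Closing this loop --- either by bootstrapping from the Gaussian-scale estimate already in hand or by combining (i) with the exponential moment hypothesis to rule out exceptional $\omega$ --- is the delicate technical point, and is the reason the nearly gamma hypothesis (rather than a generic sub-Gaussian one) is imposed on $\nu$.
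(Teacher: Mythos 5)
Your plan diverges from the paper's proof in a way that is not just technical polish but a change of approach, and the obstacle you flag at the end is in fact a fatal one for the route you propose. Two distinct issues:

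\textbf{(1) Averaging $\log Z$ versus $\log$ of an average of $Z$'s.} You define $\bar Z_{N,\omega}=\frac{1}{m}\sum_{y\in B}Z^{(y)}_{N,\omega}$ and then differentiate $\log\bar Z$; this produces a \emph{weighted} mixture of the translated Gibbs measures with data-dependent weights $w_y=Z^{(y)}/(m\bar Z)$, and as you correctly observe, nothing forces these weights to be close to $1/m$ --- for typical $\omega$ one $Z^{(y)}$ can exceed the others by a factor $e^{c\sqrt N}$, in which case the mixture degenerates to a single $\mu^{(y)}$ and there is no averaging gain at all. This is not a loose end to be closed by bootstrapping; it is the reason the paper works instead with the \emph{unweighted} average of log-partition functions
\[
\overline F^I_{N,\omega}=\frac{1}{|I|}\sum_{(n,x)\in I}\log Z^{(n,x)}_{N,\omega},
\]
over $I=I_\pm^\alpha$, a hyperplane at time $\pm N^\alpha$ of spatial width $N^\alpha$. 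With this choice the derivative is a genuine uniform average $\frac{\beta}{|I|}\sum_{(n,x)\in I}\mu^{(n,x)}_{N,\omega}(1_{(m-n,y-x)\in\gamma_N})$, and the key combinatorial observation is that a directed path meets the hyperplane $I_+^\alpha$ at most once, so $\sum_{(n,x)\in I_+^\alpha}1_{x+x_{m-n}=y}\le 1$; this is what makes the single-site influence $r_N$ of order $|I|^{-1/4}$ and feeds the whole estimate. There is no analogue of this once one takes $\log$ after averaging.

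\textbf{(2) Where the $\log N$ gain comes from, and the size of the averaging set.} You claim that the coarse graining should push the sum-of-squares/influence bound itself down from $\beta^2 N$ to $CN/\log N$, after which a direct Herbst argument finishes. That is not what happens, even in the paper: the analogue of Proposition \ref{Poinc} still produces a bound of order $N$ (not $N/\log N$). The logarithmic improvement appears only through Theorem \ref{main concentration} (Bena\"im--Rossignol), whose concentration scale is $\sqrt{l(K)}$ with $l(K)=K/\log\!\bigl(K/(\rho_N\sigma_N\log(K/\rho_N\sigma_N))\bigr)$; with $K\asymp N$, one needs the product $\rho_N\sigma_N$ (the paper's $r_N s_N$) to be a \emph{polynomial} factor below $N$ to convert $K\asymp N$ into $l(K)\asymp N/\log N$. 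Because $s_N\asymp N$ is essentially forced, this means $r_N$ must be of order $N^{-\tau}$ for some $\tau>0$, hence $|I|$ must be polynomially large in $N$. Your choice $m\asymp\log N$ only yields $r_N\asymp(\log N)^{-1/4}$, so $\log(N/(r_Ns_N))\asymp\log\log N$ and you would at best land on the scale $\sqrt{N/\log\log N}$. Relatedly, the error in step (i) of your plan is not $O(1)$: with the paper's $|I|\asymp N^{\alpha d}$, one has to transport the polymer through roughly $N^\alpha$ steps to link $\log Z_{N,\omega}$ to $\overline F^{I_\pm^\alpha}_{N,\omega}$, producing correction terms $M^\pm_{N,\omega}$ of order $N^\alpha\cdot\mathrm{polylog}(N)$; these are controlled (and are $o(\sqrt{N/\log N})$ for $\alpha<1/2$) via the exponential moment, but the bookkeeping is a genuine part of the argument, not an $O(1)$ afterthought. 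Finally, one more ingredient you do not mention: the modified Poincar\'e inequality attaches a weight $\psi(\omega_{m,y})$ satisfying $\psi^2\le B+A|\omega|$, and the $A|\omega|$ part gives rise to a $\max$-of-disorder term in the Poincar\'e bound, which is precisely where the hypothesis $\int e^{4\beta|\omega|}\,\nu(d\omega)<\infty$ is used.
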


The nearly gamma condition ensures that $\nu$ has some exponential moment (see Lemma \ref{neargmoment}), so for small $\beta$ the exponential moment hypothesis in Theorem \ref{expconc} is redundant.
The proof follows the rough outline of \cite{BR08}, and uses some results from there, which we summarize in Section \ref{review conc}. 
 

We use Theorem \ref{expconc}, in combination with coarse graining techniques motivated by \cite{Al11}, to provide subgaussian estimates of the rate of convergence of $N^{-1}\mathbb{E}\log Z_{N,\omega}$ to the free energy. 
Here the \emph{free energy} of the polymer (also called the \emph{pressure}) is defined as
\begin{equation} \label{aslimit}
p(\beta)=  \lim_{N\to\infty} \frac{1}{N}\log Z_{N,\omega}  \quad \mathbb{P}-\text{a.s.}
\end{equation}
The existence of the free energy is obtained by standard subadditivity arguments and concentration results \cite{CSY03}, which furthermore guarantee that 
\begin{eqnarray} \label{pbeta}
p(\beta)&=& \lim_{N\to\infty} \frac{1}{N}\E\log Z_{N,\omega}\\
&=&\sup_N \frac{1}{N}\E\log Z_{N,\omega}.
\end{eqnarray}
Specifically, our second main result is as follows.

\begin{theorem}\label{ratemain}
  Under the same assumptions as in Theorem \ref{expconc}, there exists $K_2$ such that for all $N\geq 3$,
  \begin{equation} \label{rate2}
  Np(\beta) \geq \E\log Z_{N,\omega} \geq Np(\beta) - K_2N^{1/2}\frac{\log \log N}{(\log N)^{1/2}}.
  \end{equation}
  \end{theorem}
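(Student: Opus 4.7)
The upper bound $Np(\beta) \geq \E \log Z_{N,\omega}$ is immediate from the supremum characterisation of $p(\beta)$ given in \eqref{pbeta}. For the lower bound, my plan is to use the subgaussian concentration of Theorem~\ref{expconc} in a coarse-graining argument in the spirit of \cite{Al11}. The strategy is to compare $\E\log Z_{N,\omega}$ with $\E\log Z_{N',\omega}/N'$ for $N' \gg N$, exploiting that $\E\log Z_{N',\omega}/N' \to p(\beta)$ as $N' \to \infty$.

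Fix $N$ and take $N' = NL$ with $L$ large. By the Markov property of the random walk,
\begin{equation*}
Z_{N',\omega} = \sum_{y_1,\dots,y_L}\prod_{i=1}^{L}\tilde{Z}_N^{y_{i-1}\to y_i}(\theta^{(i)}\omega),\qquad y_0 = 0,
\end{equation*}
where $\tilde{Z}_N^{y\to z}$ is the point-to-point partition function and $\theta^{(i)}\omega$ denotes the (independent) environment on the $i$-th time block. Restricting each spatial sum to a ball $B_R$ around the corresponding starting point (with the large-displacement tails of the simple random walk absorbed separately) and applying the elementary inequality $\log\sum a_i \le \log(\text{number of terms}) + \max_i \log a_i$ yields
\begin{equation*}
\log Z_{N',\omega} \le L\log|B_R| + \max_{\{y_i\}\subset B_R} \sum_{i=1}^L \log \tilde{Z}_N^{y_{i-1}\to y_i}(\theta^{(i)}\omega) + (\text{tail}).
\end{equation*}
A union bound over the $|B_R|^L$ configurations, combined with Theorem~\ref{expconc} applied to each $\log\tilde{Z}_N^{y_{i-1}\to y_i}$ and the elementary bound $\log \tilde{Z}_N^{y\to z} \le \log Z_{N,\omega}$, gives (after taking expectations)
\begin{equation*}
\E\log Z_{N',\omega} \le L\,\E\log Z_{N,\omega} + C\, L\sqrt{N/\log N}\,\log\log N.
\end{equation*}
Dividing by $N' = NL$ and letting $L \to \infty$ yields $p(\beta) \le N^{-1}\E\log Z_{N,\omega} + C\log\log N/\sqrt{N\log N}$; multiplying by $N$ gives the claim.

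The main obstacle is establishing the third display above with the indicated $\log\log N$-type error. A naive single-scale implementation with $R \sim \sqrt{N\log N}$ would produce an error of order $L\sqrt{N\log N}$, leading only to the weaker rate $\sqrt{\log N/N}$. The sharper $\log\log N/\sqrt{N\log N}$ rate should come from a multi-scale (dyadic) refinement of the truncation, combined with the subexponential tail $K_0 e^{-K_1 t}$ of Theorem~\ref{expconc}, so that the effective entropic cost per block is reduced from $\log N$ to $\log\log N$. The delicate balance between the entropy of the spatial decomposition and the concentration scale $\sqrt{N/\log N}$ is what drives the extra $\log\log N$ factor.
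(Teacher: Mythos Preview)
Your outline has the right skeleton and matches the paper's high-level architecture: write $N'=kn$, decompose $Z_{kn,\omega}$ over block skeletons, bound each $\log \tilde Z_n^{y_{i-1}\to y_i}$ above by a (shifted) point-to-line quantity so that Theorem~\ref{expconc} applies, union bound over skeletons, and let $k\to\infty$. You also correctly diagnose the obstruction: a single-scale union bound over $|B_R|^L$ endpoints pays $\log|B_R|\asymp\log n$ per block and yields only $\E\log Z_{n,\omega}\ge np(\beta)-O(\sqrt{n\log n})$, which is exactly the paper's preliminary Lemma~\ref{rateweak}.

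The gap is that you stop precisely where the real work begins. The sentence ``a multi-scale (dyadic) refinement of the truncation \ldots should reduce the entropic cost from $\log N$ to $\log\log N$'' is not a proof, and it is not clear any dyadic grouping of endpoints can do this on its own. The difficulty is that replacing a skeleton point $y_i$ by a nearby coarse-grained point $y_i'$ changes the block partition function by an amount you must control uniformly, and this is where all the content lies. In the paper, the entropy reduction comes from coarse-graining the skeleton on a mesh $u_n=h_n/\varphi(n)$ with $\varphi(n)=(\log n)^3$, where $h_n$ is the largest ``adequate'' transverse increment. Adequate blocks then have only $(O(\varphi(n)))^{2d}$ CG choices, which is the $\log\log n$ entropy. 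But making this work requires three substantial ingredients you have not supplied: (a) a proof that the CG replacement changes $\log Z$ by at most $O(kn^{1/2}\rho(n))$ uniformly over skeletons (Lemma~\ref{CGcorrec}), which in turn needs the existence of short ``efficient'' connecting increments (Lemmas~\ref{fastpi} and~\ref{existence}, proved via a separate climbing-skeleton argument); (b) a proof that ``bad'' blocks---those with inadequate increments or large sidesteps---carry a deterministic cost $\gtrsim n^{1/2}\theta(n)$ in $\E\log Z$ (Lemma~\ref{badcost}), so that their larger entropy is offset; and (c) a separation of the union bound into CG skeletons with few versus many bad blocks (Lemmas~\ref{fewbad} and~\ref{manybad2}), using Lemma~\ref{sums}(iii) on good blocks and the coarser Lemma~\ref{sums}(ii) on bad blocks. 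Your ``(tail)'' term also cannot be dismissed as a simple-random-walk large deviation: under the polymer measure, atypical displacements are not a priori rare, and showing they are expensive is precisely item~(b).

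In short, what you have written is a correct proof of the weaker $O(\sqrt{N\log N})$ bound plus an acknowledgement that more is needed; the passage to $O(\sqrt{N/\log N}\,\log\log N)$ requires the full coarse-graining machinery of Section~\ref{proofrates} and the construction of Section~\ref{prooflem}, none of which is present in your proposal.
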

  
Controlling the speed of convergence of the mean is useful when one considers deviations of $N^{-1}\log Z_{N,\omega}$ from its limit $p(\beta)$ instead of from its mean, analogously to \cite{Chat1}.

Regarding the organization of the paper, in Section \ref{review conc}
we review certain concentration inequalities and related results, mostly from \cite{BR08}, and give an extension of the definition from \cite{BR08} of a nearly gamma distribution so as to allow non-positive variables. In Section \ref{proofcon} we provide the proof of Theorem \ref{expconc}. In Section \ref{proofrates} we provide the proof of Theorem \ref{ratemain}.  Finally, in Section \ref{prooflem} we provide the proof of a technical lemma used in Section \ref{proofrates}.

\section{Preliminary Results on Concentration and Nearly Gamma Distributions.}\label{review conc}

Let us first define the class of nearly gamma distributions. This class, introduced in \cite{BR08} is quite wide and in particular it includes the cases of Gamma and normal variables. The definition given in \cite{BR08} required that the support does not include negative values. Here we will extend this definition in order to accommodate such values as well.

\begin{definition}\label{nearly gamma}
Let $\nu$ be a probability measure on $\mathbb{R}$, absolutely continuous with respect to the Lebesque measure, with density $h$ and cumulative distribution function $H$. Let also $\Phi$ be the cumulative distribution function of the standard normal. $\nu$ is said to be {\it nearly gamma} (with parameters $A,B$) if
\begin{itemize}
\item[(i)] The support $I$ of $\nu$ is an interval.\\
\item[(ii)] $h(\cdot)$ is continuous on $I$.\\
\item[(iii)] For every $y\in I$ we have
\begin{eqnarray} \label{psidef}
  \psi(y) := \frac{\Phi'\circ\Phi^{-1} (H(y))}{h(y)}\leq \sqrt{B+A|y|},
\end{eqnarray}
where $A,B$ are nonnegative constants.
\end{itemize}
\end{definition}

The motivation for this definition (see \cite{BR08}) is that $H^{-1}\circ\Phi$ maps a gaussian variable to one with distribution $\nu$, and $\psi(y)$ is the derivative of this map, evaluated at the inverse image of $y$.  With the bound on $\psi$ in (iii), the log Sobolev inequality satisfied by a gaussian distribution with respect to the differentiation operator translates into a useful log Sobolev inequality satisfied by the distribution $\nu$  with respect to the operator $\psi(y)d/dy$.

It was established in \cite{BR08} that a distribution is nearly gamma if (i), (ii) of Definition \ref{nearly gamma} are valid, and 
(iii) is replaced by
\begin{itemize}
\item[(iv)]
 if $ I=[\nu_-,\nu_+]$ with $ |\nu_{\pm}|<\infty$, then 
\begin{eqnarray*}
\frac{h(x)}{|x-\nu_\pm|^{\alpha_{\pm}}},
\end{eqnarray*}
remains bounded away from zero and infinity for $x\sim \nu_{\pm}$, for some $\alpha_\pm>-1$.
\item[(v)]
If $\nu_+=+\infty$ then
\begin{eqnarray*}
\frac{\int_{x}^\infty h(t) dt}{h(x)}
\end{eqnarray*}
remains bounded away from zero and infinity, as $x\to+\infty$. The analogous statement is valid if $\nu_-=-\infty$.
\end{itemize}

The nearly gamma property ensures the existence of an exponential moment, as follows.  

\begin{lemma}\label{neargmoment}
Suppose the distribution $\nu$ is nearly gamma with parameters $A,B$.  Then $\int e^{tx}\ \nu(dx) < \infty$ for all $t<2/A$.
\end{lemma}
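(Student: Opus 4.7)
The plan is to represent a $\nu$-distributed variable $X$ as a deterministic function of a standard Gaussian, convert the bound in (iii) into a differential inequality for this function, and then reduce the exponential-moment computation to a Gaussian one.

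First I would set $f := H^{-1}\circ \Phi$, so that if $G\sim N(0,1)$ then $f(G)\sim \nu$, and $\int e^{tx}\,\nu(dx)=\E[e^{tf(G)}]$. Using the chain rule and the identity $\Phi(g)=H(f(g))$, one computes $f'(g) = \Phi'(g)/h(f(g)) = \psi(f(g))$. Condition (iii) then gives, for every $g$ in the interior of the relevant interval,
\begin{equation*}
  f'(g) \;\leq\; \sqrt{B + A|f(g)|}.
\end{equation*}

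Next I would integrate this inequality to get an explicit growth bound on $|f|$. The natural antiderivative is
\begin{equation*}
  \phi(y) := \int_0^y \frac{du}{\sqrt{B+A|u|}} \;=\; \frac{2}{A}\bigl(\sqrt{B+A|y|}-\sqrt{B}\bigr)\,\mathrm{sgn}(y),
\end{equation*}
and the chain rule gives $0\le(\phi\circ f)'(g)=f'(g)/\sqrt{B+A|f(g)|}\le 1$. Integrating between $0$ and $g$ yields $|\phi(f(g))-\phi(f(0))|\le|g|$; substituting the explicit form of $\phi$ and squaring produces a bound of the shape
\begin{equation*}
  |f(g)| \;\leq\; \tfrac{A}{4}g^2 + C_1|g| + C_2 \qquad \text{for all } g\in\mathbb{R},
\end{equation*}
with constants $C_1,C_2$ depending only on $A,B,f(0)$.

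Finally, I would plug this into the Gaussian expectation: for any $t\in\mathbb{R}$,
\begin{equation*}
  \int e^{tx}\,\nu(dx) = \E\bigl[e^{tf(G)}\bigr] \;\leq\; e^{|t|C_2}\,\E\bigl[e^{|t|C_1|G|}\,e^{(|t|A/4)G^2}\bigr],
\end{equation*}
which is finite iff $|t|A/4 < 1/2$, i.e.\ $|t| < 2/A$; this covers the stated range $t<2/A$ (trivially so for $t\le 0$ when the support is bounded below). The only real subtlety is handling the integration of the differential inequality near endpoints of $I$ where $h$ may vanish — here one verifies that $\phi\circ f$ extends continuously to the closure so that the inequality $|\phi(f(g))-\phi(f(0))|\le|g|$ holds on all of $\mathbb{R}$; this is the main (but mild) obstacle and follows from continuity of $h$ on $I$ together with standard regularity of the inverse-CDF transformation.
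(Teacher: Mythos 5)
Your proposal is correct and follows essentially the same route as the paper: both set $T=H^{-1}\circ\Phi$ so that $T(\xi)\sim\nu$ for standard normal $\xi$, rewrite condition (iii) as $T'(x)\le\sqrt{B+A|T(x)|}$, and integrate this to obtain the quadratic growth bound $|T(x)|\le\frac{A}{4}x^2+C|x|+C'$, from which the exponential moment is controlled by a Gaussian integral that converges exactly when $|t|A/4<1/2$. The only cosmetic difference is that the paper bounds $\bigl|\tfrac{d}{dx}\sqrt{B+A|T(x)|}\bigr|\le A/2$ directly while you integrate the equivalent bound $0\le(\phi\circ T)'\le1$; these are the same computation.
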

\begin{proof}
Let $T=H^{-1}\circ\Phi$, so that $T(\xi)$ has distribution $\nu$ for standard normal $\xi$; then \eqref{psidef} is equivalent to 
\[
  T'(x) \leq \sqrt{B+A|T(x)|} \quad \text{for all } x \in \RR.
\]
Considering $T(x)\geq 0$ and $T(x)<0$ separately, it follows readily from this that 
\[
  \left| \frac{d}{dx} \sqrt{(B+A|T(x)|)} \right| \leq \frac{A}{2} \quad\text{for all $x$ with } T(x)\neq 0,
\]
so for some constant $C$ we have $\sqrt{(B+A|T(x)|)} \leq C+A|x|/2$, or 
\[
  |T(x)| \leq \frac{C^2-B}{A} + C|x| + \frac{A}{4}x^2,
\]
and the lemma follows.
\end{proof}

For $\omega \in \RR^{\ZZ^{d+1}}$ and $(m,y) \in \ZZ^{d+1}$ we define $\hw^{(m,y)} \in \RR^{\ZZ^{d+1}\bks \{(m,y)\}}$ by the relation $\omega = (\hw^{(m,y)},\omega_{m,y})$.  In other words, $\hw^{(m,y)}$ is $\omega$ with the coordinate $\omega^{(m,y)}$ removed.  Given a function $F$ on $\RR^{\ZZ^{d+1}}$ and a configuration $\omega$, the average sensitivity of $F$ to changes in the $(m,y)$ coordinate is given by
\[
  Y^{(m,y)}(\omega) := \int \left| F(\hw^{(m,y)},\tw_{m,y}) - F(\omega) \right|\ d\mathbb{P}(\tw_{m,y}).
\]
We define
\[
  Y_N(\omega) := \sum_{(m,y) \in \{1,\dots,N\}\times\ZZ^d} Y^{(m,y)}(\omega),
\]
\[
  \rho_N := \sup_{(m,y) \in \{1,\dots,N\}\times\ZZ^d} \sqrt{ \mathbb{E}\left[ (Y^{(m,y)})^2 \right]},
\]
\[
  \sigma_N := \sqrt{ \mathbb{E}\left( Y_N^2\right) }.
\]
We use the same notation (a mild abuse) when $F$ depends on only a subset of the coordinates.

We are now ready to state the theorem of Benaim and Rossignol \cite{BR08}, specialized to the operator $\psi(s)d/ds$ applied to functions $e^{\frac{\theta}{2}F(\hw_{m,y},\cdot)}$.

\begin{theorem}\label{main concentration}
Let  $F\in L^2(\nu^{\{1,\dots,N\}\times\ZZ^d})$ and let $\rho_N,\sigma_N$ be as above.  Suppose that there exists $K>e\rho_N\sigma_N$ such that
\begin{eqnarray} \label{assumption Poincare}
\sum_{(m,y) \in \{1,\dots,N\}\times\ZZ^d} \mathbb{E} \left[ \left( \psi(\omega_{m,y}) \frac{\partial}{\partial\omega_{m,y}}
  e^{\frac{\theta}{2}F}\right)^2\right] 
\leq
K\theta^2 \mathbb{E}\left[e^{\theta F}\right]
\end{eqnarray}
for all $|\theta|<\frac{1}{2\sqrt{l(K)}}$ where 
\begin{eqnarray*}
l(K)=\frac{K}{\log \frac{K}{\rho_N\sigma_N \log \frac{K}{\rho_N\sigma_N } }}.
\end{eqnarray*}
Then for every $t>0$ we have that
\begin{eqnarray*}
\mu\left( |F-\mathbb{E}[F] |\geq t\sqrt{l(K)} \right) \leq 8e^{-t}.
\end{eqnarray*}
\end{theorem}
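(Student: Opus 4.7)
The plan is to follow the Herbst entropy method, adapted to the product measure $\nu^{\otimes \{1,\dots,N\}\times\ZZd}$ via the nearly gamma structure of $\nu$, and to combine it with a truncation step that sharpens the tail from Gaussian on a small scale to exponential on the scale $\sqrt{l(K)}$.

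First I would establish a modified log-Sobolev inequality for $\nu$. Since $T := H^{-1}\circ\Phi$ maps a standard Gaussian variable to one with distribution $\nu$, one checks directly from \eqref{psidef} that $T'(x) = \psi(T(x))$, so the substitution $f = g\circ T$ in the Gaussian log-Sobolev inequality $\mathrm{Ent}_\gamma[f^2]\leq 2\int(f')^2\,d\gamma$ yields
\begin{equation*}
   \mathrm{Ent}_\nu\bigl[g^2\bigr] \leq 2\int \psi(y)^2\, g'(y)^2\,d\nu(y),
\end{equation*}
and tensorization extends this to the product measure on coordinates $\{1,\dots,N\}\times\ZZd$. Applying the tensorized inequality with $g = e^{\theta F/2}$, the integrand becomes precisely the one bounded by hypothesis \eqref{assumption Poincare}, producing the entropy estimate $\mathrm{Ent}[e^{\theta F}]\leq 2K\theta^2\,\EE[e^{\theta F}]$ on the range $|\theta|<1/(2\sqrt{l(K)})$. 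Setting $u(\theta):=\log\EE[e^{\theta(F-\EE F)}]$, a standard manipulation rewrites this as $\frac{d}{d\theta}(u(\theta)/\theta)\leq 2K$, and integration from $0$ yields $u(\theta)\leq 2K\theta^2$ throughout the permitted range. A Chernoff optimization of $e^{-\theta s}\EE[e^{\theta(F-\EE F)}]$ then gives subgaussian decay for small $s$ and exponential decay on scale $\sqrt{l(K)}$ for $s$ beyond $\sim K/\sqrt{l(K)}$; after setting $s = t\sqrt{l(K)}$, both regimes absorb into a bound of the form $8e^{-t}$.

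The main obstacle is producing the specific form of $l(K)$, with its iterated logarithms involving $\rho_N$ and $\sigma_N$. These quantities do not appear in \eqref{assumption Poincare} and must be brought in through a truncation argument: I would replace $F$ by a version $F^*$ obtained by modifying $F$ on the event that $Y_N$ or the individual increments $Y^{(m,y)}$ exceed a threshold $M$ calibrated in terms of $\rho_N$, $\sigma_N$, and $K$. The entropy method applied to $F^*$ would give concentration with the prescribed Sobolev constant, while $\EE[(F-F^*)^2]$ would be controlled by Markov/Chebyshev using the bounds $\EE[Y_N^2]=\sigma_N^2$ and $\EE[(Y^{(m,y)})^2]\leq \rho_N^2$. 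Optimizing $M$ balances the truncation error against the tail obtained from Herbst, which is where the doubly-logarithmic structure of $l(K)$ emerges; the condition $K>e\rho_N\sigma_N$ arises naturally as the regime where this optimization is nontrivial.
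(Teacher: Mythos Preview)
Your entropy/Herbst chain in the second paragraph only delivers $u(\theta)\leq 2K\theta^2$ on the range $|\theta|<1/(2\sqrt{l(K)})$, and Chernoff with that restricted range yields concentration on scale $\sqrt{K}$ (plus an unbounded constant $e^{K/(2l(K))}$ when you push $\theta$ to the endpoint), not on scale $\sqrt{l(K)}$. The whole point of the theorem is the logarithmic gain from $K$ to $l(K)$, and nothing in your argument so far produces it. You recognize this, but the proposed fix---truncating $F$ on an event where $Y_N$ or $Y^{(m,y)}$ is large---is not the mechanism and I do not see how to make it work: truncation controls an error term, it does not sharpen the concentration scale of the main term.

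The actual route, as in Bena\"im--Rossignol, is a \emph{modified Poincar\'e inequality} of Falik--Samorodnitsky type,
\[
  \mathrm{Var}_\mu(f)\,\log\frac{\mathrm{Var}_\mu(f)}{\sum_n \|\Delta_n f\|_{L^1(\mu)}^2}
  \leq \sum_n \EE_\mu\bigl[(R_nf)^2\bigr],
\]
where $\Delta_n f = f - \int f\,d\mu_n$ and $R_n = \psi(\omega_n)\partial_{\omega_n}$; this is derived from the coordinatewise log-Sobolev inequality you wrote, combined with the orthogonal martingale decomposition and the inequality $\mathrm{Ent}(g^2)\geq \|g\|_2^2\log(\|g\|_2^2/\|g\|_1^2)$. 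Applied with $f=e^{\theta F/2}$, the right side is bounded by $K\theta^2\EE[e^{\theta F}]$ by hypothesis, while the sum in the denominator of the logarithm is controlled by $\rho_N\sigma_N$ times a factor comparable to $\EE[e^{\theta F}]$ (via Cauchy--Schwarz on $\sum_n\|\Delta_n f\|_{L^1}^2\leq(\sup_n\|\Delta_n f\|_{L^1})\cdot\sum_n\|\Delta_n f\|_{L^1}$ and the mean-value bound $|\Delta_n e^{\theta F/2}|\lesssim |\theta| Y^{(n)} e^{\theta F/2}$). This converts the inequality into $\mathrm{Var}(e^{\theta F/2})\leq l(K)\theta^2\EE[e^{\theta F}]$ for $|\theta|<1/(2\sqrt{l(K)})$, and then a standard lemma of Ledoux (variance-to-tail, not entropy-to-tail) yields the stated bound with scale $\sqrt{l(K)}$ and constant $8$. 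So $\rho_N$ and $\sigma_N$ enter through the $L^1$ norms of the martingale differences in the Poincar\'e denominator, not through any truncation of $F$.
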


Observe that if $K$ is of order $N$, then a bound on $\rho_N\sigma_N$ of order $N^\alpha$ with $\alpha<1$ is sufficient to ensure that $l(K)$ is of order $N/\log N$.  In particular it is sufficient to have $\sigma_N$ of order $N$ and $\rho_N$ of order $N^{-\tau}$ with $\tau>0$, which is what we will use below.

\section{Concentration for the Directed Polymer.}\label{proofcon}

In this section we will establish the first main result of the paper, which is Theorem \ref{expconc}. We assume throughout that the distribution $\nu$ of the disorder is nearly gamma with parameters $A,B$. We finally denote $\mathbb{P}=\nu^{\mathbb{Z}^{d+1}}$.  We write $\mu(f)$ for the integral of a function $f$ with respect to a measure $\mu$.

Let $(n,x)\in \mathbb{N}\times \mathbb{Z}^d$.  We denote the partition function of the directed polymer of length $N$ in the shifted environment $\omega_{n+\cdot,x+\cdot}$ by
\begin{eqnarray}\label{ZN}
Z_{N,\omega}^{(n,x)}:=E\left[ e^{\beta\sum_{i=1}^N\omega_{n+i,x+x_i}} \right],
\end{eqnarray}
and let $\mu_{N,\omega}^{(n,x)}$ be the corresponding Gibbs measure. For $I\subset \mathbb{N}\times \mathbb{Z}^d$ we define
\begin{eqnarray*}
\overline{F}_{N,\omega}^I:=\frac{1}{| I |}\sum_{(n,x)\in I} \log Z_{N,\omega}^{(n,x)}.
\end{eqnarray*}
Define the set of paths from the origin
\[
  \Gamma_N = \{ \{(i,x_i)\}_{i\leq N}:  x_0=0,|x_i-x_{i-1}|_1=1 \text{ for all } i\};
\]
we write $\gamma_N=\{(i,x_i)\colon i=0,\dots,N\}$ for a generic or random polymer path in $\Gamma_N$.  Let
\begin{eqnarray}\label{absmax}
\mathcal{M}_{N,\omega}=\max_{\gamma_N} \sum_{(m,y)\in\gamma_N}|\omega_{m,y}|,
\end{eqnarray}
and let $\mathcal{M}^{(n,x)}_{N,\omega}$ denote the same quantity for the shifted disorder, analogously to \eqref{ZN}.

\begin{proposition} \label{Poinc}
There exists $\theta_0(\beta,\nu)$ such that for all $|\theta|<\theta_0$ and $|I|\leq (2d)^N$, the function $\overline{F}_{N,\omega}^I$ satisfies the following Poincar\'e type inequality:
\begin{eqnarray*}
\sum_{(m,y)\in \mathbb{N}\times\mathbb{Z}^d} \mathbb{E}\left[\left( \psi(\omega_{m,y})\frac{\partial }{\partial \omega_{m,y}} e^{\frac{\theta}{2} \overline{F}_{N,\omega}^I } \right)^2\right]
\leq 
C_{AB} \theta^2\beta^2 N\,\,\mathbb{E}\left[ e^{\theta\overline{F}_{N,\omega}^I} \right],
\end{eqnarray*}
where $C_{AB}$ is a constant depending on the nearly gamma parameters $A,B$.
\end{proposition}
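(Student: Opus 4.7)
The plan is to compute the partial derivatives $\partial_{m,y}\log Z^{(n,x)}_{N,\omega}$ explicitly via the polymer Gibbs measure, use the nearly-gamma bound $\psi^2\le B+A|\omega|$ to split the weighted Dirichlet-type sum into a deterministic ``$B$-part'' of size $BN$ and an ``$A$-part'' controlled via the path-maximum $\overline{\mathcal{M}}_{N,\omega}$, and then close the inequality by an exponential-moment estimate for $\overline{\mathcal{M}}_{N,\omega}$ against $e^{\theta\overline{F}_{N,\omega}^I}$.

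Differentiating \eqref{ZN} yields $\partial_{m,y}\log Z^{(n,x)}_{N,\omega}=\beta\mu^{(n,x)}_{N,\omega}(x_{m-n}=y-x)\mathbf{1}_{\{1\le m-n\le N\}}$, so $|\partial_{m,y}\overline{F}_{N,\omega}^I|\le\beta$ and $\sum_{(m,y)}|\partial_{m,y}\overline{F}_{N,\omega}^I|\le\beta N$ (since $\mu^{(n,x)}_{N,\omega}$ is a probability measure and only $N$ time-slices contribute). Expanding $\bigl(\psi\,\partial_{m,y}e^{\frac{\theta}{2}\overline{F}}\bigr)^2=\tfrac{\theta^2}{4}\psi^2(\partial_{m,y}\overline{F})^2 e^{\theta\overline{F}}$, applying $\psi^2\le B+A|\omega_{m,y}|$ together with $(\partial_{m,y}\overline{F})^2\le\beta|\partial_{m,y}\overline{F}|$, and using the polymer-measure identity
\[
  \sum_{(m,y)}|\omega_{m,y}|\,|\partial_{m,y}\overline{F}_{N,\omega}^I|\le \tfrac{\beta}{|I|}\sum_{(n,x)\in I}E_{\mu^{(n,x)}_{N,\omega}}\!\Bigl[\sum_{i=1}^N|\omega_{n+i,x+x_i}|\Bigr]\le\beta\,\overline{\mathcal{M}}_{N,\omega},
\]
where $\overline{\mathcal{M}}_{N,\omega}=|I|^{-1}\sum_{(n,x)\in I}\mathcal{M}^{(n,x)}_{N,\omega}$, the left-hand side of the claimed inequality is bounded by $\tfrac{\theta^2\beta^2}{4}\,\mathbb{E}\bigl[(BN+A\,\overline{\mathcal{M}}_{N,\omega})e^{\theta\overline{F}_{N,\omega}^I}\bigr]$.

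The proposition thereby reduces to the bound $\mathbb{E}[\overline{\mathcal{M}}_{N,\omega}\,e^{\theta\overline{F}_{N,\omega}^I}]\le C N\,\mathbb{E}[e^{\theta\overline{F}_{N,\omega}^I}]$ for $|\theta|<\theta_0$, which is the main obstacle. The ingredients for it are: (i) the deterministic sandwich $\beta\mathcal{M}^{(n,x)}_{\max}-N\log(2d)\le\log Z^{(n,x)}_{N,\omega}\le\beta\mathcal{M}^{(n,x)}_{\max}$ (where $\mathcal{M}^{(n,x)}_{\max}=\max_\gamma\sum_i\omega_{n+i,x+x_i}$), obtained by lower-bounding $Z^{(n,x)}_{N,\omega}$ by $(2d)^{-N}$ times its heaviest path and upper-bounding it by that path alone, which gives $|\overline{F}_{N,\omega}^I|\le\beta\overline{\mathcal{M}}_{N,\omega}+N\log(2d)$; (ii) the exponential-moment bound $\mathbb{E}[e^{s\mathcal{M}^{(n,x)}_{N,\omega}}]\le\bigl(2d\int e^{s|\omega|}\nu(d\omega)\bigr)^N$, obtained by summing $e^{s\sum_i|\omega_{n+i,x+x_i}|}$ over the $(2d)^N$ SRW paths and finite for $s<2/A$ by Lemma \ref{neargmoment}; and (iii) the Jensen lower bound $\mathbb{E}[e^{\theta\overline{F}_{N,\omega}^I}]\ge e^{\theta\mathbb{E}[\overline{F}_{N,\omega}^I]}\ge e^{-C_0|\theta|N}$. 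Plugging these into the Chernoff estimate $\mathbb{P}_\theta(\overline{\mathcal{M}}_{N,\omega}>t)\le e^{-st}\mathbb{E}[e^{\theta\overline{F}+s\overline{\mathcal{M}}}]/\mathbb{E}[e^{\theta\overline{F}}]$ for $\mathbb{P}_\theta=e^{\theta\overline{F}_{N,\omega}^I}\mathbb{P}/\mathbb{E}[e^{\theta\overline{F}_{N,\omega}^I}]$ gives $\mathbb{P}_\theta(\overline{\mathcal{M}}_{N,\omega}>t)\le e^{-st+g(\theta,s)N}$ with $g(\theta,s)$ bounded for $|\theta|<\theta_0$ and $s$ in a small neighborhood of a fixed $s_0<2/A-\theta_0\beta$. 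Integrating this tail bound in $t$ produces $\mathbb{E}_\theta[\overline{\mathcal{M}}_{N,\omega}]\le CN$, which is the required moment bound. Choosing $\theta_0$ small enough (in terms of $\beta$, $A$, and the exponential moments of $\nu$) finishes the proof with $C_{AB}$ depending only on $A,B,\beta,\nu$.
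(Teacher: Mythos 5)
Your proposal is correct and shares the same skeleton as the paper's proof: compute $\partial_{m,y}\overline{F}^I_{N,\omega}=\frac{\beta}{|I|}\sum_{(n,x)\in I}\mu^{(n,x)}_{N,\omega}(1_{(m-n,y-x)\in\gamma_N})$ explicitly, split $\psi^2\leq B+A|\omega|$, bound the $B$-part directly by $\tfrac{1}{4}\theta^2\beta^2 BN\,\E[e^{\theta\overline{F}}]$ via the ``total mass $N$'' identity and $\mu(\cdot)^2\le\mu(\cdot)$, and reduce the $A$-part to controlling $\E[\mathcal{M}\,e^{\theta\overline{F}}]$ against $\E[e^{\theta\overline{F}}]$, with Jensen providing a lower bound $\E[e^{\theta\overline{F}}]\geq e^{-C_0|\theta|N}$ on the denominator. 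You diverge from the paper at the crucial last step. The paper works with the \emph{maximum} $\mathcal{M}^I_{N,\omega}=\max_{(n,x)\in I}\mathcal{M}^{(n,x)}_{N,\omega}$, truncates at a level $bN$, and controls the excess $\E[\mathcal{M}^I e^{|\theta|\beta\mathcal{M}^I};\mathcal{M}^I>bN]$ by a union bound over the $(2d)^N|I|$ paths and indices in $I$, invoking the large-deviation rate function $\mathcal{J}$ of $|\omega|$. You instead work with the \emph{average} $\overline{\mathcal{M}}_{N,\omega}=|I|^{-1}\sum_{(n,x)\in I}\mathcal{M}^{(n,x)}_{N,\omega}$, which is dominated by the max (so you immediately avoid the $|I|$ in the union bound and need only the $(2d)^N$ path count), and you reformulate the needed inequality as a moment bound $\E_\theta[\overline{\mathcal{M}}]\le CN$ under the tilted measure $\P_\theta$, derived from a Chernoff-type tail estimate using the deterministic sandwich on $\overline{F}^I$ and the path-counting bound $\E[e^{s\mathcal{M}^{(n,x)}}]\le(2d\int e^{s|\omega|}\nu(d\omega))^N$. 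Both arguments use the same structural inputs (path count, exponential moment of $|\omega|$, Jensen); yours packages them through a change of measure instead of an explicit rate function, which is a bit cleaner. One point worth spelling out: to pass from $\P_\theta(\overline{\mathcal{M}}>t)\le e^{-st+g(\theta,s)N}$ to $\E_\theta[\overline{\mathcal{M}}]\le CN$, you must also use the trivial bound $\P_\theta\le 1$ for $t\le g(\theta,s_0)N/s_0$ and integrate the exponential tail only beyond that crossover; as written, ``integrating the tail bound in $t$'' would naively produce a factor $e^{gN}$, and the crossover argument is what kills it.
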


\begin{proof}
By the definition of nearly gamma we have that
\begin{align} \label{ABsub}
\sum_{(m,y)\in \mathbb{N}\times\mathbb{Z}^d} &\mathbb{E}\left[\left( \psi(\omega_{m,y})\frac{\partial }{\partial \omega_{m,y}} e^{\frac{\theta}{2} \overline{F}_{N,\omega}^I } \right)^2\right] \notag \\
&\leq 
B\mathbb{E}\left[ \left(\frac{\partial}{\partial\omega_{m,y}} e^{\frac{\theta}{2} \overline{F}_{N,\omega}^I }  \right)^2 \right] +
A \mathbb{E}\left[ |\omega_{m,y}| \left(\frac{\partial}{\partial\omega_{m,y}} e^{\frac{\theta}{2} \overline{F}_{N,\omega}^I }  \right)^2 \right].
\end{align}
Regarding the first term on the right side of \eqref{ABsub}, we have
\begin{eqnarray*}
\frac{\partial \overline{F}^I_{N,\omega}}{\partial \omega_{m,y}}=
 \frac{\beta}{| I |}\sum_{(n,x)\in I} \mu_{N,\omega}^{n,x}(1_{(m-n,y-x)\in \gamma_N})
\end{eqnarray*}
and
\begin{align}\label{Poincare estimate}
\sum_{(m,y)\in \mathbb{N}\times\mathbb{Z}^d} &\mathbb{E}\left[\left( \frac{\partial}{\partial\omega_{m,y}}e^{\frac{\theta}{2}\overline{F}_{N,\omega}^I} \right)^2\right]\nonumber\\
&=\frac{1}{4}\theta^2
\sum_{(m,y)\in \mathbb{N}\times\mathbb{Z}^d} \mathbb{E}\left[\left( \frac{\partial \overline{F}_{N,\omega}^I }{\partial\omega_{m,y}} 
                                                                           \right)^2
                                                                    e^{\theta\overline{F}_{N,\omega}^I}
                                                                    \right]\nonumber\\
&=  \frac{1}{4}\theta^2 \beta^2 \sum_{(m,y)\in \mathbb{N}\times\mathbb{Z}^d} \mathbb{E}\left[\left( 
                                                                             \frac{1}{| I |}\sum_{(n,x)\in I} \mu_{N,\omega}^{n,x}(1_{(m-n,y-x)\in \gamma_N})
                                                                           \right)^2
                                                                    e^{\theta\overline{F}_{N,\omega}^I}
                                                                    \right]\nonumber\\          
 &\leq  \frac{1}{4}\theta^2\beta^2 \sum_{(m,y)\in \mathbb{N}\times\mathbb{Z}^d} \mathbb{E}\left[ 
                                                                             \frac{1}{| I |}\sum_{(n,x)\in I} \mu_{N,\omega}^{n,x}(1_{(m-n,y-x)\in \gamma_N})
                                                                    \,\,e^{\theta\overline{F}_{N,\omega}^I}
                                                                   \right]\\        
  &=    \frac{1}{4}\theta^2\beta^2  N\,\, \mathbb{E}\left[   
                                                                                          e^{\theta\overline{F}_{N,\omega}^I}
                                                                                   \right]  ,\nonumber                                                                                                                                                                   
\end{align}
where the last equality is achieved by performing first the summation over $(m,y)$ and using that the range of the path consists of $N$ sites after the starting site. Regarding the second term on the right side of \eqref{ABsub}, we define $\mathcal{M}^{I}_{N,\omega}=\max_{(n,x)\in I} \mathcal{M}^{(n,x)}_{N,\omega}$ for a set $I\subset \mathbb{N}\times \mathbb{Z}^{d}$. We then have
 $-\beta \mathcal{M}^{I}_{N,\omega} \leq \overline{F}_{N,\omega}^I \leq \beta \mathcal{M}^{I}_{N,\omega} $ so following similar steps as in \eqref{Poincare estimate} we have
 \begin{eqnarray}\label{termtwo}
  \sum_{(m,y)\in \mathbb{N}\times\mathbb{Z}^d} 
   && \mathbb{E}\left[|\omega_{m,y}| \left(\frac{\partial}{\partial\omega_{m,y}} e^{\frac{\theta}{2}   \overline{F}_{N,\omega}^I }  \right)^2 \right]\nonumber\\
&\leq&
      \frac{1}{4}\theta^2\beta^2 \sum_{(m,y)\in \mathbb{N}\times\mathbb{Z}^d} \mathbb{E}\left[ 
                                                                             \frac{1}{| I |}\sum_{(n,x)\in I} \mu_{N,\omega}^{n,x}
                                                                             (|\omega_{m,y}|1_{(m-n,y-x)\in \gamma_N})
                                                                    \,\,e^{\theta\overline{F}_{N,\omega}^I}
                                                                 \right] \nonumber\\        
&\leq&  \frac{1}{4}\theta^2\beta^2
              \mathbb{E}\left[  \mathcal{M}^{I}_{N,\omega}  e^{\theta \overline{F}_{N,\omega}^I}
                          \right]    \nonumber  \\
 &\leq&  \frac{1}{4}\theta^2\beta^2 \Big( bN
              \mathbb{E}\left[  e^{\theta \overline{F}_{N,\omega}^I}
                          \right] +
               \mathbb{E}\left[ \mathcal{M}^{I}_{N,\omega} e^{|\theta| \beta \mathcal{M}^{I}_{N,\omega}}
                         ; \mathcal{M}^{I}_{N,\omega} > bN \right]                   
                          \Big),                                                                    
 \end{eqnarray}
where $b$ a constant to be specified.  We would like to show that the second term on the right side of \eqref{termtwo} is smaller that the first one. First, in the case that $\theta>0$, since the disorder has mean zero, bounding $Z_{N,\omega}^{(n,x)}$ below by the contribution from any one path and then applying Jensen's inequality to the expectation $\mathbb{E}[\cdot]$ we obtain
 \begin{eqnarray}\label{viaJensen}
 \mathbb{E}\left[ e^{\theta \overline{F}_{N,\omega}^I}\right] \geq  e^{-\theta N \log(2d)},
 \end{eqnarray}
while  in the case that $\theta<0$, applying Jensen's inequality to the average over $I$ gives
\[
  \mathbb{E}\left[ e^{\overline{F}_{N,\omega}^I}\right] \leq \mathbb{E}\left[ Z_{N,\omega} \right] = e^{\lambda(\beta)N},
\]
with $\lambda(\beta)$ the log-moment generating function of $\omega$, and hence, taking the $\theta$ power and then applying Jensen's inequality to $\mathbb{E}[\cdot]$,
  \begin{eqnarray}\label{viaJensen2}
 \mathbb{E}\left[ e^{\theta \overline{F}_{N,\omega}^I}\right] \geq e^{\theta N \lambda(\beta)}.
 \end{eqnarray}
 Moreover, for $b>0$ we have 
 \begin{eqnarray}\label{bterm}
 &&\mathbb{E}\left[ \mathcal{M}^{I}_{N,\omega} e^{|\theta| \beta \mathcal{M}^{I}_{N,\omega}}
                         ; \mathcal{M}^{I}_{N,\omega} > bN \right] \nonumber \\
  &=& bN e^{|\theta| \beta bN} \mathbb{P}(\mathcal{M}^{I}_{N,\omega}>bN)    
     + N\int_b^\infty (1+|\theta| \beta uN) e^{|\theta| \beta uN} \mathbb{P}(\mathcal{M}^{I}_{N,\omega}>uN) du.             
 \end{eqnarray}
 Denoting by $\mathcal{J}(\cdot)$ the large deviation rate function related to $|\omega|$ we have that \eqref{bterm} is bounded by
 \begin{eqnarray}\label{bbterm}
   bN(2d)^N |I|  e^{(|\theta|\beta b-\mathcal{J}(b))N} + N(2d)^N |I| \int_b^\infty (1+|\theta|\beta u N) e^{(|\theta|\beta u-\mathcal{J}(u))N} du.
 \end{eqnarray}
Let $0<L< \lim_{x\to \infty} \mathcal{J}(x)/x$ (which exists since $\mathcal{J}(x)/x$ is nondecreasing for $x>\E|\omega|$) and choose $b$ large enough so $\mathcal{J}(b)/b>L$.  Then provided $|\theta|$ is small enough (depending on $\beta, \nu$) and $b$ is large enough (depending on $\nu$), \eqref{bbterm}is bounded above by
 \begin{eqnarray*}
 &&bN(2d)^{2N} e^{(|\theta|\beta-L)bN}+ N(2d)^{2N}\int_b^\infty (1+|\theta| \beta uN) e^{(|\theta|\beta-L)uN}du\\
 &\leq& bN (2d)^{2N} e^{-\frac{L}{2}bN} +N (2d)^{2N}\int_b^\infty (1+|\theta| \beta uN) e^{-\frac{L}{2}uN}du\\
 &\leq&e^{-LbN/4}\\
 &\leq& \mathbb{E}\left[ e^{\theta \overline{F}^I_{N,\omega}}\right],
 \end{eqnarray*} 
 where the last inequality uses \eqref{viaJensen} and \eqref{viaJensen2}. This combined with \eqref{termtwo} and \eqref{Poincare estimate} completes the proof.
\end{proof}

The averaging over sets $I$ used in the preceding proof is related to the auxiliary randomness used in the main proof in \cite{BKS03}.

Define the point-to-point partition function
\[
  Z_{N,\omega}(z)=E\left[\exp\left(\beta\sum_{n=1}^N\omega_{n,x_n}\right)1_{x_N=z}\right] 
\]
and let $\mu_{N,\omega,z}$ be the corresponding Gibbs measure.
With $I$ fixed, we define
\begin{eqnarray*}
W_{N,\omega}^{(m,y)}:=\int\left|\overline{F}_{N,(\hat\omega^{(m,y)},\tilde\omega_{m,y})}^I- \overline{F}_{N,\omega}^I \right|d\mathbb{P}(\tilde\omega_{m,y}),
\end{eqnarray*}
\[
  L_{N,\omega}^{(m,y)}(z):=\int\left| \log Z_{N,(\hat\omega^{(m,y)},\tilde\omega_{m,y})}(z) - \log Z_{N,\omega}(z) \right|
    d\mathbb{P}(\tilde\omega_{m,y}),
\]
\begin{eqnarray*}
W_{N,\omega}:= \sum_{(m,y)\in \mathbb{N}\times \mathbb{Z}^d} W_{N,\omega}^{(m,y)}, \qquad
  L_{N,\omega}(z):= \sum_{(m,y)\in \mathbb{N}\times \mathbb{Z}^d} L_{N,\omega}^{(m,y)}(z),
\end{eqnarray*}
\begin{eqnarray*}
W_{N,\omega,\pm}^{(m,y)}:=\int\left(\overline{F}_{N,(\hat\omega^{(m,y)},\tilde\omega_{m,y})}^I- \overline{F}_{N,\omega}^I \right)_\pm d\mathbb{P}(\tilde\omega_{m,y}),
\end{eqnarray*}
\[
  L_{N,\omega,\pm}^{(m,y)}(z):=\int\left( \log Z_{N,(\hat\omega^{(m,y)},\tilde\omega_{m,y})}(z) 
    - \log Z_{N,\omega}(z) \right)_\pm d\mathbb{P}(\tilde\omega_{m,y}),
\]
and
\begin{eqnarray*}
W_{N,\omega,\pm}:= \sum_{(m,y)\in \mathbb{N}\times \mathbb{Z}^d} W_{N,\omega,\pm}^{(m,y)}, \qquad
  L_{N,\omega,\pm}(z):= \sum_{(m,y)\in \mathbb{N}\times \mathbb{Z}^d} L_{N,\omega,\pm}^{(m,y)}(z).
\end{eqnarray*}
We finally define 
\begin{eqnarray*}
r_N:=\sup_{(m,y)\in \mathbb{N}\times \mathbb{Z}^d} \sqrt{\mathbb{E}\left[\big(W_{N,\omega}^{(m,y)} \big)^2\right] }, \qquad
  \hr_N(z) := \sup_{(m,y)\in \mathbb{N}\times \mathbb{Z}^d} \sqrt{\mathbb{E}\left[\big(L_{N,\omega}^{(m,y)}(z) \big)^2\right] }, 
\end{eqnarray*}
\begin{eqnarray*}
s_N:=\sqrt{\mathbb{E}\left[\big(W_{N,\omega} \big)^2\right] }, \qquad
  \hs_N(z):=\sqrt{\mathbb{E}\left[\big(L_{N,\omega}(z) \big)^2\right] },
\end{eqnarray*}
\begin{eqnarray*}
r_N^{\pm}:=\sup_{(m,y)\in \mathbb{N}\times \mathbb{Z}^d} \sqrt{\mathbb{E}\left[\big(W_{N,\omega,\pm}^{(m,y)} \big)^2\right] },
  \qquad \hr_N^{\pm}(z):=\sup_{(m,y)\in \mathbb{N}\times \mathbb{Z}^d} 
  \sqrt{\mathbb{E}\left[\big(L_{N,\omega,\pm}^{(m,y)}(z) \big)^2\right] }
\end{eqnarray*}
and 
\begin{eqnarray*}
s_N^\pm:=\sqrt{\mathbb{E}\left[\big(W_{N,\omega,\pm} \big)^2\right] }, \qquad
  \hs_N^\pm(z):=\sqrt{\mathbb{E}\left[\big(L_{N,\omega,\pm}(z) \big)^2\right] }.
\end{eqnarray*}
It is clear that $r_n\leq r_N^++r_N^-$ and $s_n\leq s_N^++s_N^-$.

We make use of two choices of the set $I$ of sites:  let $0<\alpha<1/2$ and 
\[
  I_\pm^\alpha:=\{(n,x)\in \mathbb{N}\times \mathbb{Z}^d\colon n=\pm N^{\alpha},|x|_\infty<N^\alpha\}.
\]

\begin{proposition}\label{rNsN}
For $\alpha<1/2$ and $I = I_\pm^\alpha$, there exists $K_3$ such that the following estimates hold true:
\[
r_N^\pm \leq \frac{1}{|I_+^\alpha|^{1/4}} K_3, \qquad
  \hr_N^\pm(z) \leq K_3,
\]
\[
s_N^\pm \leq K_3 N, \qquad \hs_N^\pm(z) \leq K_3 N.
\]
\end{proposition}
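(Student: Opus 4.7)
The plan is to exploit the explicit identity
\[
\log Z^{(n,x)}_{N,\tilde\omega}-\log Z^{(n,x)}_{N,\omega}=\log\bigl(1+(e^{\beta(\tilde\omega_{m,y}-\omega_{m,y})}-1)\,\mu^{(n,x)}_{N,\omega}(x_{m-n}=y-x)\bigr),
\]
obtained by factoring the partition function according to whether the path visits $(m,y)$, then to average over $(n,x)\in I$ and extract one-sided bounds via $\log(1+u)\le u$ for $u>0$ and, for $\tilde<\omega$, the concavity estimate $\log(\mu e^{\beta(\tilde-\omega)}+(1-\mu))\ge\mu\beta(\tilde-\omega)$. These yield
\[
W^{(m,y)}_{N,\omega,\pm}\le G_\pm(\omega_{m,y})\,\mu^I_{m,y}, \qquad L^{(m,y)}_{N,\omega,\pm}(z)\le G_\pm(\omega_{m,y})\,\mu_{N,\omega,z}(x_m=y),
\]
where $\mu^I_{m,y}:=|I|^{-1}\sum_{(n,x)\in I}\mu^{(n,x)}_{N,\omega}(x_{m-n}=y-x)$, $G_+(\omega):=\int(e^{\beta(\tilde-\omega)}-1)_+ d\nu(\tilde)$, and $G_-(\omega):=\beta\int(\omega-\tilde)_+ d\nu(\tilde)$. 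The hypothesis $\int e^{4\beta|\omega|}d\nu<\infty$ guarantees that $G_\pm$ have finite $p$-th moments for $p\le 4$.

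Since $\mu_{N,\omega,z}(x_m=y)\le 1$, the first inequality gives $\mathbb{E}[(L^{(m,y)}_{\pm}(z))^2]\le\mathbb{E}[G_\pm^2]<\infty$, hence $\hat r_N^\pm(z)\le K_3$. For the summed versions, I would apply Cauchy--Schwarz with weights $\mu_{N,\omega,z}(x_m=y)$ and use $\sum_{(m,y)}\mu_{N,\omega,z}(x_m=y)=N$ (and the analogous identity $\sum_{(m,y)}\mu^I_{m,y}=N$) to obtain $L_{N,\omega,\pm}(z)^2\le N\sum_{(m,y)}\mu_{N,\omega,z}(x_m=y)G_\pm^2(\omega_{m,y})$. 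The bound $\hat s_N^\pm(z),s_N^\pm\le K_3 N$ then reduces to the technical claim
\[
\mathbb{E}\bigl[\mu^{(n,x)}_{N,\omega}(x_{m-n}=y-x)\,G_\pm(\omega_{m,y})^2\bigr]\le K\,\mathbb{E}\bigl[\mu^{(n,x)}_{N,\omega}(x_{m-n}=y-x)\bigr],
\]
which I would prove by writing $\mu^{(n,x)}_{N,\omega}(x_{m-n}=y-x)=ae^{\beta\omega_{m,y}}/(ae^{\beta\omega_{m,y}}+b)$ for $a,b$ measurable with respect to $\omega^{(m,y)^c}$, conditioning on $(a,b)$, and checking that the resulting tilted one-dimensional law of $\omega_{m,y}$ integrates $G_\pm^2$ by a constant uniform in $(a,b)$ (treating the regimes $a/b$ small, moderate, and large separately and bounding the tilted density either by $\nu$ or by $e^{\beta t}\nu/\int e^{\beta\tilde}d\nu$).

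The delicate bound is $r_N^\pm\le K_3/|I_+^\alpha|^{1/4}$. From $(W^{(m,y)}_\pm)^2\le G_\pm^2(\mu^I_{m,y})^2\le G_\pm^2\,\mu^I_{m,y}$ (using $\mu^I\le 1$) and the technical claim, $\mathbb{E}[(W^{(m,y)}_\pm)^2]\le K\,\mathbb{E}[\mu^I_{m,y}]$, so it suffices to show $\sup_{(m,y)}\mathbb{E}[\mu^I_{m,y}]\le C/|I_+^\alpha|^{1/2}$. For this I would use an annealed polymer estimate $\mathbb{E}[\mu^{(n,x)}_{N,\omega}(x_k=z)]\le C\,k^{-d/2}$, obtainable by bounding $\mu\le E[e^H\mathbf{1}_{x_k=z}]/(c\,e^{N\lambda(\beta)})$ on a high-probability event (via a crude form of concentration of $\log Z^{(n,x)}_N$), and then using $\mathbb{E}[E[e^H\mathbf{1}_{x_k=z}]]=P(x_k=z)e^{N\lambda(\beta)}$ together with the usual SRW local CLT. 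One then combines this with the geometry of $I_+^\alpha$: for $m-N^\alpha\lesssim N^\alpha$ only $O((m-N^\alpha)^d)$ starting points can reach $(m,y)$, while for $m-N^\alpha\gtrsim N^\alpha$ the per-term bound $(m-N^\alpha)^{-d/2}\le N^{-d\alpha/2}=|I_+^\alpha|^{-1/2}$ already dominates, and in both regimes the supremum of $\mathbb{E}[\mu^I_{m,y}]$ is $O(|I_+^\alpha|^{-1/2})$. The main obstacle is this annealed local-CLT estimate: transferring SRW-type bounds to the polymer requires care with the random denominator $Z^{(n,x)}_N$, and one must invoke a coarser concentration of $\log Z$ than Theorem \ref{expconc} to avoid circularity.
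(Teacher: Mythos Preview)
Your treatment of $\hat r_N^\pm(z)$, $\hat s_N^\pm(z)$ and $s_N^\pm$ is broadly comparable to the paper's, with cosmetic differences: the paper replaces your tilted-law ``technical claim'' by the direct observation that $\mu^{(n,x)}_{N,\omega}(1_{x_{m-n}=y-x})$ is nondecreasing in $\omega_{m,y}$ while $e^{-2\beta\omega_{m,y}}$ is nonincreasing, so the two are negatively correlated; for $s_N^-$ it bounds the sum over $(m,y)$ by $\beta\mathcal{M}_{N,\omega}+\beta N\,\mathbb{E}[(\omega_{0,0})_-]$ and controls $\mathbb{E}[\mathcal{M}_{N,\omega}^2]$ by a short large-deviation computation. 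Your route via $G_\pm$ and conditioning would also work here.

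The genuine gap is in $r_N^\pm$. Your reduction to $\sup_{(m,y)}\mathbb{E}[\mu^I_{m,y}]\le C|I_+^\alpha|^{-1/2}$ is correct, but the proposed annealed estimate $\mathbb{E}[\mu^{(n,x)}_{N,\omega}(x_k=z)]\le Ck^{-d/2}$ cannot be obtained as you sketch. You propose bounding $\mu\le E[e^H\mathbf{1}_{x_k=z}]/(c\,e^{N\lambda(\beta)})$ on a high-probability event; but concentration (of any strength) only pins $\log Z_{N,\omega}$ near $Np(\beta)$, and in the strong-disorder regime $p(\beta)<\lambda(\beta)$ strictly, so after taking $\mathbb{E}$ you would get $\mathbb{E}[\mu]\lesssim P(x_k=z)\,e^{N(\lambda(\beta)-p(\beta))}$, which blows up exponentially in $N$. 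This is not a circularity issue but a failure of the method; an annealed polymer local limit theorem of that strength would itself be a substantial result, certainly not available as input here.

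The paper bypasses this completely with a one-line observation you are missing. All points $(n,x)\in I_+^\alpha$ share the \emph{same} time coordinate $n=\lfloor N^\alpha\rfloor$. By translation invariance of the environment, $\mathbb{E}[\mu^{(n,x)}_{N,\omega}(1_{x_{m-n}=y-x})]=\mathbb{E}[\mu_{N,\omega}(1_{x_{m-n}=y-x})]$ for each term, so
\[
\mathbb{E}[\mu^I_{m,y}]=\frac{1}{|I_+^\alpha|}\,\mathbb{E}\Bigl[\mu_{N,\omega}\Bigl(\sum_{x:|x|_\infty<N^\alpha}\mathbf{1}_{x_{m-N^\alpha}=y-x}\Bigr)\Bigr]\le\frac{1}{|I_+^\alpha|},
\]
because a path occupies exactly one site at time $m-N^\alpha$, so the inner sum is $\le 1$ pathwise. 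This immediately gives the required bound (indeed, stronger than the $|I_+^\alpha|^{-1/2}$ you targeted), with no annealed path estimate whatsoever. Plugging this into your own reduction already closes the argument.
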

\begin{proof}
We first consider $r_N^\pm$ and $s_N^\pm$.  Observe that
\begin{eqnarray} \label{posnegpart}
\left(\overline{F}_{N,(\hat\omega^{(m,y)},\tilde\omega_{m,y})}^{I_+^\alpha}- \overline{F}_{N,\omega}^{I_+^\alpha} \right)_\pm
   &=& \left( \frac{1}{| I_+^\alpha |}\sum_{(n,x)\in I_+^\alpha} \big( 
                                                                                      \log Z _{N,(\hat\omega^{(m,y)},\tilde\omega_{m,y})}^{(n,x)}- \log Z_{N,\omega}^{(n,x)}
                                                                                      \big)
      \right)_\pm\nonumber\\
      &\leq&  \frac{1}{| I_+^\alpha |}\sum_{(n,x)\in I_+^\alpha} \big( 
                                                                                      \log Z _{N,(\hat\omega^{(m,y)},\tilde\omega_{m,y})}^{(n,x)}- \log Z_{N,\omega}^{(n,x)}
                                                                                      \big)_\pm.
\end{eqnarray}
The difference on the right side can be written as
\begin{eqnarray} \label{logdiff2}
 \log Z _{N,(\hat\omega^{(m,y)},\tilde\omega_{m,y})}^{(n,x)}- \log Z_{N,\omega}^{(n,x)}
 &=&
 \log \frac{1}{Z_{N,\omega}^{(n,x)}}
         E\left[ e^{\beta\sum_{i=1}^N\omega_{n+i,x+x_i}} e^{\beta\big(\tilde\omega_{m,y}-\omega_{m,y}\big)1_{x+x_{m-n}=y}}
 \right]\nonumber\\
 &=&  \log \mu_{N,\omega}^{(n,x)}\left( e^{\beta\big(\tilde\omega_{m,y}-\omega_{m,y}\big)1_{x+x_{m-n}=y}}
 \right)\nonumber\\
 &=& \log\left( 1+ \mu_{N,\omega}^{(n,x)}\left( e^{\beta\big(\tilde\omega_{m,y}-\omega_{m,y}\big)1_{x+x_{m-n}=y}}-1\right)
 \right)\\
 &\leq& \log\left( 1+ e^{\beta\big(\tilde\omega_{m,y}-\omega_{m,y}\big)} \mu_{N,\omega}^{(n,x)}\left( 1_{x+x_{m-n}=y}
    \right) \right) \notag \\
  &\leq& e^{\beta\big(\tilde\omega_{m,y}-\omega_{m,y}\big)} \mu_{N,\omega}^{(n,x)}\left( 1_{x+x_{m-n}=y} \right),\nonumber
\end{eqnarray}
so
\begin{align}\label{plus}
W_{N,\omega,+}^{(m,y)} &= \int \left(\overline{F}_{N,(\hat\omega^{(m,y)},\tilde\omega_{m,y})}^{I_+^\alpha}- \overline{F}_{N,\omega}^{I_+^\alpha} \right)_+\,d\mathbb{P}(\tilde\omega_{m,y})\notag\\
 &\leq\frac{1}{| I_+^\alpha |}\sum_{(n,x)\in I_+^\alpha} \mu_{N,\omega}^{(n,x)}\left( 1_{x+x_{m-n}=y}
 \right)  \int_{\tilde\omega_{m,y}\geq\omega_{m,y}} 
  e^{\beta\big(\tilde\omega_{m,y}-\omega_{m,y}\big)} \,d\mathbb{P}(\tilde\omega_{m,y}).
\end{align}
To bound $r_N^+$, we have using \eqref{plus}:
\begin{align} \label{rNplus}
\mathbb{E}\left[\big(W_{N,\omega,+}^{(m,y)} \big)^2\right] &\leq
  \mathbb{E}\left[ \left( \frac{1}{| I_+^\alpha |} \sum_{(n,x)\in I_+^\alpha}  \mu_{N,\omega}^{(n,x)}\left( 1_{x+x_{m-n}=y}
   \right)  \int_{\tilde\omega_{m,y}\geq\omega_{m,y}} 
  e^{\beta\big(\tilde\omega_{m,y}-\omega_{m,y}\big)} \,d\mathbb{P}(\tilde\omega_{m,y})  \right)^2\right]
  \nonumber \\
  &\leq
  \mathbb{E}\left[ \left( \frac{1}{| I_+^\alpha |} \sum_{(n,x)\in I_+^\alpha}  \mu_{N,\omega}^{(n,x)}\left( 1_{x+x_{m-n}=y}
 \right)\right)^4\right]^{1/2}\nonumber\\
 &\qquad\times
    \mathbb{E}\left[ \left(
   \int_{\tilde\omega_{m,y}\geq\omega_{m,y}} 
  e^{\beta\big(\tilde\omega_{m,y}-\omega_{m,y}\big)} \,d\mathbb{P}(\tilde\omega_{m,y})  \right)^4\right]^{1/2} \nonumber\\
  &\leq
  \mathbb{E}\left[  \frac{1}{| I_+^\alpha |} \sum_{(n,x)\in I_+^\alpha}  \mu_{N,\omega}^{(n,x)}\left( 1_{x+x_{m-n}=y}
 \right)\right]^{1/2} e^{\frac{1}{2}(\lambda(-4\beta)+4\lambda(\beta))} \nonumber\\
  &=
   \mathbb{E}\left[  \frac{1}{| I_+^\alpha |} \sum_{(n,x)\in I_+^\alpha}  \mu_{N,\omega}\left( 1_{x+x_{m-n}=y}
 \right)\right]^{1/2} e^{\frac{1}{2}(\lambda(-4\beta)+4\lambda(\beta))} \\
   &\leq
    \frac{1}{| I_+^\alpha |^{1/2}} e^{\frac{1}{2}(\lambda(-4\beta)+4\lambda(\beta))},  \nonumber
\end{align}
where in the equality we used the homogeneity of the environment and in the last inequality we used the fact that the directed path has at most one contact point with the set $I_+^\alpha$ and, therefore,
 $\sum_{(n,x)\in I_+^\alpha} 1_{x+x_{m-n}=y} \leq 1$. Hence
 \begin{eqnarray*}
 r_N^+\leq \frac{1}{| I_+^\alpha |^{1/4}} e^{\frac{1}{4}(\lambda(-4\beta)+4\lambda(\beta))}.
 \end{eqnarray*}
 The estimate on $s_N^+$ follows along the same lines. Specifically, we have using \eqref{plus} that
 \begin{align} \label{summoment}
& \mathbb{E}\left[\left(W_{N,\omega,+}\right)^2\right] = \mathbb{E}\left[\left( \sum_{(m,y)\in \mathbb{N}\times \mathbb{Z}^d} W_{N,\omega,+}^{(m,y)} \right)^2\right]\nonumber\\
 &\leq
 \mathbb{E}\left[ \left( \sum_{(m,y)\in \mathbb{N}\times \mathbb{Z}^d} \frac{1}{| I_+^\alpha |} \sum_{(n,x)\in I_+^\alpha}  \mu_{N,\omega}^{(n,x)}\left( 1_{x+x_{m-n}=y}
 \right)  \int_{\tilde\omega_{m,y}\geq\omega_{m,y}} 
  e^{\beta\big(\tilde\omega_{m,y}-\omega_{m,y}\big)} \,d\mathbb{P}(\tilde\omega_{m,y})  \right)^2\right]\nonumber\\
  &\leq
  e^{2\lambda(\beta)} \mathbb{E}\left[ \left( \sum_{(m,y)\in \mathbb{N}\times \mathbb{Z}^d} \frac{1}{| I_+^\alpha |} \sum_{(n,x)\in I_+^\alpha}  \mu_{N,\omega}^{(n,x)}\left( 1_{x+x_{m-n}=y}
 \right) e^{-\beta\omega_{m,y}}
  \right)^2\right]\nonumber\\
  &\leq
   e^{2\lambda(\beta)} \mathbb{E}\left[ \frac{1}{| I_+^\alpha |} \sum_{(n,x)\in I_+^\alpha}  
                    \left( \sum_{(m,y)\in \mathbb{N}\times \mathbb{Z}^d}  \mu_{N,\omega}^{(n,x)}\left( 1_{x+x_{m-n}=y}  \right) e^{-\beta\omega_{m,y}}
  \right)^2\right]\nonumber\\
  &\leq
    e^{2\lambda(\beta)} \mathbb{E}\left[ \frac{1}{| I_+^\alpha |} \sum_{(n,x)\in I_+^\alpha}  
                    \left( \sum_{(m,y)\in \mathbb{N}\times \mathbb{Z}^d}  \mu_{N,\omega}^{(n,x)}\left( 1_{x+x_{m-n}=y}  \right)
  \right)
  \left( \sum_{(m,y)\in \mathbb{N}\times \mathbb{Z}^d}  \mu_{N,\omega}^{(n,x)}\left( 1_{x+x_{m-n}=y}  \right) e^{-2\beta\omega_{m,y}}
  \right)\right]\nonumber\\
  &=
  N\,e^{2\lambda(\beta)} \,
  \frac{1}{| I_+^\alpha |} \sum_{(n,x)\in I_+^\alpha}  
   \sum_{(m,y)\in \mathbb{N}\times \mathbb{Z}^d}  \mathbb{E}\left[ \mu_{N,\omega}^{(n,x)}\left( 1_{x+x_{m-n}=y}  \right) e^{-2\beta\omega_{m,y}} \right]  \nonumber\\
   &\leq
     N\,e^{2\lambda(\beta)} \,
  \frac{1}{| I_+^\alpha |} \sum_{(n,x)\in I_+^\alpha}  
   \sum_{(m,y)\in \mathbb{N}\times \mathbb{Z}^d}  \mathbb{E}\left[ \mu_{N,\omega}^{(n,x)}\left( 1_{x+x_{m-n}=y}  \right) \right] \mathbb{E}\left[ e^{-2\beta\omega_{m,y}}\right] \\
   &=  N^2\,e^{\lambda(-2\beta)+2\lambda(\beta)}\nonumber
 \end{align}
 where 
 in the equalities we used the fact that 
  \begin{equation}\label{rangeN}
    \sum_{(m,y)\in \mathbb{N}\times \mathbb{Z}^d}\mu_{N,\omega}^{(n,x)}\left( 1_{x+x_{m-n}=y} \right) =N,
 \end{equation}
and in the last inequality we used the easily verified fact that $\mu_{N,\omega}^{(n,x)}\left( 1_{x+x_{m-n}=y}\right)$ and $e^{-\beta\omega_{m,y}}$ are negatively correlated.
It follows from \eqref{summoment} that
 \begin{eqnarray*}
 s_N^+\leq N  e^{\frac{1}{2}(\lambda(-2\beta)+2\lambda(\beta))}.
 \end{eqnarray*}
 We now need to show how these estimates extend to $r_N^-,s_N^-$. Using \eqref{posnegpart} and the second equality in \eqref{logdiff2},
 \begin{eqnarray} \label{Fminus}
 &&\left(\overline{F}_{N,(\hat\omega^{(m,y)},\tilde\omega_{m,y})}^{I_+^\alpha}
   - \overline{F}_{N,\omega}^{I_+^\alpha} \right)_- \notag\\
 &\leq&
 -\frac{1}{| I_+^\alpha|}\sum_{(n,x)\in I_+^\alpha}\log \mu_{N,\omega}^{(n,x)}\left( e^{\beta(\tilde\omega_{m,y}-\omega_{m,y})1_{x+x_{m-n=y}}} \right) 1_{\tilde\omega_{m,y}<\omega_{m,y}}.
 \end{eqnarray}
 By Jensen's inequality this is bounded by
 \begin{eqnarray} \label{jensen}
&& \frac{1}{| I_+^\alpha|}
  \sum_{(n,x)\in I_+^\alpha}  \mu_{N,\omega}^{(n,x)}
 \left( 1_{x+x_{m-n=y} } \right) 
  \beta(\omega_{m,y}-\tilde\omega_{m,y} )1_{\tilde\omega_{m,y}<\omega_{m,y}}.
 \end{eqnarray}
 It follows that 
 \[
   \mathbb{E}\left[\big(W_{N,\omega,-}^{(m,y)} \big)^2\right] \leq
  \mathbb{E}\left[ \left( \frac{1}{| I_+^\alpha |} \sum_{(n,x)\in I_+^\alpha}  \mu_{N,\omega}^{(n,x)}\left( 1_{x+x_{m-n}=y}
   \right)  \int_{\tilde\omega_{m,y}<\omega_{m,y}} 
  \beta\big(\omega_{m,y}-\tilde\omega_{m,y}\big) \,d\mathbb{P}(\tilde\omega_{m,y})  \right)^2\right]
 \]
 From this we can proceed analogously to \eqref{rNplus} and obtain
 \[
   r_N^- \leq \frac{\beta}{| I_+^\alpha |^{1/4}} \left( \mathbb{E}[\omega_{m,y}^4]  +\mathbb{E}[|\omega_{m,y}|]^4 \right)^{1/4}.
 \]
To bound $s_N^-$ we first observe that
\begin{equation} \label{omegas}
  \int_{\tilde\omega_{m,y}\leq\omega_{m,y}} 
    \big(\omega_{m,y}-\tilde\omega_{m,y}\big) \,d\mathbb{P}(\tilde\omega_{m,y}) \leq (\omega_{m,y})_+ + \E[(\omega_{0,0})_-].
\end{equation}
Using \eqref{posnegpart}, \eqref{rangeN}, \eqref{omegas} and the three equalities in \eqref{logdiff2}, it follows that
\begin{align} \label{summinus}
\mathbb{E}[(W_{N,\omega,-})^2] &= \mathbb{E}\left[\left(\sum_{(m,y)\in \mathbb{N}\times\mathbb{Z}^d} W_{N,\omega,-}^{(m,y)}\right)^2\right]\notag\\
&\leq
\mathbb{E}\left[ \left( \sum_{(m,y)\in \mathbb{N}\times \mathbb{Z}^d} \frac{1}{| I_+^\alpha |} 
  \sum_{(n,x)\in I_+^\alpha}  \mu_{N,\omega}^{(n,x)}\left( 1_{x+x_{m-n}=y}
  \right) \beta ( (\omega_{m,y})_+ + \E[(\omega_{0,0})_-] )  \right)^2\right]\notag\\
&\leq 2\beta^2 N^2 ( \E[(\omega_{0,0})_-] )^2 + 2\beta^2 \mathbb{E}\left[ \left( 
  \frac{1}{| I_+^\alpha |} \sum_{(n,x)\in I_+^\alpha} \mathcal{M}_{N,\omega}^{(n,x)}\right)^2 \right] \notag\\
&\leq 2\beta^2 N^2 ( \E[(\omega_{0,0})_-] )^2 + 2\beta^2 \mathbb{E}\left[( \mathcal{M}_{N,\omega})^2 \right],
\end{align}
where 
 $\mathcal{M}_{N,\omega}$ is from \eqref{absmax}.
A similar computation to the one following \eqref{termtwo} shows that for $L,b$ as chosen after \eqref{bbterm}, with $b$ sufficiently large (depending on $\nu$),
\begin{align} \label{2ndmoment}
  \E[(\mathcal{M}_{N,\omega})^2] &\leq (bN)^2 + \int_{(bN)^2}^\infty \P\left((\mathcal{M}_{N,\omega})^2 > t\right)\ dt \notag \\
  &\leq (bN)^2 + N^2 \int_{b^2}^\infty \P\left(\mathcal{M}_{N,\omega} > N\sqrt{y}\right)\ dy \notag \\
  &\leq (bN)^2 + N^2(2d)^N \int_{b^2}^\infty e^{-N\mathcal{J}(\sqrt{y})}\ dy \notag \\
  &\leq (bN)^2 + N^2(2d)^N \int_{b^2}^\infty e^{-NL\sqrt{y}}\ dy \notag \\
  &\leq (bN)^2 + N^2e^{-LbN/2} \notag\\
  &\leq (b^2+1)N^2.
\end{align}
With \eqref{summinus} this shows that 
\[
  s_N^- \leq K_3 N.
\]

Turning to $\hr_N^\pm(z)$ and $\hs_N^\pm(z)$,
as in \eqref{logdiff2} we have
\begin{align} \label{logdiff3}
  \log Z _{N,(\hat\omega^{(m,y)},\tilde\omega_{m,y})}(z) - \log Z_{N,\omega}(z) 
  &\leq \mu_{N,\omega,z}\left(1_{x_m=y}\right) e^{\beta\big(\tilde\omega_{m,y}-\omega_{m,y}\big)}
\end{align}
and then as in \eqref{rNplus},
\begin{align} \label{rNplus2}
  \mathbb{E}\left[ \big(L_{N,\omega,+}^{(m,y)}(z) \big)^2 \right] &\leq
    \mathbb{E}\left[ \left( \int_{\tilde\omega_{m,y}\geq\omega_{m,y}} 
    e^{\beta\big(\tilde\omega_{m,y}-\omega_{m,y}\big)} \,d\mathbb{P}(\tilde\omega_{m,y})  \right)^2\right]
    \nonumber \\
  &\leq e^{\lambda(-2\beta)+2\lambda(\beta)},
\end{align}
so also
\begin{equation} \label{rNpbound2}
  \hr_N^+(z) \leq e^{\lambda(-2\beta)+2\lambda(\beta)}.
\end{equation}
Further, analogously to \eqref{summoment} but with $I_+^\alpha$ replaced by a single point, we obtain
\begin{align} \label{summoment2}
  \hs_N^+(z)^2 &= \mathbb{E}\left[ \big(L_{N,\omega,+}(z) \big)^2 \right] \notag \\
  &\leq
     N\,e^{2\lambda(\beta)} \, 
     \sum_{(m,y)\in \mathbb{N}\times \mathbb{Z}^d}  
     \mathbb{E}\left[ \mu_{N,\omega,z}\left( 1_{x_m=y}  \right) \right] 
     \mathbb{E}\left[ e^{-2\beta\omega_{m,y}}\right] \notag\\
   &=  N^2\,e^{\lambda(-2\beta)+2\lambda(\beta)}.
\end{align}
Next, analogously to \eqref{Fminus} and \eqref{jensen},
\begin{align} \label{minusbound}
  &\left( \log Z _{N,(\hat\omega^{(m,y)},\tilde\omega_{m,y})}(z) - \log Z_{N,\omega}(z) \right)_- 
    \leq \beta\mu_{N,\omega,z}\left( 1_{x_m=y}  \right) (\omega_{m,y}-\tilde\omega_{m,y} )1_{\tilde\omega_{m,y}<\omega_{m,y}}
\end{align}
so
\begin{equation}
  \mathbb{E}\left[ \big(L_{N,\omega,-}^{(m,y)}(z) \big)^2 \right] \leq 2\beta^2 \E(\omega_{m,y}^2)
\end{equation}
and hence 
\[
  \hr_N^-(z) \leq 2\beta \E(\omega_{0,0}^2)^{1/2}.
\]
To deal with $\hs_N^-(z)$, observe that by \eqref{omegas} and \eqref{minusbound}, similarly to \eqref{summinus},
\begin{align}\label{Lbound}
  L_{N,\omega,-}(z) &\leq \sum_{(m,y)\in \mathbb{N}\times \mathbb{Z}^d} \beta\mu_{N,\omega,z}\left( 1_{x_m=y}  \right)
    ( (\omega_{m,y})_+ + \E[(\omega_{0,0})_-] ) \notag \\
  &\leq \beta \mathcal{M}_{N,\omega} + \beta N \E[(\omega_{0,0})_-].
\end{align}
Therefore $\hs_N^-(z)^2$ is bounded by the right side of \eqref{summinus}, which with \eqref{2ndmoment} shows $\hs_N^-(z) \leq K_3 N$.
\end{proof}

Proposition \ref{rNsN} shows that $\log [N/ (r_Ns_N \log (N / r_Ns_N))]$ is of order $\log N$.  We can apply Proposition \ref{Poinc} and Theorem \ref{main concentration}, the latter with $\rho_N = r_N, \sigma_N = s_N, F= \overline{F}_{N,\omega}^{I_+^\alpha}$ and $K$ a multiple of $N$, to yield part (i) of the next proposition.  Part (ii) follows similarly, using $\hr_N(z)$ and $\hs_N(z)$ in place of $r_N$ and $s_N$, and $F(\omega) = \log Z_{N,\omega}(z)$.  

\begin{proposition}\label{avgconc}
(i) There exist $K_4$ and $N_0=N_0(\beta,\nu)$ such that
\begin{eqnarray*}
\mathbb{P}\left( \left| \overline{F}_{N,\omega}^{I_+^\alpha}-\mathbb{E} \overline{F}_{N,\omega}^{I_+^\alpha}\right|>t\sqrt{\ell(N)}\right)\leq 8e^{-K_4 t},
\end{eqnarray*}
for $t>0$ and $N \geq N_0$, where $\ell(N)=N/\log N$.

(ii) There exists $K_5$ and $N_1=N_1(\beta,\nu)$ such that 
\begin{equation}
\mathbb{P}\left( \left| \log Z_{N,\omega}(z)-\mathbb{E}\log Z_{N,\omega}(z)\right|>t\sqrt{N}\right)\leq 8e^{-K_5t},
\end{equation}
for all $N\geq N_1,t>1$ and all $z \in \ZZ^d$ with $|z|_1 \leq N$.
\end{proposition}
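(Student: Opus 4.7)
The plan is to apply Theorem \ref{main concentration} directly in both parts, with Proposition \ref{Poinc} (or its straightforward analog) providing the modified Poincar\'e hypothesis \eqref{assumption Poincare} and Proposition \ref{rNsN} pinning down the effective scale $\sqrt{\ell(K)}$.

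For part (i), set $F=\overline{F}_{N,\omega}^{I_+^\alpha}$, $\rho_N=r_N$, $\sigma_N=s_N$, and take $K=C_{AB}\beta^2 N$ as supplied by Proposition \ref{Poinc}. Combining $r_N\le r_N^++r_N^-$ and $s_N\le s_N^++s_N^-$ with Proposition \ref{rNsN} yields $r_Ns_N=O(N^{1-(d+1)\alpha/4})$, so the requirement $K>e\rho_N\sigma_N$ holds for all $N$ past some $N_0(\beta,\nu)$. Both the inner and outer logarithms defining $\ell(K)$ are then $\Theta(\log N)$, whence $\ell(K)\asymp N/\log N$. Since $\ell(K)\to\infty$, the admissible range $|\theta|<1/(2\sqrt{\ell(K)})$ required by Theorem \ref{main concentration} eventually lies inside $(-\theta_0,\theta_0)$, where Proposition \ref{Poinc} applies, and Theorem \ref{main concentration} delivers the stated tail bound with $K_4$ proportional to $1/\sqrt{C_{AB}\beta^2}$.

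For part (ii), set $F(\omega)=\log Z_{N,\omega}(z)$ with $\rho_N=\hr_N(z)$, $\sigma_N=\hs_N(z)$. The analog of Proposition \ref{Poinc} is proved by the same calculation: $\partial_{\omega_{m,y}}\log Z_{N,\omega}(z)=\beta\mu_{N,\omega,z}(1_{x_m=y})$, the identity $\sum_{(m,y)}\mu_{N,\omega,z}(1_{x_m=y})=N$ handles the $B$-term as in \eqref{Poincare estimate}, and the two-sided bound $-N\log(2d)-\beta\mathcal{M}_{N,\omega}\le\log Z_{N,\omega}(z)\le\beta\mathcal{M}_{N,\omega}$ (valid whenever $z$ is reachable from the origin in $N$ steps, which is the only non-degenerate case) lets the $A|\omega_{m,y}|$-term be controlled by mimicking \eqref{termtwo}--\eqref{bbterm}, giving \eqref{assumption Poincare} with $K$ again a constant multiple of $\beta^2 N$.

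The decisive difference from part (i) is that Proposition \ref{rNsN} only gives $\hr_N(z)\hs_N(z)=O(N)$, with no small gain from averaging. One must therefore enlarge $K$ to a sufficiently large multiple of $N$ in order to force $K>e\hr_N(z)\hs_N(z)$. Then $K/(\hr_N(z)\hs_N(z))$ is bounded above and below by positive constants, both logarithms defining $\ell(K)$ are $\Theta(1)$, and $\ell(K)\asymp N$, so Theorem \ref{main concentration} yields concentration on the gaussian scale $\sqrt{N}$; the restriction $t>1$ in the statement absorbs the constant ratio between $\ell(K)$ and $N$ into $K_5$. The only genuinely delicate point is this very degeneration of the inner logarithm in part (ii): but it is benign, being sidestepped precisely by the freedom to choose $K$ large.
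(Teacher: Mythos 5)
Your proof is correct and follows the same route as the paper: apply Theorem~\ref{main concentration} with the $\rho_N,\sigma_N$ bounds from Proposition~\ref{rNsN} and the Poincar\'e inequality from Proposition~\ref{Poinc} (together with its point-to-point analog). Your discussion of the degeneration of the inner logarithm in part~(ii) is exactly the right observation, and the sanity check that one may enlarge $K$ without harm is well placed. One minor arithmetic slip: since $I_+^\alpha$ lives in a single time slice, $|I_+^\alpha|\asymp N^{\alpha d}$, so $r_Ns_N=O(N^{1-\alpha d/4})$, not $O(N^{1-(d+1)\alpha/4})$; this does not affect the argument since any exponent strictly less than $1$ suffices.
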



We can now prove the first main theorem.

\begin{proof}[Proof of Theorem \ref{expconc}.]
We start by obtaining an $a.s.$ upper and lower bound on $\log Z_{N,\omega}$. Loosely, for the lower bound we consider a point $(\lfloor N^\alpha \rfloor,x)\in I_+^\alpha$ and we force the polymer started at $(0,0)$ to pass through that point; the energy accumulated by the first part of the polymer, i.e. $\sum_{i=1}^{\lfloor N^\alpha \rfloor} \omega_{i,x_i} $, is then bounded below by the minimum energy that the polymer could accumulate during its first $\lfloor N^\alpha \rfloor$ steps. More precisely, we define
\begin{eqnarray*}
\text{M}_{N,\omega}^{n_1,n_2}:=\max\{| \omega_{n,x} | \colon n_1\leq n \leq n_2,\, |x|_\infty \leq N\},
\end{eqnarray*}
and then bound below by the minimum possible energy:
\begin{eqnarray*}
\sum_{i=1}^{\lfloor N^\alpha \rfloor} \omega_{i,x_i}\geq -N^\alpha \M_{N^\alpha,\omega}^{0,N^{\alpha}}.
\end{eqnarray*}
Letting
\begin{eqnarray*}
\M^+_{N,\omega}:=N^{\alpha}\log (2d)+\beta N^{\alpha} \left( \M_{N^\alpha,\omega}^{0,N^{\alpha}}+ \M_{N+N^\alpha,\omega}^{N,N+N^{\alpha}}\right)
\end{eqnarray*}
we then get that
\begin{eqnarray}\label{l bound 1}
\log Z_{N,\omega}&\geq & \log E\left[ e^{\beta\sum_{i=\lfloor N^\alpha \rfloor+1}^N \omega_{i,x_i} }\big|\ X_{\lfloor N^\alpha \rfloor}=x \right] +\log P(X_{\lfloor N^\alpha \rfloor}=x) -\beta N^{\alpha} \,\M_{N^{\alpha},\omega}^{0,N^{\alpha}}\nonumber\\
&\geq& \log Z_{N,\omega}^{(\lfloor N^\alpha \rfloor,x)} - \M^+_{N,\omega}.
\end{eqnarray}
Averaging \eqref{l bound 1} over $x\in I_+^\alpha$ yields
\begin{eqnarray}\label{lower bound}
\log Z_{N,\omega}&\geq &
\overline{F}_{N,\omega}^{I_+^\alpha}
- \M_{N,\omega}^+.
\end{eqnarray} 
In a related fashion we can obtain an upper bound on $\log Z_{N,\omega}$. In this case we start the polymer from a location $(-\lfloor N^\alpha \rfloor,x)\in I_-^\alpha$ and we force it to pass through $(0,0)$. Letting
\begin{eqnarray}
\M_{N,\omega}^-:=N^\alpha \log(2d)+\beta N^\alpha \left( \M_{N^{\alpha},\omega}^{-N^\alpha,0} +\M_{N,\omega}^{N-N^\alpha,N}\right),
\end{eqnarray}
we then have analogously to \eqref{l bound 1} that
\begin{eqnarray}\label{u bound 1}
\log Z_{N,\omega}^{(-\lfloor N^\alpha \rfloor,x)}\geq \log Z_{N,\omega}-\M_{N,\omega}^-,
\end{eqnarray}
so that, averaging over $I_-^\alpha$,
\begin{eqnarray}\label{upper bound}
\log Z_{N,\omega}&\leq &
\overline{F}_{N,\omega}^{I_-^\alpha}
+ \M_{N,\omega}^-.
\end{eqnarray}
Using the fact that $\overline{F}_{N,\omega}^{I_+^\alpha}$ and $\mathbb{E} \overline{F}_{N,\omega}^{I_-^\alpha}$ have the same distribution, and $\mathbb{E}\log Z_{N,\omega}=\mathbb{E} \overline{F}_{N,\omega}^{I_+^\alpha}=\mathbb{E} \overline{F}_{N,\omega}^{I_-^\alpha}$ we obtain from \eqref{lower bound} and \eqref{upper bound} that
\begin{align}\label{interpolation}
\mathbb{P}&\left( \left| \log Z_{N,\omega}-\mathbb{E}\log Z_{N,\omega}\right|>t\sqrt{\ell(N)}\right)\\
&\leq \mathbb{P}\left(  \overline{F}_{N,\omega}^{I_-^\alpha} -\mathbb{E}\overline{F}_{N,\omega}^{I_-^\alpha}+M_{N,\omega}^- >t\sqrt{\ell(N)}\right)
 +
\mathbb{P}\left( \overline{F}_{N,\omega}^{I_+^\alpha}-\mathbb{E}\overline{F}_{N,\omega}^{I_+^\alpha} -M_{N,\omega}^+<-t\sqrt{\ell(N)}\right) \nonumber\\
&\leq \mathbb{P}\left(  \overline{F}_{N,\omega}^{I_-^\alpha} -\mathbb{E}\overline{F}_{N,\omega}^{I_-^\alpha} >\frac{1}{2}t\sqrt{\ell(N)}\right)
 +
 \mathbb{P}\left( \overline{F}_{N,\omega}^{I_+^\alpha}-\mathbb{E}\overline{F}_{N,\omega}^{I_+^\alpha}<-\frac{1}{2}t\sqrt{\ell(N)}\right)\nonumber\\
&\qquad + \mathbb{P}\left( M_{N,\omega}^+>\frac{1}{2}t\sqrt{\ell(N)}\right) +
  \mathbb{P}\left( M_{N,\omega}^->\frac{1}{2}t\sqrt{\ell(N)}\right)\nonumber \\
&=  \mathbb{P}\left( |\overline{F}_{N,\omega}^{I_+^\alpha}-\mathbb{E}\overline{F}_{N,\omega}^{I_+^\alpha}|>\frac{1}{2}t\sqrt{\ell(N)}\right)
 + \mathbb{P}\left( M_{N,\omega}^+>\frac{1}{2}t\sqrt{\ell(N)}\right) 
 +\mathbb{P}\left( M_{N,\omega}^->\frac{1}{2}t\sqrt{\ell(N)}\right).\nonumber
\end{align}
For $N\geq N_0(\beta,\nu)$, Proposition \ref{avgconc}(i) guarantees that the first term on the right side in \eqref{interpolation} is bounded by $8e^{-K_4t/2}$. The second and the third terms are similar so we consider only the second one. If $t>1$, then for some $K_6$, for large $N$,
\begin{eqnarray*}
\mathbb{P}\left( M_{N,\omega}^+>\frac{1}{2}t\sqrt{\ell(N)}\right)
&\leq&
K_6N^{1+\alpha} \mathbb{P}\left(|\omega_{0,0}|>\frac{t}{8\beta} N^{-\alpha}\sqrt{\frac{N}{\log N}}\right)\\
&\leq&K_6N^{1+\alpha}\exp\left(-\frac{t}{8} \frac{N^{\frac{1}{2}-\alpha}}{\sqrt{\log N}}\right) 
\mathbb{E}\left[e^{\beta|\omega|}\right]\\
&\leq& \exp\left(-\frac{t}{16} \frac{N^{\frac{1}{2}-\alpha}}{\sqrt{\log N}}\right) .
\end{eqnarray*}
 Putting the estimates together we get from \eqref{interpolation} that for some $K_7$,
\begin{eqnarray} \label{three_est}
\mathbb{P}\left( \left| \log Z_{N,\omega}-\mathbb{E}\log Z_{N,\omega}\right|>t\sqrt{\ell(N)}\right)
&\leq&  10e^{-K_7t}
\end{eqnarray}
for all $N$ large (say $N\geq N_2(\beta,\nu)\geq N_0(\beta,\nu)$) and $t>1$.   For $t\leq 1$, \eqref{three_est} is trivially true if we take $K_7$ small enough.  This completes the proof for $N\geq N_2$.  

For $2 \leq N<N_2$ an essentially trivial proof suffices.  Fix any (nonrandom) path $(y_n)_{n\leq N}$ and let $T_N = \sum_{n=1}^N \omega_{n,y_n}$, so that $Z_{N,\omega} \geq (2d)^{-N}e^{\beta T_N}$.  Let $K_8 = N_2\log 2d + \max_{N<N_2} \E\log Z_{N,\omega}$, $K_9 = \min_{N<N_2} \E\log Z_{N,\omega}$ and $K_{10} = \max_{N<N_2} \E Z_{N,\omega}$. Then for some $K_{11},K_{12}$,
\[
  \mathbb{P}\left( \log Z_{N,\omega}-\mathbb{E}\log Z_{N,\omega} < -t\sqrt{\frac{N}{\log N}}\right) \leq \mathbb{P}(\beta T_N <K_8-t)
    \leq K_{11}e^{-K_{12}t}
\]
and by Markov's inequality,
\[
  \mathbb{P}\left( \log Z_{N,\omega}-\mathbb{E}\log Z_{N,\omega} > t\sqrt{\frac{N}{\log N}}\right) \leq \mathbb{P}(Z_{N,\omega}>e^{K_9 + t})
    \leq K_{10}e^{-K_9-t}.
\]
The theorem now follows for these $N\geq 2$.
\end{proof}

\section{Subgaussian rates of convergence}\label{proofrates}

In this section we prove Theorem \ref{ratemain}.
We start with the simple observation that $\E\log Z_{N,\omega}$ is superadditive:
\begin{equation} \label{superadd}
  \E\log Z_{N+M,\omega} \geq \E\log Z_{N,\omega} + \E\log Z_{M,\omega},
\end{equation}
which by standard superadditivity results implies that the limit in \eqref{pbeta} exists, with
\begin{equation} \label{frombelow}
  \lim_{N\to\infty} \frac{1}{N}\E\log Z_{N,\omega} = \sup_N \frac{1}{N}\E\log Z_{N,\omega}.
\end{equation}

  Let $\bbL^{d+1}$ be the even sublattice of $\ZZ^{d+1}$:
\[
  \bbL^{d+1} = \{(n,x) \in \ZZ^{d+1}: n+x_1 + \dots + x_d \text{ is even}\}.
  \]
Let $H_N = \{(N,x): x \in \ZZ^d\} \cap \bbL^{d+1}$ and for $l<m$ and $(l,x),(m,y) \in \bbL^{d+1}$ define
\begin{eqnarray*}
Z_{m-l,\omega}((l,x)(m,y))=E_{l,x}\left[e^{\beta\sum_{n=l+1}^m\omega_{n,x_n}};x_m=y\right].
\end{eqnarray*}
 Recall the notation \eqref{ZN} for a polymer in a shifted disorder. 

The following lemma will be used throughout. Its proof follows the same lines as  (\cite{Al11}, Lemma 2.2(i)) and analogously to that one it is a consequence of Theorem \ref{expconc} for part (i), and Proposition \ref{avgconc}(ii) for part (ii).

\begin{lemma} \label{sums}
Let $\nu$ be nearly gamma.  There exists $K_{13}$ as follows.  Let $n_{\max} \geq 1$ and let $0 \leq s_1<t_1 \leq s_2 < t_2 < \dots \leq s_r < t_r$ with $t_j-s_j \leq n_{\max}$ for all $j \leq r$.  For each $j \leq r$ let $(s_j,y_{j}) \in H_{s_j}$ and $(t_j,z_{j}) \in H_{t_j}$, and let 
\[
  \zeta_j = \log Z_{t_j-s_j,\omega}((s_j,x_j)(t_j,y_j)), \qquad \chi_j = \log Z_{t_j-s_j,\omega}^{(s_j,x_j)}.
  \]
Then for $a>0$, we have the following.

(i)
\begin{equation}
  \P\left( \sum_{j=1}^r |\chi_j - \mathbb{E}\chi_j| > 2a\right) \leq 2^{r+1} \exp\left( -K_{13}a\left( \frac{ \log n_{\max}}{n_{\max}} \right)^{1/2} \right),
  \end{equation}
  
(ii)
\begin{equation} \label{weak2side}
  \P\left( \sum_{j=1}^r |\zeta_j - \mathbb{E}\zeta_j| > 2a\right) \leq 2^{r+1} \exp\left( -\frac{K_{13}a}{n_{\max}^{1/2}} \right),
  \end{equation}
  
(iii)
\begin{equation} \label{oneside}
  \P\left( \sum_{j=1}^r (\zeta_j - \mathbb{E}\chi_j)_+ > 2a\right) \leq 2^{r+1} \exp\left( -K_{13}a\left( \frac{ \log n_{\max}}{n_{\max}} \right)^{1/2} \right),
  \end{equation}
\end{lemma}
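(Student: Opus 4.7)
The plan is to exploit the fact that the intervals $[s_j,t_j]$ are disjoint, so the $\chi_j$ (and the $\zeta_j$) depend on disjoint blocks of the disorder $\omega$ and are therefore mutually independent. The bounds in (i)--(iii) then follow from a Chernoff-type argument for sums of independent, centered, subexponential random variables, with the prefactor $2^{r+1}$ arising naturally from a decomposition of $\sum |\cdot|$ according to signs.

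For part (i), set $X_j = \chi_j - \E\chi_j$. First,
\[
  \P\Bigl(\textstyle\sum_j |X_j|>2a\Bigr) \leq \P\Bigl(\textstyle\sum_j (X_j)_+>a\Bigr)+\P\Bigl(\textstyle\sum_j (X_j)_->a\Bigr),
\]
gaining a factor of $2$. For the positive piece, $\{\sum_j(X_j)_+>a\}\subseteq \bigcup_{S\subseteq\{1,\dots,r\}}\{\sum_{j\in S}X_j>a\}$ (take $S=\{j:X_j>0\}$), so a further union bound over the $2^r$ subsets yields the full prefactor $2^{r+1}$. It thus suffices to show, uniformly in $S$, that
\[
  \P\Bigl(\textstyle\sum_{j\in S}X_j>a\Bigr) \leq \exp\!\Bigl(-K_{13}a\sqrt{\log n_{\max}/n_{\max}}\Bigr).
\]
Theorem \ref{expconc} provides the tail $\P(|X_j|>t)\leq K_0 e^{-K_1 t/\sigma_j}$ with $\sigma_j := \sqrt{(t_j-s_j)/\log(t_j-s_j)} \leq \sigma := \sqrt{n_{\max}/\log n_{\max}}$, using that $N\mapsto N/\log N$ is increasing on $N\geq 3$. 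A standard computation combining this tail with the centering $\E X_j=0$ produces a quadratic MGF bound $\E e^{\lambda X_j}\leq \exp(C\lambda^2\sigma^2)$ valid for $|\lambda|\leq c/\sigma$. Chernoff and independence then give
\[
  \P\Bigl(\textstyle\sum_{j\in S}X_j>a\Bigr) \leq \exp\!\bigl(-\lambda a+C|S|\sigma^2\lambda^2\bigr),
\]
and optimizing $\lambda\in[0,c/\sigma]$ yields the usual Bernstein dichotomy: Gaussian decay $\exp(-c'a^2/(r\sigma^2))$ when $a\lesssim r\sigma$ and exponential decay $\exp(-c'a/\sigma)$ when $a\gtrsim r\sigma$. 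In the exponential regime this is exactly the desired bound; in the Gaussian regime, the prefactor $2^{r+1}$ already dominates $e^{K_{13}a/\sigma}$ for a suitably small $K_{13}$, so the claim is trivial.

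Part (ii) has the same structure, but one substitutes Proposition \ref{avgconc}(ii) for Theorem \ref{expconc}; the point-to-point partition function is only controlled on the Gaussian scale $\sqrt{n_{\max}}$ (the logarithmic gain is unavailable because the endpoint constraint breaks the averaging over $I_\pm^\alpha$ from Section \ref{proofcon}), so running the same argument with $\sigma=\sqrt{n_{\max}}$ yields \eqref{weak2side}. Part (iii) follows from monotonicity: imposing an endpoint constraint can only decrease the partition function, so $\zeta_j\leq \chi_j$ a.s., and hence $(\zeta_j-\E\chi_j)_+ \leq (\chi_j-\E\chi_j)_+ \leq |\chi_j-\E\chi_j|$; summing reduces the one-sided bound (iii) directly to the positive-part argument from (i).

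The main technical ingredient is turning the tail bounds from Theorem \ref{expconc} and Proposition \ref{avgconc}(ii) into the quadratic MGF bound $\E e^{\lambda X_j}\leq \exp(C\lambda^2\sigma^2)$; some care is required because the individual $X_j$ carry different natural scales $\sigma_j$, which must be controlled uniformly by $\sigma$ to reach the clean dependence on $n_{\max}$ alone. Everything else is bookkeeping.
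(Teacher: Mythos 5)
Your proof is correct and uses exactly the ingredients the paper points to: part (i) via the subgaussian concentration of Theorem \ref{expconc}, part (ii) via the $\sqrt{N}$-scale concentration of Proposition \ref{avgconc}(ii), and part (iii) by the monotonicity $\zeta_j \leq \chi_j$. The paper itself only sketches the proof by referring to Lemma 2.2(i) of \cite{Al11}, and your reconstruction --- independence of the blocks from the disjointness of $(s_j,t_j]$, a subexponential MGF bound $\E e^{\lambda X_j} \leq e^{C\lambda^2\sigma^2}$ for $|\lambda| \lesssim 1/\sigma$ with $\sigma = \sqrt{n_{\max}/\log n_{\max}}$ (respectively $\sqrt{n_{\max}}$), Chernoff plus a union bound over $2^r$ sign patterns, and the trivial observation that in the Gaussian regime the prefactor $2^{r+1}$ makes the claimed bound vacuous --- is the standard way such a lemma is proved and is consistent with what the reference does. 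The only minor points you could flag more carefully are (a) Proposition \ref{avgconc}(ii) gives its tail bound only for $t>1$ and $N \geq N_1$, so a constant adjustment is needed to cover small $t$ and small $t_j-s_j$; and (b) for $n_{\max} \in \{1,2\}$ the factor $\sqrt{\log n_{\max}/n_{\max}}$ degenerates, but there the claimed bound is near-trivial anyway. These are cosmetic, not gaps.
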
 

Note (iii) follows from (i), since $\zeta_j \leq \chi_j$.  We do not have a bound like \eqref{oneside}, with factor $(\log n_{\max})^{1/2}$, for the lower tail of the $\zeta_j$'s, but for our purposes such a bound is only needed for the upper tail, as \eqref{weak2side} suffices for lower tails.

We continue with a result which is like Theorem \ref{ratemain} but weaker (not subgaussian) and much simpler. Define the set of paths from the origin
\[
  \Gamma_N = \{ \{(i,x_i)\}_{i\leq N}:  x_0=0,|x_i-x_{i-1}|_1=1 \text{ for all } i\}.
\]
For a specified block length $n$, and for $N=kn$, the \emph{simple skeleton} of a path in $\Gamma_N$ is $\{(jn,x_{jn}):0 \leq j \leq k\}$.  Let $\mC_s$ denote the class of all possible simple skeletons of paths from $(0,0)$ to $(kn,0)$ and note that
\begin{equation} \label{Cs}
  |\mC_s| \leq (2n)^{dk}.
\end{equation}
For a skeleton $\mS$ (of any type, including simple and types to be introduced below), we write $\Gamma_N(\mS)$ for the set of all paths in $\Gamma_N$ which pass through all points of $\mS$.  For a set $\mA$ of paths of length $N$ we set
\[
  Z_{N,\omega}(\mA) = E\left( e^{\beta\sum_{i=1}^N \omega_{i,x_i}} 1_{\mA} \right),
\]
and we write $Z_{N,\omega}(\mS)$ for $Z_{N,\omega}(\Gamma_N(\mS))$.

\begin{lemma}\label{rateweak} 
  Suppose $\nu$ is nearly gamma.  Then there exists $K_{14}$ such that
  \begin{equation} \label{rate3}
    \E\log Z_{n,\omega} \geq p(\beta)n - K_{14}n^{1/2}\log n \quad \text{for all } n \geq 2.
  \end{equation}
\end{lemma}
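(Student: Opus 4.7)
The plan is to leverage \eqref{frombelow}, which gives $p(\beta)=\sup_N N^{-1}\E\log Z_{N,\omega}$, by establishing an upper bound of the form
\[
  \E\log Z_{Nn,\omega}\ \leq\ N\,\E\log Z_{n,\omega}+C\,N\sqrt{n\log n},
\]
valid for all $N\geq 1$ with $C=C(\beta,\nu)$. Dividing by $Nn$ and letting $N\to\infty$ then yields $p(\beta)\leq n^{-1}\E\log Z_{n,\omega}+C\sqrt{(\log n)/n}$, which rearranges to \eqref{rate3} (in fact with the slightly sharper $\sqrt{n\log n}$ in place of $\sqrt{n}\log n$). So everything reduces to the displayed block inequality.

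To prove it I would coarse-grain a path of length $Nn$ into $N$ blocks of length $n$ using its simple skeleton $\mS=\{(jn,y_j):0\leq j\leq N\}$ with $y_0=0$ and $y_N$ unrestricted. Writing $Z_{Nn,\omega}(\mS)$ for the contribution of paths having skeleton $\mS$, the blocks decouple:
\[
  Z_{Nn,\omega}(\mS)=\prod_{j=1}^{N}Z_{n,\omega}\bigl(((j-1)n,y_{j-1}),(jn,y_j)\bigr).
\]
Since consecutive skeleton points differ by at most $n$ nearest-neighbor steps, the number of admissible skeletons is at most $(C_0 n)^{dN}$ for a dimensional constant $C_0$, hence
\[
  \log Z_{Nn,\omega}\ \leq\ dN\log(C_0 n)+\max_{\mS}\sum_{j=1}^{N}\zeta_{j}^{\mS},
\]
where $\zeta_j^{\mS}$ is the logarithm of the $j$-th block's point-to-point partition function. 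For each fixed $\mS$, Lemma \ref{sums}(iii) applied to the $N$ consecutive non-overlapping blocks controls the one-sided sum $\sum_j(\zeta_j^{\mS}-\E\chi_j)_+$ on the subgaussian scale $\sqrt{n/\log n}$, with $\E\chi_j=\E\log Z_{n,\omega}$ by translation invariance. A union bound over skeletons followed by integration of the resulting tail converts this into an $O(N\sqrt{n\log n})$ bound on $\E\max_{\mS}\sum_j(\zeta_j^{\mS}-\E\chi_j)_+$, and combining with the $dN\log(C_0n)$ entropy term (of lower order once $n\geq 2$) yields the claimed block inequality.

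The one genuinely delicate point, and the reason the subgaussian scale of Lemma \ref{sums}(iii) is indispensable here, is the balancing of two competing logarithms. The skeleton entropy contributes $O(N\log n)$ to the exponent in the union bound, so to drive the overall probability down one must take the deviation $a$ to satisfy $a\sqrt{(\log n)/n}\gtrsim N\log n$, forcing $a\sim N\sqrt{n\log n}$; the factor $\sqrt{(\log n)/n}$ in Lemma \ref{sums}(iii) is exactly what converts this requirement into an $o(Nn)$ correction. A merely Gaussian-scale concentration ($\sqrt{n}$ in place of $\sqrt{n/\log n}$) would yield only a trivial $O(Nn)$ correction and the whole argument would collapse. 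The remaining ingredients — that the integrated upper tail past the threshold contributes only $O(\sqrt{n/\log n})$ on average, that translation invariance identifies $\E\chi_j$ with $\E\log Z_{n,\omega}$, and that small $n$ are absorbed into $K_{14}$ by crude bounds — are routine.
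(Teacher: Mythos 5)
Your proof is correct and in fact slightly sharper, yielding $\E\log Z_{n,\omega} \geq p(\beta)n - K_{14}(n\log n)^{1/2}$ rather than $n^{1/2}\log n$. It follows the same blocking-and-skeleton strategy as the paper's proof but differs in two substantive ways. First, the concentration input: the paper applies Lemma~\ref{sums}(ii), the two-sided bound for the point-to-point terms $\zeta_j$, which only carries the Gaussian scale $n^{1/2}$ and so forces a deviation threshold of order $k n^{1/2}\log n$ after the union bound over $(2n)^{dk}$ skeletons; you observe that only the upper tail is needed, write $\zeta_j\leq\E\chi_j+(\zeta_j-\E\chi_j)_+$, and invoke Lemma~\ref{sums}(iii) (equivalently, Theorem~\ref{expconc} for the free-endpoint $\chi_j$ together with $\zeta_j\leq\chi_j$), whose subgaussian scale $(n/\log n)^{1/2}$ cuts the threshold to $N(n\log n)^{1/2}$. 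Second, the passage to $p(\beta)$: the paper intersects a high-probability upper bound on $\log Z_{kn,\omega}$ with the a.s.\ lower bound from \eqref{aslimit} and extracts a deterministic inequality; you instead integrate the union-bounded tail to bound $\E\max_{\mS}\sum_j(\zeta_j^{\mS}-\E\chi_j)_+$ by $O(N(n\log n)^{1/2})$, take expectations in the skeleton decomposition, and appeal directly to $p(\beta)=\lim_N(Nn)^{-1}\E\log Z_{Nn,\omega}$ from \eqref{pbeta}. Your route avoids the a.s.\ limit entirely and gives the better rate; the paper's weaker $\log n$ factor is evidently chosen because it suffices for every downstream use of \eqref{rate3}, where it is always dominated by $\theta(n)=(\log n)^{5/2}$ (cf.\ \eqref{straight}, \eqref{meandiff}).
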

\begin{proof}
It is sufficient to prove the inequality in \eqref{rate3} for sufficiently large $n$.
Fix $n$ and consider paths of length $N=kn$.  For each $\mS = \{(jn,x_{jn}):0 \leq j \leq k\} \in \mC_s$ we have 
\begin{equation} \label{blocks}
  \E\log Z_{N,\omega}(\mS) = \sum_{j=1}^k \E\log Z_{n,\omega}\bigg( ((j-1)n,x_{(j-1)n}),(jn,x_{jn}) \bigg) \leq k\E\log Z_{n,\omega}.
\end{equation}
By Lemma \ref{sums}(ii) (note $K_{13}$ is defined there),
\begin{align} \label{eachS}
  \P\bigg( \log Z_{N,\omega}(\mS) - \E\log Z_{N,\omega}(\mS) \geq 16dK_{13}^{-1}kn^{1/2}\log n \bigg) \leq 2^{k+1}e^{-8dk\log n},
\end{align}
so by \eqref{Cs},
\begin{align} \label{allS}
  \P\bigg( &\log Z_{N,\omega}(\mS) - \E\log Z_{N,\omega}(\mS) \geq 16dK_{13}^{-1}kn^{1/2}\log n \text{ for some } \mS \in \mC_s \bigg) 
    \leq e^{-4dk\log n}.
\end{align}
Combining \eqref{Cs},\eqref{blocks} and \eqref{allS} we see that with probability at least  $1-e^{-4dk\log n}$ we have 
\begin{align} \label{ceiling}
  \log Z_{kn,\omega} &= \log \left( \sum_{\mS \in \mC_s} Z_{N,\omega}(\mS) \right) \notag\\
  &\leq dk\log(2n) + k\E\log Z_{n,\omega} + 16dK_{13}^{-1}kn^{1/2}\log n.
\end{align}
But by \eqref{aslimit}, also with probability approaching 1 as $k\to \infty$ (with $n$ fixed), we have
\begin{equation} \label{nlimit}
  \log Z_{kn,\omega} \geq knp(\beta) - k
\end{equation}
which with \eqref{ceiling} shows that
\[
  \E\log Z_{n,\omega} \geq np(\beta) -1 - d\log(2n) - 16dK_{13}^{-1}n^{1/2}\log n.
\]
\end{proof}

The proof of Theorem \ref{ratemain} follows the general outline of the preceding proof.  But to obtain that (stronger) theorem, we need to sometimes use Lemma \ref{sums}(i),(iii) in place of (ii), and use a coarse-graining approximation effectively to reduce the size of \eqref{Cs}, so that we avoid the $\log n$ in the exponent on the right side of \eqref{allS}, and can effectively use $\log\log n$ instead.

For $(n,x)\in\bbL^{d+1}$ let 
\[
  s(n,x) = np(\beta) - \E\log Z_{n,\omega}(x), \qquad s_0(n) = np(\beta) - \E\log Z_{n,\omega}.
\]
so $s(n,x) \geq 0$ by \eqref{frombelow}.  $s(n,x)$ may be viewed as a measure of the inefficiency created when a path makes an increment of $(n,x)$.  As in the proof of Lemma \ref{rateweak}, we consider a polymer of length $N=kn$ for some block length $n$ to be specified and $k \geq 1$.  In general we take $n$ sufficiently large, and then take $k$ large, depending on $n$; we tacitly take $n$ to be even, throughout. In addition to \eqref{superadd} we have the relation
\[
  Z_{n+m,\omega}(x+y) \geq Z_{n,\omega}(x)Z_{m,\omega}^{(n,x)}(y) \qquad \text{for all $x,y,z \in \ZZ^d$ and all } n,m\geq 1,
\]
which implies that $s(\cdot,\cdot)$ is subadditive.  Subadditivity of $s_0$ follows from \eqref{superadd}.

Let
\begin{equation} \label{slowlyvar}
  \rho(m) = \frac{\log \log m}{K_{13}(\log m)^{1/2}}, \quad \theta(m) = (\log m)^{5/2}, \quad \text{and} \quad \varphi(m) = \lfloor (\log m)^3 \rfloor.
  \end{equation}
For our designated block length $n$, for $x \in \ZZ^d$ with $(n,x) \in \bbL^d$, we say the transverse increment $x$ is \emph{inadequate} if $s(n,x) > n^{1/2}\theta(n)$, and \emph{adequate} otherwise.  
Note the dependence on $n$ is suppressed in this terminology.  For general values of $m$, we say $(m,x)$ is \emph{efficient} is $s(m,x) \leq 4n^{1/2}\rho(n)$, and \emph{inefficient} otherwise; again there is a dependence on $n$.  For $m=n$, efficiency is obviously a stronger condition than adequateness.  In fact, to prove Theorem \ref{ratemain} it is sufficient to show that for large $n$, there exists $x$ for which $(n,x)$ is efficient.  

Let
\[
  h_n = \max\{|x|_\infty: x \text{ is adequate}\}.
  \]
(Note we have not established any monotonicity for $s(n,\cdot)$, so some sites $x$ with $|x|_\infty\leq h_n$ may be inadequate.)  We wish to coarse-grain on scale $u_n = 2\lfloor h_n/2\varphi(n) \rfloor$.  A \emph{coarse-grained} (or \emph{CG}) \emph{point} is a point of form $(jn,x_{jn})$ with $j \geq 0$ and $x_{jn} \in u_n\ZZ^d$.
A \emph{coarse-grained} (or \emph{CG}) \emph{skeleton} is a simple skeleton $\{(jn,x_{jn}):0 \leq j \leq k\}$ consisting entirely of CG points.  By a \emph{CG path} we mean a path from $(0,0)$ to $(kn,0)$ for which the simple skeleton is a CG skeleton.  

\begin{remark} \label{strategy}
A rough strategy for the proof of Theorem \ref{ratemain} is as follows; what we actually do is based on this but requires certain modifications.  It is enough to show that for some $K_{15}$, for large $n$, $s(n,x) \leq K_{15}n^{1/2}\rho(n)$ for some $x$.  Suppose to the contrary $s(n,x) > K_{15}n^{1/2}\rho(n)$ for all $x$; this means that for every simple skeleton $\mS$ we have 
\[
  \E\log Z_{kn,\omega}(\mS) \leq knp(\beta) - kK_{15}n^{1/2}\rho(n).
\]
The first step is to use this and Lemma \ref{sums} to show that, if we take $n$ then $k$ large, with high probability 
\[
  \log Z_{kn,\omega}(\hat{\mS}) \leq knp(\beta) - \half kK_{15}n^{1/2}\rho(n) \quad \text{for every CG skeleton } \hat{\mS};
  \]
this makes use of the fact that the number of CG skeletons is much smaller than the number of simple skeletons.  The next step is to show that with high probability, every simple skeleton $\mS$ can be approximated by a CG skeleton $\hat{\mS}$ without changing $\log Z_{kn,\omega}(\mS)$ too much, and therefore 
\[
  \log Z_{kn,\omega}(\mS) \leq knp(\beta) - \frac{1}{4} kn^{1/2}K_{15}\rho(n)  \quad \text{for every simple skeleton } \mS.
  \]
The final step is to sum $Z_{kn,\omega}(\mS)$ over simple skeletons $\mS$ (of which there are at most $(2n)^{dk}$) to obtain 
\[
  \log Z_{kn,\omega} \leq dk \log 2n + knp(\beta) - \frac{1}{4} kn^{1/2}K_{15}\rho(n).
\]
Dividing by $kn$ and letting $k\to\infty$ gives a limit which contradicts \eqref{aslimit}; this shows efficient values $x$ must exist.
\end{remark}

We continue with the proof of Theorem \ref{ratemain}.  Let
\[
  \hat{H}_N = \{x \in \ZZ^d: (N,x) \in H_N, |x|_1 \leq N\};
\]
when $N$ is clear from the context we refer to points $x\in \hat{H}_N$ as \emph{accessible sites}.  Clearly $|\hat{H}_N| \leq (2N)^d$.

\begin{lemma} \label{unexcess}
(i) There exists $K_{16}$ such that for all $n\geq 2$, $s(n,0) \leq K_{16}n^{1/2}\log n$.

(ii) There exists $K_{17}$ such that for $n$ large (depending on $\beta$) and even, if $|x|_1 \leq K_{17} n^{1/2}\theta(n)$ then $x$ is adequate.
\end{lemma}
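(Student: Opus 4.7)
The plan is to prove part (i) first and then deduce part (ii) by a short concatenation argument. The two main ingredients are Lemma \ref{rateweak} (which gives $\E\log Z_{n,\omega}\geq np(\beta) - K_{14}\sqrt{n}\log n$) and the point-to-point concentration of Proposition \ref{avgconc}(ii).

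For part (i), write $Z_{n,\omega} = \sum_{y\in \hat{H}_n} Z_{n,\omega}(y)$ and use $|\hat{H}_n|\leq (2n)^d$ to get
\[
  \log Z_{n,\omega} \leq d\log(2n) + \max_{y\in \hat{H}_n} \log Z_{n,\omega}(y).
\]
Proposition \ref{avgconc}(ii) gives $\P\bigl(|\log Z_{n,\omega}(y)-\E\log Z_{n,\omega}(y)|>t\sqrt{n}\bigr)\leq 8e^{-K_5 t}$ uniformly in $y\in\hat H_n$, so a union bound over the $\leq (2n)^d$ accessible endpoints followed by integration of the tail yields $\E\max_y\bigl[\log Z_{n,\omega}(y)-\E\log Z_{n,\omega}(y)\bigr] = O(\sqrt{n}\log n)$. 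Taking $\E$ in the previous display and invoking Lemma \ref{rateweak}, there exists a (deterministic) $y^{\ast}\in\hat H_n$ with $s(n,y^{\ast}) = O(\sqrt{n}\log n)$. To transfer this bound to the endpoint $0$, concatenate polymers at the midpoint $(n,y^{\ast})$:
\[
  Z_{2n,\omega}(0) \geq Z_{n,\omega}(y^{\ast})\cdot Z_{n,\omega}\bigl((n,y^{\ast}),(2n,0)\bigr).
\]
Reflection symmetry of the SRW and translation invariance of the environment give $\E\log Z_{n,\omega}(-y^{\ast}) = \E\log Z_{n,\omega}(y^{\ast})$ and $\E\log Z_{n,\omega}\bigl((n,y^{\ast}),(2n,0)\bigr) = \E\log Z_{n,\omega}(-y^{\ast})$, so taking $\E\log$ in the display yields $s(2n,0)\leq 2s(n,y^{\ast}) = O(\sqrt{n}\log n)$, which is the claim for the even index $2n$.

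For part (ii), set $n_1 := |x|_1$; the parity condition $(n,x)\in\bbL^{d+1}$ combined with $n$ even forces $n_1$ to be even, so $n-n_1$ is a valid index for applying part (i). Fix a monotone nearest-neighbor path $(y_i)_{i\leq n_1}$ from $0$ to $x$ (step along coordinate directions) and restrict the SRW in $Z_{n,\omega}(x)$ to paths that follow $(y_i)$ for the first $n_1$ steps and return to $x$ at time $n$:
\[
  Z_{n,\omega}(x) \geq (2d)^{-n_1}\,e^{\beta\sum_{i=1}^{n_1}\omega_{i,y_i}}\,Z_{n-n_1,\omega}\bigl((n_1,x),(n,x)\bigr).
\]
Taking $\E\log$ (using $\E\omega=0$ and shift invariance) and rearranging,
\[
  s(n,x) \leq n_1\bigl(p(\beta)+\log(2d)\bigr) + s(n-n_1,0).
\]
Bounding $s(n-n_1,0) \leq K_{16}\sqrt{n}\log n$ via part (i) and using $n_1\leq K_{17}\sqrt{n}\,\theta(n)$ gives
\[
  s(n,x) \leq K_{17}\sqrt{n}\,\theta(n)\bigl(p(\beta)+\log(2d)\bigr) + K_{16}\sqrt{n}\log n.
\]
Since $\log n/\theta(n) = (\log n)^{-3/2}\to 0$, for $K_{17}<1/\bigl(p(\beta)+\log(2d)\bigr)$ and $n$ sufficiently large the right-hand side is at most $\sqrt{n}\,\theta(n)$, so $x$ is adequate.

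The main obstacle is the first step of part (i): passing from the global lower bound on $\E\log Z_{n,\omega}$ supplied by Lemma \ref{rateweak} to a bound on a \emph{specific} point-to-point expectation $\E\log Z_{n,\omega}(y^{\ast})$. A priori the mass in $Z_{n,\omega}$ could be spread over endpoints so thinly that no single $\E\log Z_{n,\omega}(y)$ approaches $np(\beta)$; the point-to-point concentration of Proposition \ref{avgconc}(ii) is precisely what excludes this, allowing us to exchange $\E$ and $\max$ up to a $\sqrt{n}\log n$ error. Once a good $y^{\ast}$ is in hand, the remainder of (i) and all of (ii) are clean applications of subadditivity, symmetry, and the direct-path lower bound.
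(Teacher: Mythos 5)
Your part (i) follows essentially the paper's route: both compare $\E\log Z_{n,\omega}$ to $\max_y \E\log Z_{n,\omega}(y)$ using the point-to-point concentration of Proposition \ref{avgconc}(ii) together with Lemma \ref{rateweak}, then double up via the symmetry $\E\log Z_{n,\omega}(y) = \E\log Z_{n,\omega}(-y)$ and subadditivity. You phrase the ``reverse union bound'' as an $\E\max$ computation whereas the paper phrases it as a positive-probability event (and additionally invokes Theorem \ref{expconc}); these are interchangeable and your version is, if anything, a touch leaner.

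Your part (ii) is a genuinely different and simpler argument than the paper's. The paper repeats the probabilistic scheme of part (i): it decomposes $Z_{n-J,\omega}(0)$ over intermediate sites $x\in\hat H_J$, uses concentration twice more to locate a good intermediate $x$ with $\E\log Z_{n-2J,\omega}(x)$ close to $p(\beta)n$, and only then concatenates a short direct path from $(n-2J,x)$ to $(n,y)$. You instead bypass all of this with a purely deterministic superadditivity bound: a monotone direct path from $(0,0)$ to $(n_1,x)$ followed by the ``stay at $x$'' polymer from $(n_1,x)$ to $(n,x)$, which gives $s(n,x) \leq n_1(p(\beta)+\log 2d) + s(n-n_1,0)$ immediately, after which part (i) finishes it. The parity checks (that $n_1=|x|_1$ is even when $(n,x)\in\bbL^{d+1}$ and $n$ is even, and that $2n_1\leq n$ for large $n$) are handled correctly. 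Your route avoids a second invocation of Proposition \ref{avgconc}(ii), is shorter, and in fact yields a slightly better admissible constant $K_{17} < (p(\beta)+\log 2d)^{-1}$ compared to the paper's $(3p(\beta)+2\log 2d)^{-1}$. Both proofs are correct; yours buys simplicity at no cost.
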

\begin{proof}
We first prove (i).  It suffices to consider $n$ large.  Let $m=n/2$.  It follows from Proposition \ref{avgconc}(ii) that 
\begin{align} \label{eachx}
  \P&\left( \left| \log Z_{m,\omega}(x) - \E\log Z_{m,\omega}(x) \right| \geq 2dK_5^{-1}m^{1/2}\log m
    \text{ for some } x \in \hat{H}_m \right) \notag \\
  &\qquad \leq (2m)^de^{-2d\log m} \notag \\
  &\qquad \leq \half.
\end{align}
It follows from \eqref{eachx}, Theorem \ref{expconc} and Lemma \ref{rateweak} that with probability at least 1/4, for some accessible site $x$ we have 
\begin{align} \label{hyperfrac}
   \exp\left( \E\log Z_{m,\omega}(x) + 2dK_5^{-1}m^{1/2}\log m \right) &\geq Z_{m,\omega}(x) \notag\\
  &\geq \frac{1}{(2m)^d} Z_{m,\omega} \notag\\
  &\geq \frac{1}{(2m)^d} \exp\left( \E\log Z_{m,\omega} - m^{1/2} \right) \notag\\
  &\geq \exp\left( p(\beta)m - 2K_{14}m^{1/2}\log m \right),
\end{align}
and therefore we have the deterministic statement
\begin{equation}
  \E\log Z_{m,\omega}(x) \geq p(\beta)m - K_{18}m^{1/2}\log m.
\end{equation}
Then by symmetry and subadditivity,
\begin{equation} \label{0unexc}
  s(n,0) \leq s(m,x)+s(m,-x) \leq 2K_{18}n^{1/2}\log n.
\end{equation}

Turning to (ii), let $J=2\lfloor K_{19}n^{1/2}\theta(n) \rfloor$, with $K_{19}$ to be specified.  Analogously to \eqref{eachx} we have using Proposition \ref{avgconc}(ii) that for large $n$,
\begin{align} \label{eachx2}
  \P&\left( \left| \log Z_{n-2J,\omega}(-x) - \E\log Z_{n-2J,\omega}(-x) \right| \geq 2dK_5^{-1}n^{1/2}\log n 
    \text{ for some } x \in \hat{H}_J \right) \notag \\
  &\qquad \leq (2J)^d8 e^{-2d\log n} \notag \\
  &\qquad \leq \frac{1}{4}.
\end{align}
Similarly, also for large $n$,
\begin{align} \label{ratio}
  \P&\left( \log Z_{J,\omega}\big((n-2J,x),(n-J,0)\big) > 2p(\beta)J \text{ for some } x \in \hat{H}_J \right) \notag \\
  &\leq \P\left( \log Z_{J,\omega}\big((n-2J,x),(n-J,0)\big) - \E\log Z_{J,\omega}\big((n-2J,x),(n-J,0)\big) > p(\beta)J \text{ for some } x \in \hat{H}_J \right) \notag \\
  &\leq (2J)^d 8e^{-K_5p(\beta)J^{1/2}} \notag \\
  &<\frac{1}{4}.
\end{align}
Then analogously to \eqref{hyperfrac}, since
\[
  Z_{n-J,\omega}(0) = \sum_{x \in \hH_j} Z_{n-2J,\omega}(x)Z_{J,\omega}\big((n-2J,x),(n-J,0)\big),
\]
by \eqref{0unexc}---\eqref{ratio}, Proposition \ref{avgconc}(ii) and Lemma \ref{rateweak}, with probability at least 1/4, for some $x\in\hat{H}_J$ we have
\begin{align} \label{hyperfrac2}
  \exp&\left( \E\log Z_{n-2J,\omega}(x) + 2dK_5^{-1}n^{1/2}\log n \right) \notag\\
  &\geq Z_{n-2J,\omega}(x) \notag\\
  &\geq Z_{n-2J,\omega}(x)Z_{J,\omega}\big((n-2J,x),(n-J,0)\big)e^{-2p(\beta)J} \notag \\
  &\geq \frac{1}{|\hH_J|} Z_{n-J,\omega}(0)e^{-2p(\beta)J} \notag\\
  &\geq \frac{1}{(2J)^d} \exp\left( \E\log Z_{n-J,\omega}(0) - 2dK_5^{-1}n^{1/2}\log n - 2p(\beta)J \right) \notag\\
  &\geq \exp\left( p(\beta)n - 5p(\beta)K_{19}n^{1/2}\theta(n) \right),
\end{align}
and therefore
\begin{equation} \label{thetagap}
  \E\log Z_{n-2J,\omega}(x) \geq p(\beta)n - 6p(\beta)K_{19}n^{1/2}\theta(n).
\end{equation}
If $|y|_1 \leq J$, then $|y-x|_1 \leq 2J$, so there is a path $\{(i,x_i)\}_{n-2J \leq i \leq n}$ from $(n-2J,x)$ to $(n,y)$.  Therefore using \eqref{thetagap},
bounding $Z_{2J,\omega}\big((n-2J,x),(n,y)\big)$ below by the term corresponding to this single path we obtain
\begin{align}
  \E\log Z_{n,\omega}(y) &\geq \E\log Z_{n-2J,\omega}(x) + \E\log Z_{2J,\omega}\big((n-2J,x),(n,y)\big) \notag \\
  &\geq  \E\log Z_{n-2J,\omega}(x) - 2J\log 2d + \beta\E\sum_{i=n-2J+1}^n \omega_{i,x_i} \notag \\
  &= \E\log Z_{n-2J,\omega}(x) - 2J\log 2d  \notag \\
  &\geq p(\beta)n - K_{19}\big(6p(\beta)+4\log 2d \big)n^{1/2}\theta(n).
\end{align}
Taking $K_{19} = (6p(\beta) + 4 \log 2d )^{-1} $, this shows that $y$ is adequate whenever $|y|_1 \leq J$.
\end{proof}

Observe that for a simple skeleton $\mS = \{(jn,x_{jn}), j \leq k\}$, we have a sum over blocks:
\begin{equation} \label{skelsum}
 \log Z_{N,\omega}(\mS) = \sum_{j=1}^k \log Z_{n,\omega}\bigg(((j-1)n,x_{(j-1)n}),(jn,x_{jn})\bigg).
\end{equation}
The rough strategy outlined in Remark \ref{strategy} involves approximating $Z_{N,\omega}(\mS)$ by $Z_{N,\omega}(\hmS)$, where $\hmS$ is a CG skeleton which approximates the simple skeleton $\mS$; equivalently, we want to replace $x_{(j-1)n},x_{jn}$ in \eqref{skelsum} by CG points.  This may be problematic for some values of $j$ and some paths in $\Gamma_N(\mS)$, however, for three reasons.  First, if we do not restrict the possible increments to satisfy $|x_{jn} - x_{(j-1)n}|_\infty \leq h_n$, there will be too many CG skeletons to sum over.  Second, even when increments satisfy this inequality, there are difficulties if increments are inadequate.  Third, paths which veer to far off course transversally within a block present problems in the approximation by a CG path.  Our methods for dealing with these difficulties principally involve two things:  we do the CG approximation only for ``nice'' blocks, and rather than just CG skeletons, we allow more general sums of the form
\[
  \sum_{j=1}^l \log Z_{\tau_j-\tau_{j-1},\omega}((\tau_{j-1},y_{j}),(\tau_j,z_{j})),
\]
which need not have $y_{j}=z_{j-1}$.  We turn now to the details.

In approximating \eqref{skelsum} we want to in effect only change paths within a distance $n_1 \leq 6dn/\varphi(n)$ (to be specified) of each hyperplane $H_{jn}$.  To this end, given a site $w=(jn\pm n_1,y_{jn\pm n_1}) \in H_{jn\pm n_1}$, let $z_{jn}$ be the site in $u_n\ZZ^d$ closest to $y_{jn\pm n_1}$ in $\ell^1$ norm (breaking ties by some arbitrary rule), and let $\pi_{jn}(w) = (jn,z_{jn})$, which may be viewed as the projection into $H_{jn}$ of the CG approximation to $w$ within the hyperplane $H_{jn\pm n_1}$.  Given a path $\gamma=\{(i,x_{i}), i \leq kn\}$ from $(0,0)$ to $(kn,0)$, define points
\[
  d_j = d_j(\gamma) = (jn,x_{jn}), \quad 0 \leq j \leq k,
  \]
\[
  e_j = (jn+n_1,x_{jn+n_1}), \quad 0 \leq j \leq k-1,
  \]
\[
  f_j = (jn-n_1,x_{jn-n_1}), \quad 1 \leq j \leq k.
  \]
We say a \emph{sidestep} occurs in block $j$ in $\gamma$ if either
\[
  |x_{(j-1)n+n_1} - x_{(j-1)n}|_\infty > h_n \quad \text{or} \quad |x_{jn} - x_{jn-n_1}|_\infty > h_n.
  \]
Let 
\[
  \mE_{in} = \mE_{in}(\gamma) = \{1 \leq j \leq k:  x_{jn} - x_{(j-1)n} \text{ is inadequate}\},
  \]
\[
  \mE_{side} = \mE_{side}(\gamma) = \{1 \leq j \leq k:  j \notin \mE_{in} \text{ and a sidestep occurs in block } j\},
  \]
\[
  \mE = \mE_{in} \cup \mE_{side}
  \]
and let
\[
  e_{j-1}' = \pi_{(j-1)n}(e_{j-1}), \quad  f_j' = \pi_{jn}(f_j), \quad j \notin \mE.
  \]
Blocks with indices in $\mE$ are called \emph{bad blocks}, and $\mE$ is called the \emph{bad set}.  Define the tuples
\begin{equation} \label{Rjdef}
  \mT_j = \mT_j(\gamma) = \begin{cases} (d_{j-1},e_{j-1},f_j,d_j) &\text{if } j \in \mE,\\
    (e_{j-1}',f_j') &\text{if } j \notin \mE, \end{cases}
  \end{equation}
define the \emph{CG-approximate skeleton} of $\gamma$ to be
\[
  S_{CG}(\gamma) = \{ \mT_j: 1 \leq j \leq k\}
  \]
and define the \emph{CG-approximate bad} (respectively \emph{good) skeleton} of $\gamma$ to be
\[
  S_{CG}^{bad}(\gamma) = \{ \mT_j: j \in \mE\}, \qquad S_{CG}^{good}(\gamma) = \{ \mT_j: j \notin \mE\}.
  \]
Note $\mE_{in}(\gamma), \mE_{side}(\gamma), S_{CG}^{bad}(\gamma)$ and $S_{CG}^{good}(\gamma)$ are all functions of $S_{CG}(\gamma)$.  
We refer to the bad set $\mE$ also as the \emph{index set of} $S_{CG}^{bad}(\gamma)$.
Let $\mC_{CG}$ (respectively $\mC_{CG}^{bad}$) denote the class of all possible CG-approximate skeletons (respectively bad skeletons) of paths of length $kn$ starting at $(0,0)$.  For $B \subset \{1,\dots,k\}$ let 
$\mC_{CG}(B)$ denote the class of all CG-approximate skeletons in $\mC_{CG}$ with bad set $B$, and 
analogously, let $\mC_{CG}^{bad}(B)$ denote the class of all possible CG-approximate bad skeletons in $\mC_{CG}^{bad}$ with index set $B$.
Then for $b\leq k$ define
\[
  \mC_{CG}^{bad}(b) = \cup_{B:|B|=b}\ \mC_{CG}^{bad}(B).
\]
The partition function corresponding to a CG-approximate skeleton $\mS_{CG}$ is
\begin{align} \label{tZdef}
  \tZ_{N,\omega}(\mS_{CG}) &= \left( \prod_{j\notin\mE} Z_{n_1,\omega}(e_{j-1}',f_j') \right) \left( \prod_{j\in\mE} Z_{n_1,\omega}(d_{j-1},e_{j-1}) Z_{n-2n_1,\omega}(e_{j-1},f_j) 
    Z_{n_1,\omega}(f_j,d_j) \right).
\end{align}
So that we may consider these two products separately, we denote the first as $\tZ_{N,\omega}(\mS_{CG}^{good})$ and the second as $\tZ_{N,\omega}(\mS_{CG}^{bad})$.

For a CG-approximate skeleton in $\mC_{CG}(B)$, and for $j \notin B$, if $e_{j-1}'=(n(j-1),w), d_{j-1}=(n(j-1),x),f_j'=(nj,y)$ and $d_j=(nj,z)$, we always have
\[
  |w-x|_\infty \leq h_n + \frac{u_n}{2}, \quad |z-y|_\infty \leq h_n + \frac{u_n}{2}.
\]
It follows readily that if $\mT_1,\dots,\mT_{j-1}$ are specified and $j \notin B$, then there are at most $(4h_nu_n^{-1}+3)^{2d} \leq (5\varphi(n))^{2d}$ possible values of $\mT_j$; if $j \in B$ there are at most $(2n)^{4d}$.  
It follows that the number of CG-approximate skeletons satisfies 
\begin{equation} \label{CB}
  |\mC_{CG}(B)| \leq (5\varphi(n))^{2d(k-|B|)}(2n)^{4d|B|}.
  \end{equation}
Note that the factor $\varphi(n)$ in place of $n$ in \eqref{CB} represents the entropy reduction resulting from the use of CG paths. 
Summing \eqref{CB} over $B$ we obtain
\begin{equation} \label{CGsize}
  |\mC_{CG}| \leq 2^k (2n)^{4dk}.
\end{equation}
For $B=\{j_1<\dots<j_{|B|}\} \subset \{1,\dots,k\}$, setting $j_0=0$ we have
\begin{equation} \label{Cbadsize}
  |\mC_{CG}^{bad}(B)| \leq \prod_{1\leq i \leq |B|} [(2(j_i-j_{i-1})n)^d (2n)^{3d}]\leq \left( \frac{16n^4k}{|B|} \right)^{d|B|},
\end{equation}
so for each $b \leq k$, using ${k\choose b} \leq (ke/b)^b$,
\begin{equation}\label{Cbadbsize}
  |\mC_{CG}^{bad}(b)| \leq {k\choose b} \left( \frac{16n^4k}{b} \right)^{db} \leq \left( \frac{8n^2k}{b} \right)^{2db}.
\end{equation}

We also use the non-coarse-grained analogs of the $\mT_j$, given by
\begin{equation} \label{Vjdef}
  \mV_j = \mV_j(\gamma) = (d_{j-1},e_{j-1},f_j,d_j), \quad j \leq k,
\end{equation}
and define the \emph{augmented skeleton} of $\gamma$ to be
\[
  \mS_{aug}(\gamma) = \{\mV_j, 1 \leq j \leq k\}.  
  \]
We write $\mC_{aug}$ for the class of all possible augemented skeletons of paths from $(0,0)$ to $(kn,0)$.
Note that $\mE_{side}(\gamma),\mE_{in}(\gamma)$ and $\mS_{CG}(\gamma)$ are functions of $\mS_{aug}(\gamma)$; we denote by $F$ the ``coarse-graining map'' such that
\[
  \mS_{CG}(\gamma) = F\left( \mS_{aug}(\gamma) \right).
\]
We can write
\[
  Z_{N,\omega} = \sum_{\mS_{CG}\in\mC_{CG}}\ \sum_{\mS_{aug} \in F^{-1}(\mS_{CG})} Z_{N,\omega}(\mS_{aug}),
\]
and define
\[
  \tZ_{N,\omega} = \sum_{\mS_{CG}\in\mC_{CG}}\ |F^{-1}(\mS_{CG})| \tZ_{N,\omega}(\mS_{CG}).
\]
Now for a given choice of $e_{j-1}'$ there are at most $(2n_1)^d$ possible choices of $e_{j-1}$ and then at most $(2n_1)^d$ for $d_{j-1}$, and similarly for $f_j',f_j,d_j$, so for all $\mS_{CG}$,
\begin{equation} \label{Fsize}
  |F^{-1}(\mS_{CG})| \leq (2n_1)^{4dk}.
\end{equation}

The following will be proved in the next section.

\begin{lemma} \label{fastpi}
For $n$ sufficiently large, there exists an even integer $n_1 \leq 6dn/\varphi(n)$ such that for all $p \in H_{n_1}$ we have
\[
  \E\log Z_{n_1,\omega}(\pi_0(p),p) \geq p(\beta)n_1 - 20dn^{1/2}\rho(n).
\]
\end{lemma}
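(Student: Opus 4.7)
The plan is to combine a path-extension argument with Lemma~\ref{unexcess} to prove the bound, with the transverse-displacement cost $u_n$ being the main technical hurdle. Setting $s(n_1,w) := n_1 p(\beta) - \E\log Z_{n_1,\omega}(w)$, the lemma's conclusion is equivalent to $s(n_1,w) \leq 20dn^{1/2}\rho(n)$ for all $w$ with $|w|_\infty \leq u_n/2$, for some even $n_1$ of the allowed size. I would fix $n_1$ to be an even integer close to $3dn/\varphi(n)$. For any such $w$, concatenating the free-endpoint partition function of length $n_1 - 2m$ with a deterministic SRW segment of length $2m \geq |w|_1$ from $(n_1 - 2m, 0)$ to $(n_1, w)$ yields (using $\E\omega = 0$ and translation-invariance)
\[
  s(n_1,w) \leq s(n_1 - 2m, 0) + 2m\bigl(p(\beta) + \log 2d\bigr).
\]
By Lemma~\ref{unexcess}(i), $s(n_1 - 2m, 0) \leq K_{16} n_1^{1/2}\log n_1 = O(\sqrt{n/\log n})$, which is $o(\sqrt{n}\,\rho(n))$ since $\rho(n) = \Theta(\log\log n/\sqrt{\log n})$. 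The remaining term is of order $u_n = h_n/\varphi(n)$.

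Closing the argument therefore requires $u_n$ to be at most of the target order $\sqrt{n}\rho(n)$, equivalently an a priori bound of the form $h_n = O(n^{1/2}(\log n)^{5/2}\log\log n)$; establishing this polylogarithmic upper bound on $h_n$ is the main obstacle. One approach is to refine the proof of Lemma~\ref{unexcess}(ii): for any $x$ with $|x|_\infty = h_n$, adequacy gives $\E\log Z_{n,\omega}(x) \geq np(\beta) - n^{1/2}\theta(n)$, which combined with the Jensen upper bound $\E\log Z_{n,\omega}(x) \leq n\lambda(\beta) + \log P(S_n = x)$ and local-CLT / large-deviation estimates for $P(S_n = x)$ constrains $|x|$ after absorbing the annealed--quenched gap. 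As a fallback, I would invoke a pigeonhole over candidates $n_1 \in R = \{\text{even }n_1 : 2n/\varphi(n) \leq n_1 \leq 6dn/\varphi(n)\}$, so that $|R| \asymp n/\varphi(n)$; assuming no $n_1 \in R$ satisfies the lemma extracts a family of bad displacements $w_{n_1}$. Exploiting the reflection symmetry $s(n_1,w) = s(n_1,-w)$ and a greedy choice of signs $\epsilon_j \in \{\pm 1\}$, one concatenates these into a polymer of length $n$ whose aggregate displacement $W = \sum_j \epsilon_j w_{n_1^{(j)}}$ remains inside the adequate region of Lemma~\ref{unexcess}(ii), and compares the resulting block-wise estimate on $\sum_j s(n_1^{(j)}, w_{n_1^{(j)}})$ (of order $\varphi(n) n^{1/2}\rho(n) \asymp n^{1/2}(\log n)^{5/2}\log\log n$) with the global bound $s(n, W) \leq n^{1/2}\theta(n) = n^{1/2}(\log n)^{5/2}$ coming from Lemma~\ref{unexcess}(ii). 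The $\log\log n$ gap between these two should produce the required contradiction, and implementing either of these routes carefully (so that the aggregate $W$ lies in the adequate region and the subadditivity/concatenation step truly yields an inconsistent pair of bounds) is the main technical step of the proof.
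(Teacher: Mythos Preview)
Your proposal has a genuine gap: neither of your two routes to controlling $u_n$ (equivalently $h_n$) works, and such control is in fact not available.

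For route (a): the Jensen bound $\E\log Z_{n,\omega}(x) \leq n\lambda(\beta) + \log P(x_n = x)$ combined with adequacy of $x$ yields
\[
  \log P(x_n = x) \geq -n\bigl(\lambda(\beta) - p(\beta)\bigr) - n^{1/2}\theta(n).
\]
In strong disorder $\lambda(\beta) - p(\beta) > 0$, so the right side is of order $-n$ and the bound on $|x|$ is trivial; it cannot produce $h_n = O\bigl(n^{1/2}(\log n)^{5/2}\log\log n\bigr)$. Indeed one expects $h_n \asymp n^{\xi}$ with $\xi > 1/2$ in strong disorder, so the bound $u_n \lesssim n^{1/2}\rho(n)$ that your path-extension argument needs is generally false.

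For route (b): subadditivity of $s$ gives $s(n,W) \leq \sum_j s\bigl(n_1^{(j)}, \epsilon_j w_{n_1^{(j)}}\bigr)$, so having the sum $\gtrsim \varphi(n)\, n^{1/2}\rho(n)$ while $s(n,W) \leq n^{1/2}\theta(n)$ is perfectly consistent --- the inequality points the wrong way for a contradiction. Moreover, the negation you are working with (``for each $n_1$ there is \emph{some} bad $w_{n_1}$'') is too weak to force any concatenated polymer through bad displacements at every block.

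The paper's proof is structurally different and does not attempt to bound $h_n$ or $u_n$. It first shows (Lemma~\ref{existence}, via a probabilistic contradiction using Lemma~\ref{sums}(ii) and the adequacy of an extremal site $x^*$ with $x_1^* = |x^*|_\infty = h_n$) that some path from $(0,0)$ to $(n,x^*)$ contains a short ``climbing segment''. Translated to start at the origin, this is a path $\alpha^*$ from $(0,0)$ to some $(m^*,y^*)$ with $m^* \leq 2n/\varphi(n)$, $y_1^* = u_n$, and the crucial property that \emph{every} sub-increment of $\alpha^*$ is efficient, i.e.\ has $s \leq 4n^{1/2}\rho(n)$. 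Given this clean segment, the construction from \cite{Al11} writes any target increment $(n_1,w)$ with $n_1 = 2d(m^*+1)$ and $|w|_\infty \leq u_n/2$ as a sum of at most $4d+1$ sub-increments of $\alpha^*$ (or their coordinate reflections, which are also efficient by symmetry); subadditivity of $s$ then yields $s(n_1,w) \leq (4d+1)\cdot 4n^{1/2}\rho(n) \leq 20d\,n^{1/2}\rho(n)$. The point is that the possibly large size of $u_n$ is absorbed because $\alpha^*$ itself already makes a first-coordinate displacement of exactly $u_n$: you do not pay a deterministic $\log 2d$ per step to cover distance $u_n$, but instead use $O(d)$ pre-certified efficient increments.
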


This lemma is central to the following, which bounds the difference between partition functions for a skeleton and for its CG approximation.

\begin{lemma} \label{CGcorrec}
There exists $K_{20}$ such that under the conditions of Theorem \ref{expconc}, for $n$ sufficiently large,
\begin{align} \label{CGcorrec1}
  \P&\left( \log Z_{N,\omega}(\mS_{aug}) - \log \tZ_{N,\omega}(F(\mS_{aug})) \geq 80dkn^{1/2}\rho(n) 
    \text{ for some } \mS_{aug} \in \mC_{aug} \right) \notag \\
  &\leq e^{-K_{20}k(\log n)(\log \log n)}.
\end{align}
\end{lemma}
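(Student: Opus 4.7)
The plan is a blockwise decomposition combined with the expectation estimate from Lemma~\ref{fastpi} and the concentration from Lemma~\ref{sums}. I would first factor $Z_{N,\omega}(\mS_{aug})$ through the pinned quadruple $(d_{j-1},e_{j-1},f_j,d_j)$ of each block $j$ and compare termwise with the definition \eqref{tZdef} of $\tZ_{N,\omega}(F(\mS_{aug}))$.  For bad blocks $j \in \mE(\mS_{aug})$ the two factorizations agree exactly and cancel, so the whole difference reduces to $\sum_{j\notin\mE}\Delta_j$, where $\Delta_j$ is the $j$-th good-block excess.  For $j\notin\mE$, restricting the paths contributing to $Z_{n,\omega}(e_{j-1}',f_j')$ to those that pass through $e_{j-1}$ at time $(j-1)n+n_1$ and $f_j$ at time $jn-n_1$ gives
\[
  Z_{n,\omega}(e_{j-1}',f_j') \geq Z_{n_1,\omega}(e_{j-1}',e_{j-1})\, Z_{n-2n_1,\omega}(e_{j-1},f_j)\, Z_{n_1,\omega}(f_j,f_j'),
\]
which cancels the central factor in $\Delta_j$.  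Combined with the trivial upper bounds $Z_{n_1,\omega}(d_{j-1},e_{j-1}) \leq Z_{n_1,\omega}^{(d_{j-1})}$ and $Z_{n_1,\omega}(f_j,d_j) \leq Z_{n_1,\omega}^{(f_j)}$ (from releasing the terminal endpoint), this yields the key per-block inequality
\[
  \Delta_j \leq \bigl[\log Z_{n_1,\omega}^{(d_{j-1})} - \log Z_{n_1,\omega}(e_{j-1}',e_{j-1})\bigr] + \bigl[\log Z_{n_1,\omega}^{(f_j)} - \log Z_{n_1,\omega}(f_j,f_j')\bigr].
\]

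Next I would control expectations and fluctuations of the right side. By \eqref{frombelow}, $\E\log Z_{n_1,\omega}^{(\cdot)} = \E\log Z_{n_1,\omega} \leq p(\beta)n_1$, while Lemma~\ref{fastpi} together with translation invariance of $\mathbb{P}$ gives $\E\log Z_{n_1,\omega}(e_{j-1}',e_{j-1})$ and $\E\log Z_{n_1,\omega}(f_j,f_j')$ both at least $p(\beta)n_1 - 20dn^{1/2}\rho(n)$; summing produces $\sum_{j\notin\mE}\E\Delta_j \leq 40dkn^{1/2}\rho(n)$.  For the fluctuations, I would apply Lemma~\ref{sums}(i) to the $2k$ shifted free partition functions $\log Z_{n_1,\omega}^{(d_{j-1})}$ and $\log Z_{n_1,\omega}^{(f_j)}$, and Lemma~\ref{sums}(ii) to the $2k$ point-to-point pieces; the corresponding time intervals $[(j-1)n,(j-1)n+n_1]$ and $[jn-n_1,jn]$ are pairwise non-overlapping thanks to $n \geq 2n_1$.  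Because $n_1 \leq 6dn/(\log n)^3$, one verifies directly that the resulting concentration scales $k(n_1/\log n_1)^{1/2}$ from (i) and $kn_1^{1/2}$ from (ii) are both $\ll kn^{1/2}\rho(n)$, so choosing $a$ of order $kn^{1/2}\rho(n)$ in the tail bounds gives, for each fixed $\mS_{aug}$,
\[
  \mathbb{P}\bigl(\Delta(\mS_{aug}) \geq 80dkn^{1/2}\rho(n)\bigr) \leq \exp\bigl(-ck(\log n)(\log \log n)\bigr)
\]
for some $c>0$.

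A rough enumeration of augmented skeletons (each block offers at most $(2n)^d (2n_1)^{2d}$ choices for $(e_{j-1},f_j,d_j)$ given $d_{j-1}$) gives $|\mC_{aug}| \leq (2n)^{O(dk)}$, so $\log|\mC_{aug}| = O(dk\log n)$, which for large $n$ is absorbed by $ck(\log n)(\log \log n)$; a union bound then yields \eqref{CGcorrec1} with appropriate $K_{20}>0$.  The main obstacle is the quantitative balance between the $\log \log n$ gain in $\rho(n)$ and the $O(dk\log n)$ entropy of $\mC_{aug}$: the $\log \log n$ must simultaneously absorb the $O(dn^{1/2}\rho(n))$ per-block mean excess (which forces the use of Lemma~\ref{fastpi}), the concentration scales above, and the union-bound cost.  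The extra $(\log n_{\max})^{1/2}$ factor that Lemma~\ref{sums}(i) provides over a raw subgaussian bound for the shifted free partition functions is what creates the needed margin; for the point-to-point pieces one relies instead on $n_1 \ll n/\log n$, which makes even the weaker bound of Lemma~\ref{sums}(ii) adequate on the lower-tail side.
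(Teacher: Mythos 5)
Your proposal follows essentially the same approach as the paper: decompose blockwise using the superadditive path-factorization of $Z_n(e_{j-1}',f_j')$ through $e_{j-1}$ and $f_j$, invoke Lemma~\ref{fastpi} to control the mean defect of the two short pinned pieces, apply Lemma~\ref{sums} with $n_{\max}=n_1$ for the fluctuations, and union-bound over the polynomial number of augmented skeletons. The paper keeps all four terms as point-to-point partition functions and applies Lemma~\ref{sums}(ii) throughout, whereas you additionally upgrade $Z_{n_1,\omega}(d_{j-1},e_{j-1})\leq Z_{n_1,\omega}^{(d_{j-1})}$ and $Z_{n_1,\omega}(f_j,d_j)\leq Z_{n_1,\omega}^{(f_j)}$ so as to apply Lemma~\ref{sums}(i) to those terms; this works but is unnecessary, and your closing remark that the $(\log n_{\max})^{1/2}$ factor from part (i) ``is what creates the needed margin'' mislocates the mechanism. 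The real source of the margin is that $n_1 \leq 6dn/\varphi(n)$ with $\varphi(n)=(\log n)^3$, which already makes Lemma~\ref{sums}(ii) with $n_{\max}=n_1$ produce an exponent of order $k(\log n)(\log\log n)$ that dominates the $O(dk\log n)$ entropy; the paper explicitly remarks on this, and notes that no use is made of the CG entropy reduction \eqref{CB} here. Your arithmetic and the final union bound are otherwise consistent with the paper.
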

\begin{proof}
We have
\begin{align} \label{splitup}
  \P&\left( \log Z_{N,\omega}(\mS_{aug}) - \log \tZ_{N,\omega}(F(\mS_{aug})) \geq 80dkn^{1/2}\rho(n) 
    \text{ for some } \mS_{aug} \in \mC_{aug} \right) \notag \\
  &\leq \sum_{\mS_{CG} \in \mC_{CG}}\ \sum_{\mS_{aug} \in F^{-1}(\mS_{CG})}
    \P\left( \log Z_{N,\omega}(\mS_{aug}) - \log \tZ_{N,\omega}(\mS_{CG}) \geq 80dkn^{1/2}\rho(n) \right).
\end{align}
Fix $\mS_{CG} \in \mC_{CG}$ and $\mS_{aug} \in F^{-1}(\mS_{CG})$.  We can write $\mS_{aug}$ as $\{\mV_j,j\leq k\}$ with $\mV_j$ as in \eqref{Vjdef}.
Then using Lemma \ref{rateweak},
\begin{align} \label{sumsmall}
  \log &Z_{N,\omega}(\mS_{aug}) - \log \tZ_{N,\omega}(\mS_{CG}) \notag\\
  &\leq \sum_{j \notin B} \bigg[ \left( \log Z_{n_1,\omega}(d_{j-1},e_{j-1})
    - \log Z_{n_1,\omega}(e_{j-1}',e_{j-1}) \right) \notag \\
  &\qquad \qquad + \left( \log Z_{n_1,\omega}(f_j,d_j) - \log Z_{n_1,\omega}(f_j,f_j') \right) \bigg] \notag\\
  &\leq \sum_{j \notin B} \bigg[ \left( \log Z_{n_1,\omega}(d_{j-1},e_{j-1}) - \E\log Z_{n_1,\omega}(d_{j-1},e_{j-1}) \right) \notag \\
  &\qquad\qquad - \left( \log Z_{n_1,\omega}(e_{j-1}',e_{j-1}) - \E\log Z_{n_1,\omega}(e_{j-1}',e_{j-1}) \right) \notag \\
  &\qquad\qquad + \left( \log Z_{n_1,\omega}(f_j,d_j) - \E\log Z_{n_1,\omega}(f_j,d_j) \right) \notag \\
  &\qquad\qquad - \left( \log Z_{n_1,\omega}(f_j,f_j') - \E\log Z_{n_1,\omega}(f_j,f_j') \right) \bigg] \notag \\
  &\qquad + \sum_{j \notin B} \left[ 2p(\beta) n_1 - \E\log Z_{n_1,\omega}(e_{j-1}',e_{j-1})
    - \E\log Z_{n_1,\omega}(f_j,f_j') \right].
\end{align}
By Lemma \ref{fastpi}, the last sum is bounded by $40dkn^{1/2}\rho(n)$.  Hence letting $T$ denote the first sum on the right side of \eqref{sumsmall}, we have by \eqref{sumsmall} and Lemma \ref{sums}(ii):
\begin{align} \label{logdiff}
  \P&\left( \log Z_{N,\omega}(\mS_{aug}) - \log \tZ_{N,\omega}(\mS_{CG}) \geq 80dkn^{1/2}\rho(n) \right) \notag\\
  &\leq \P\left( T > 40dkn^{1/2}\rho(n) \right) \notag\\
  &\leq 2^{2k+1}\exp\left(- 20K_{13}dk\rho(n)\left( \frac{n}{n_1} \right)^{1/2} \right) \notag \\
  &\leq e^{-kK_{21}(\log n)(\log \log n)}.
\end{align}
Combining \eqref{splitup} and \eqref{logdiff} with \eqref{CGsize} and \eqref{Fsize} we obtain that for large $n$,
\begin{align} \label{combine}
  \P&\left( \log Z_{N,\omega}(\mS_{aug}) - \log \tZ_{N,\omega}(F(\mS_{aug})) \geq 80dkn^{1/2}\rho(n) 
    \text{ for some } \mS_{aug} \in \mC_{aug} \right) \notag\\
  &\leq (2n)^{9dk} e^{-kK_{21}(\log n)(\log \log n)} \notag\\
  &\leq e^{-kK_{21}(\log n)(\log \log n)/2}.
\end{align}
\end{proof}

It is worth noting that in \eqref{combine} we do not make use of the entropy reduction contained in \eqref{CB}.  Nonetheless we are able to obtain a good bound because we apply Lemma \ref{sums}(ii) with $n_{max}=n_1$ instead of $n_{max}=n$.

Let $b_{nk} = \lfloor \frac{k\log\log n}{(\log n)^{3/2}} \rfloor$. We deal separately with CG-approximate skeletons according to whether the number of bad blocks exceeds $b_{nk}$.  Let
\[
  \mC_{CG}^- = \cup_{B:|B|\leq b_{nk}} \mC_{CG}(B), \quad \mC_{CG}^+ = \cup_{B:|B|> b_{nk}} \mC_{CG}(B).
\]

The next lemma shows that bad blocks have a large cost, in the sense of reducing the mean of the log partition function---compare the $n^{1/2}\theta(n)$ factor in \eqref{badcost2} to the $n^{1/2}\log n$ factor in \eqref{rate3}.

\begin{lemma} \label{badcost}
For $n$ sufficiently large, for all $1 \leq b \leq k$ and $\mS_{CG}^{bad} \in \mC_{CG}^{bad}(b)$, 
\begin{equation} \label{badcost2}
  \E\log \tZ_{N,\omega}(\mS_{CG}^{bad}) \leq p(\beta)bn - \half bn^{1/2}\theta(n).
\end{equation}
\end{lemma}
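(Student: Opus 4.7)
The plan is to show that each bad block contributes at least $\tfrac12 n^{1/2}\theta(n)$ to the ``inefficiency'' $bnp(\beta) - \E\log \tZ_{N,\omega}(\mS_{CG}^{bad})$, and then to sum over the $b$ bad blocks. By translation invariance of the disorder, for each $j \in B$ the three expected log factors in \eqref{tZdef} depend only on the spatial components $a_j, b_j, c_j$ of $e_{j-1}-d_{j-1}$, $f_j - e_{j-1}$, $d_j - f_j$ respectively, and
\[
  \E\log Z_{n_1,\omega}(d_{j-1},e_{j-1}) + \E\log Z_{n-2n_1,\omega}(e_{j-1},f_j) + \E\log Z_{n_1,\omega}(f_j,d_j) = np(\beta) - \big[s(n_1,a_j)+s(n-2n_1,b_j)+s(n_1,c_j)\big].
\]
So the lemma reduces to showing that for every $j \in B$,
\[
  s(n_1, a_j) + s(n-2n_1, b_j) + s(n_1, c_j) \geq \tfrac12 n^{1/2}\theta(n).
\]
The only structural tool I would use is subadditivity of $s(\cdot,\cdot)$, which the paper already deduces from the nesting $Z_{n+m,\omega}(x+y) \geq Z_{n,\omega}(x)Z_{m,\omega}^{(n,x)}(y)$.

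For $j \in \mE_{in}$ the bound is immediate: the total transverse increment $a_j+b_j+c_j = d_j - d_{j-1}$ is inadequate by definition, so $s(n, a_j+b_j+c_j) > n^{1/2}\theta(n)$, and subadditivity across the split $n = n_1 + (n-2n_1) + n_1$ yields at least $n^{1/2}\theta(n)$.

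The main obstacle is the sidestep case $j \in \mE_{side}$, since there $d_j - d_{j-1}$ may itself be adequate and the single direct application of subadditivity above fails. Here I would focus on a single end of the block: without loss of generality $|a_j|_\infty > h_n$. By the definition of $h_n$ as the maximum of $|x|_\infty$ over \emph{adequate} $n$-step transverse increments, this forces $a_j$, viewed as an $n$-step increment, to be inadequate, so $s(n, a_j) > n^{1/2}\theta(n)$. (Parity is not an issue, as $n$ and $n_1$ are both even.) Applying subadditivity to the trivial split $a_j = a_j + 0$ over time $n = n_1 + (n-n_1)$ gives
\[
  s(n_1, a_j) \geq s(n, a_j) - s(n-n_1, 0) > n^{1/2}\theta(n) - K_{16} n^{1/2}\log n,
\]
where the second inequality uses Lemma \ref{unexcess}(i). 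Since $\theta(n)=(\log n)^{5/2}$ dominates $\log n$, the right-hand side exceeds $\tfrac12 n^{1/2}\theta(n)$ for $n$ sufficiently large. Combining the two cases and summing over $j \in B$ yields the lemma.
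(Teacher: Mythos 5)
Your proof is correct and takes essentially the same approach as the paper: the $\mE_{in}$ case is handled by the same one-step superadditivity bound, and the $\mE_{side}$ case uses the same chain $s(n_1,a_j)\geq s(n,a_j)-s(n-n_1,0)$ (which is exactly the paper's \eqref{straight} rewritten in $s$-notation) together with Lemma~\ref{unexcess}(i). The only cosmetic difference is that you phrase the argument throughout in terms of the inefficiency $s(\cdot,\cdot)$ rather than $\E\log Z$, and you make the WLOG on which end of the block the sidestep occurs explicit.
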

\begin{proof}
Fix $B \subset \{1,\dots,k\}$ with $|B|=b$, fix $\mS_{CG}^{bad} \in \mC_{CG}^{bad}(B)$, let $\mE_{in},\mE_{side}$ be the corresponding sets of indices of bad blocks, and let $\{\mT_j, j \in B\}$ be as in \eqref{Rjdef}.  Then 
\begin{align} \label{CGdecomp}
  \E\log &\tZ_{N,\omega}(\mS_{CG}^{bad}) \notag\\
  &= \sum_{j \in B} \big[ \E\log Z_{n_1,\omega}(d_{j-1},e_{j-1}) + \E\log Z_{n-2n_1,\omega}(e_{j-1},f_j)
    + \E\log Z_{n_1,\omega}(f_j,d_j) \big].
\end{align}
For $j \in \mE_{in}$ we have 
\begin{align} \label{exc}
  \E\log &Z_{n_1,\omega}(d_{j-1},e_{j-1}) + \E\log Z_{n-2n_1,\omega}(e_{j-1},f_j)
    + \E\log Z_{n_1,\omega}(f_j,d_j) \notag\\
  &\leq \E\log Z_{n,\omega}(d_{j-1},d_j) \notag \\
  &\leq p(\beta)n - n^{1/2}\theta(n).
\end{align}
For $j \in \mE_{side}$, write $e_{j-1}-d_{j-1}$ as $(n_1,x)$, so $|x|_\infty > h_n$ and therefore $x$ is inadequate.  If the sidestep occurs from $(j-1)n$ to $(j-1)n+n_1$, then by superadditivity and Lemma \ref{unexcess}(i), 
\begin{align} \label{straight}
  \E\log Z_{n_1,\omega}(d_{j-1},e_{j-1}) &= \E\log Z_{n_1,\omega}((0,0),(n_1,x)) \notag\\
  &\leq \E\log Z_{n,\omega}((0,0),(n,x)) - \E\log Z_{n-n_1,\omega}((n_1,x),(n,x)) \notag \\
  &\leq p(\beta)n - n^{1/2}\theta(n) - \left( p(\beta)(n-n_1) - K_{16}n^{1/2}\log n \right) \notag\\
  &\leq p(\beta)n_1 - \half n^{1/2}\theta(n),
\end{align}
and therefore
\begin{align} \label{side}
  \E\log &Z_{n_1,\omega}(d_{j-1},e_{j-1}) + \E\log Z_{n-2n_1,\omega}(e_{j-1},f_j)
    + \E\log Z_{n_1,\omega}(f_j,d_j) \notag\\
  &\leq p(\beta)n - \half n^{1/2}\theta(n).
\end{align}
Combining \eqref{CGdecomp}, \eqref{exc} and \eqref{side} we obtain
\begin{equation}
  \E\log \tZ_{N,\omega}(\mS_{CG}^{bad}) 
    \leq p(\beta)bn - \half bn^{1/2}\theta(n).
\end{equation}
\end{proof}

It follows by additivity that 
\begin{equation} \label{ecompare}
  \E\log \tZ_{N,\omega}(\mS_{CG}) \leq k\E\log Z_{n,\omega}
\end{equation}
for all CG skeletons $\mS_{CG}$.  Rather than considering deviations of $\log \tZ_{N,\omega}(\mS_{CG})$ above its mean, it will be advantageous to consider deviations above the right side of \eqref{ecompare}.
The next two lemmas show that it is unlikely for this deviation to be very large for any CG skeleton.  We will use the fact that for each $\mS_{CG} \in \mC_{CG}$ with bad set $B$, we have by Lemmas \ref{rateweak} and \ref{badcost}
\begin{align} \label{meandiff}
  |B|\E\log Z_{n,\omega} - \E\log \tZ_{N,\omega}(\mS_{CG}^{bad}) \geq \half |B|n^{1/2}\theta(n) - K_{14}|B|n^{1/2}\log n
    \geq \frac{1}{4}|B|n^{1/2}\theta(n).
\end{align}

\begin{lemma} \label{fewbad}
Under the conditions of Theorem \ref{expconc}, if $n$ and then $k$ are chosen sufficiently large,
\begin{align} \label{CGbound1}
  P&\bigg( \log \tZ_{N,\omega}(\mS_{CG}) - k\E\log Z_{n,\omega} \geq 80dkn^{1/2}\rho(n) 
    \text{ for some } \mS_{CG} \in \mC_{CG}^- \bigg) \notag \\
  &\leq e^{-16K_{13}dk\rho(n)}.
\end{align}
\end{lemma}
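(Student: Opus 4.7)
The plan is to obtain a per-skeleton concentration bound using the upper-tail estimate Lemma \ref{sums}(iii), then union bound over $\mC_{CG}^-$, exploiting the entropy reduction expressed in \eqref{CB}.

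For the per-skeleton step, I would fix $\mS_{CG}$ with bad set of size $b\le b_{nk}$ and decompose $\log\tZ_{N,\omega}(\mS_{CG})=\sum_{j=1}^r\zeta_j$ as a sum of $r\le(k-b)+3b\le 2k$ point-to-point log partition functions over disjoint time intervals each of length at most $n$, letting $\chi_j$ denote the matching point-to-everywhere log partition functions. Then $\zeta_j\le\chi_j$, and super-additivity of $m\mapsto\E\log Z_{m,\omega}$ (specifically $\E\log Z_{n,\omega}\ge 2\E\log Z_{n_1,\omega}+\E\log Z_{n-2n_1,\omega}$ applied within the bad blocks) gives $\sum_j\E\chi_j\le k\E\log Z_{n,\omega}$. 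Consequently the event in \eqref{CGbound1} for this particular $\mS_{CG}$ forces
\[
  \sum_{j=1}^r(\zeta_j-\E\chi_j)_+\ \ge\ 80dk\,n^{1/2}\rho(n),
\]
and Lemma \ref{sums}(iii) with $n_{\max}=n$, together with the identity $K_{13}\rho(n)(\log n)^{1/2}=\log\log n$, bounds this per-skeleton probability by $2^{r+1}\exp(-40dk\log\log n)$.

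For the union-bound step, \eqref{CB} and $\binom{k}{b}\le(ek/b)^b$ give
\[
  |\mC_{CG}^-|\ \le\ \sum_{b=0}^{b_{nk}}\binom{k}{b}(5\varphi(n))^{2d(k-b)}(2n)^{4db}.
\]
The dominant factor $(5\varphi(n))^{2dk}\le\exp((6d+o(1))k\log\log n)$ (since $\varphi(n)\le(\log n)^3$) swamps the $b\ge 1$ contributions, because the definition $b_{nk}=\lfloor k\log\log n/(\log n)^{3/2}\rfloor$ makes $b_{nk}\log(2n)=O(k\log\log n/(\log n)^{1/2})$ and $b_{nk}\log(k/b_{nk})=O(k(\log\log n)^2/(\log n)^{3/2})$, both $o(k\log\log n)$. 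Hence $|\mC_{CG}^-|\le\exp((6d+o(1))k\log\log n)$, and multiplying by the per-skeleton bound gives an overall probability at most $\exp(-(34d-o(1))k\log\log n)$, which for $n$ large is far below the required $e^{-16K_{13}dk\rho(n)}=\exp(-16dk\log\log n/(\log n)^{1/2})$ since $(\log n)^{1/2}\to\infty$.

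The main obstacle is the tight balance between the coarse-grained entropy count $\exp((6d+o(1))k\log\log n)$ and the per-skeleton exponential decay. Using the two-sided Lemma \ref{sums}(ii) would yield per-skeleton decay only of order $\exp(-O(k\log\log n/(\log n)^{1/2}))$, which is far too weak to absorb the count. It is therefore essential to invoke the upper-tail Lemma \ref{sums}(iii) — equivalently, to compare the $\zeta_j$'s to $\E\chi_j$ rather than to $\E\zeta_j$ — which pairs naturally with the super-additivity estimate for the means and is exactly what the bound \eqref{oneside} was packaged for.
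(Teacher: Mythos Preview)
Your argument is correct and actually yields a stronger bound than the one stated. The paper takes a somewhat different route: it splits $\log\tZ_{N,\omega}(\mS_{CG})$ into its good part $\log\tZ_{N,\omega}(\mS_{CG}^{good})$ and its bad part $\log\tZ_{N,\omega}(\mS_{CG}^{bad})$, and handles the two pieces with different concentration inputs. For the good part it applies Lemma~\ref{sums}(iii) exactly as you do, and union-bounds over all of $\mC_{CG}^-$ using the entropy estimate \eqref{CCGmsize}. For the bad part it first invokes \eqref{meandiff} to replace $b\,\E\log Z_{n,\omega}$ by $\E\log\tZ_{N,\omega}(\mS_{CG}^{bad})$, then applies only the weaker two-sided bound Lemma~\ref{sums}(ii); this is compensated by union-bounding over the much smaller class $\mC_{CG}^{bad}(b)$, whose size is controlled by \eqref{Cbadbsize} and \eqref{bnkprop2}. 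The bad-part contribution is what drives the final exponent $e^{-16K_{13}dk\rho(n)}$.

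Your approach sidesteps this split entirely: by observing that superadditivity gives $\sum_j\E\chi_j\le k\E\log Z_{n,\omega}$ across \emph{all} pieces (good and bad alike), you can apply Lemma~\ref{sums}(iii) once to the full sum of $r=k+2b\le 2k$ terms and then union-bound over $\mC_{CG}^-$ directly. This is cleaner and in fact gives a final probability of order $\exp\bigl(-c\,dk\log\log n\bigr)$, a full factor of $(\log n)^{1/2}$ better in the exponent than the stated bound. The paper's good/bad decomposition is not needed here; its only advantage would be in a setting where the one-sided bound \eqref{oneside} were unavailable for the bad pieces, which is not the case. One small wording point: $r=(k-b)+3b=k+2b$, so the inequality $r\le 2k$ uses $b\le b_{nk}\le k/2$, which holds for $n$ large.
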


\begin{proof}
From \eqref{CB} we see that
\begin{equation} \label{CCGmsize}
  |\mC_{CG}^-| \leq 2^k(5\varphi(n))^{2dk} (2n)^{4db_{nk}} \leq e^{10dk\log\log n}.
\end{equation}
Combining this with Lemma \ref{sums}(ii),(iii) (with $n_{max}=n$) and \eqref{CGsize}, \eqref{Cbadbsize}, we obtain
\begin{align} \label{tZbound}
  \P\bigg( &\log \tZ_{N,\omega}(\mS_{CG}) - k\E\log Z_{n,\omega} \geq 80dkn^{1/2}\rho(n) 
    \text{ for some } \mS_{CG} \in \mC_{CG}^- \bigg) \notag\\
  &\leq \sum_{b=0}^{b_{nk}} \P\bigg( \log \tZ_{N,\omega}(\mS_{CG}^{bad}) - b\E\log Z_{n,\omega} \geq 40dkn^{1/2}\rho(n) 
    \text{ for some } \mS_{CG}^{bad} \in \mC_{CG}^{bad}(b) \bigg)\notag\\
  &\qquad + \P\bigg( \log \tZ_{N,\omega}(\mS_{CG}^{good}) - (k-|B|)\E\log Z_{n,\omega} \geq 40dkn^{1/2}\rho(n) 
    \text{ for some } \mS_{CG} \in \mC_{CG}^- \bigg) \notag\\
  &\leq \sum_{b=0}^{b_{nk}} \P\bigg( \log \tZ_{N,\omega}(\mS_{CG}^{bad}) - \E\log \tZ_{N,\omega}(\mS_{CG}^{bad})
    \geq 40dkn^{1/2}\rho(n) \text{ for some } \mS_{CG}^{bad} \in \mC_{CG}^{bad}(b) \bigg) \notag\\
  &\qquad + |\mC_{CG}^-|2^{k+1} \exp\left( -20K_{13}dk\rho(n)(\log n)^{1/2} \right)  \notag\\
  &\leq \sum_{b=1}^{b_{nk}} |\mC_{CG}^{bad}(b)| 2^{3b+1} e^{-20K_{13}dk\rho(n)}
    + |\mC_{CG}^-|2^{k+1} e^{-20dk\log\log n}  \notag\\
  &\leq \sum_{b=1}^{b_{nk}} \left( \frac{32n^2k}{b} \right)^{2db} e^{-20K_{13}dk\rho(n)}
    + e^{-9dk\log\log n}.
\end{align}
Note that the event in the third line of \eqref{tZbound} is well-defined because $\mS_{CG}^{good}$ is a function of $\mS_{CG}$.
For each $b \leq b_{nk}$ we have
\begin{equation} \label{bnkprop1}
  \frac{ \log \frac{32n^2k}{b} }{ \frac{32n^2k}{b} } \leq \frac{ \log \frac{32n^2k}{b_{nk}} }{ \frac{32n^2k}{b_{nk}} }
    \leq \frac{ 3 \log\log n }{ 32n^2(\log n)^{1/2} }
\end{equation}
so
\begin{equation} \label{bnkprop2}
  2db\log \frac{32n^2k}{b} \leq \frac{ 3dk \log\log n }{ (\log n)^{1/2} } = 3K_{13}dk\rho(n).
\end{equation}
With \eqref{tZbound} this shows that for $k$ sufficiently large (depending on $n$),
\begin{align} \label{tZbound2}
  \P\bigg( &\log \tZ_{N,\omega}(\mS_{CG}) - k\E\log Z_{n,\omega} \geq 80dkn^{1/2}\rho(n) 
    \text{ for some } \mS_{CG} \in \mC_{CG}^- \bigg) \notag\\
  &\leq b_{nk}e^{-17K_{13}dk\rho(n)} + e^{-9dk\log\log n} \notag\\
  &\leq e^{-16K_{13}dk\rho(n)}.
\end{align}
\end{proof}

We continue with a similar but simpler result for $\mC_{CG}^+$.

\begin{lemma} \label{manybad2}
Under the conditions of Theorem \ref{ratemain}, for $n$ sufficiently large and $N=kn$,
\begin{align} \label{CGbound1}
  P&\bigg( \log \tZ_{N,\omega}(\mS_{CG}) - k\E\log Z_{n,\omega} \geq 0
    \text{ for some } \mS_{CG} \in \mC_{CG}^+ \bigg) \leq e^{-K_{13}k(\log n)(\log\log n)/16}.
\end{align}
\end{lemma}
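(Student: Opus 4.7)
The plan is to mirror the three-step structure of Lemma \ref{fewbad} but to exploit the much larger mean shortfall forced when $b=|B|>b_{nk}$. By \eqref{meandiff} each bad block costs order $n^{1/2}\theta(n)=n^{1/2}(\log n)^{5/2}$ in the mean, which dominates the $n^{1/2}$ scale of the standard deviation of an individual block by a factor of $(\log n)^{5/2}$. This margin is so generous that, unlike in Lemma \ref{fewbad}, one does not need any further entropy reduction beyond the bound \eqref{CB}, and only Lemma \ref{sums}(ii) with $n_{\max}=n$ is required.

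Fix $\mS_{CG}\in\mC_{CG}(B)$ with $b=|B|>b_{nk}$. For each $j\notin B$, translation invariance and the trivial inequality $Z_{n,\omega}(e_{j-1}',f_j')\le Z_{n,\omega}^{e_{j-1}'}$ give $\E\log Z_{n,\omega}(e_{j-1}',f_j')\le \E\log Z_{n,\omega}$. Summing over good blocks and invoking \eqref{meandiff} for the bad product yields
\[
  \E\log \tZ_{N,\omega}(\mS_{CG}) \le k\E\log Z_{n,\omega} - \tfrac14 bn^{1/2}\theta(n).
\]
Hence the event $\{\log \tZ_{N,\omega}(\mS_{CG})\ge k\E\log Z_{n,\omega}\}$ forces $\log \tZ_{N,\omega}(\mS_{CG})-\E\log \tZ_{N,\omega}(\mS_{CG})\ge \tfrac14 bn^{1/2}\theta(n)$. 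Now $\log\tZ_{N,\omega}(\mS_{CG})$ is a sum of $k-b$ point-to-point log partition functions of length $n$ (from the good blocks) together with $3b$ point-to-point log partition functions of length $\le n$ (from the bad blocks), placed over consecutive time intervals. Applying Lemma \ref{sums}(ii) with $n_{\max}=n$, $r=k+2b$ and $2a=\tfrac14 bn^{1/2}\theta(n)$ gives
\[
  \P\bigl(\log \tZ_{N,\omega}(\mS_{CG})\ge k\E\log Z_{n,\omega}\bigr) \le 2^{k+2b+1}\exp\bigl(-\tfrac18 K_{13}\,b(\log n)^{5/2}\bigr).
\]

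Finally I would union-bound over $\mS_{CG}\in\mC_{CG}^+$. Using \eqref{CB} and $\binom{k}{b}\le 2^k$, the number of CG-approximate skeletons with exactly $b$ bad blocks is at most $2^k(5\varphi(n))^{2d(k-b)}(2n)^{4db}$, whose logarithm is of order $k\log\log n + b\log n$. Because $(\log n)^{5/2}$ beats $\log n$ by the factor $(\log n)^{3/2}$, for $n$ large the $b\log n$ contribution is absorbed into half of the main exponent, and the $k\log\log n$ piece is absorbed into the tail once summed against the main decay; this leaves a per-$b$ bound of order $\exp(-K_{13}b(\log n)^{5/2}/16)$. Summing over $b>b_{nk}$ is then a geometric-type series dominated by its smallest index, and a case split shows that $b>b_{nk}$ implies $b(\log n)^{5/2}\ge k(\log n)(\log\log n)$ (if $b_{nk}\ge 1$ this follows from $b\ge b_{nk}+1>k\log\log n/(\log n)^{3/2}$, while if $b_{nk}=0$ we have $k\log\log n/(\log n)^{3/2}<1$ so $(\log n)^{5/2}>k(\log n)(\log\log n)$ and $b\ge 1$ suffices). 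Substituting gives the claimed bound $\exp(-K_{13}k(\log n)(\log\log n)/16)$. The only real obstacle is the bookkeeping needed to verify that all combinatorial factors are comfortably absorbed and that the constant $1/16$ in the statement emerges; the conceptual point, that many bad blocks are too expensive in the mean to be recovered by Gaussian-scale fluctuations, is transparent.
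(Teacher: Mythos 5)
Your proof is correct and uses the same essential ingredients as the paper --- the mean-shortfall estimate \eqref{meandiff} (built from Lemmas \ref{rateweak} and \ref{badcost}), the good-block domination $\E\log Z_{n,\omega}(e_{j-1}',f_j')\le \E\log Z_{n,\omega}$, and Lemma \ref{sums}(ii) with $n_{\max}=n$ --- but it organizes the union bound differently. The paper works with a single, uniform deviation threshold $\tfrac14 b_{nk}n^{1/2}\theta(n)$ valid for every $\mS_{CG}\in\mC_{CG}^+$, couples this with the crude count $|\mC_{CG}^+|\leq (2n)^{4dk}$ and $r\leq 3k$, and finishes in one line; you instead stratify $\mC_{CG}^+$ by the exact number of bad blocks $b$, use the stronger $b$-dependent shortfall $\tfrac14 bn^{1/2}\theta(n)$ together with the refined count from \eqref{CB}, and then sum a geometric-type series in $b$. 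Both routes land in the same place; yours trades some extra bookkeeping for tighter per-$b$ control. One small bonus of your version is that it handles the degenerate case $b_{nk}=0$ explicitly (when $k\log\log n<(\log n)^{3/2}$, the paper's threshold $\tfrac14 b_{nk}n^{1/2}\theta(n)$ is $0$, so strictly speaking the paper's reduction step needs the observation you make, namely that $b\geq 1$ already gives a shortfall $\geq\tfrac14 n^{1/2}\theta(n)\geq\tfrac14 k^{-1}\cdot k n^{1/2}\theta(n)$ large enough to produce the claimed exponent). The bookkeeping you defer does go through: absorbing the combinatorial factors into a fraction of $\tfrac18 K_{13}b(\log n)^{5/2}$, summing the series, and invoking $b(\log n)^{5/2}>k(\log n)(\log\log n)$ yields the stated $1/16$.
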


\begin{proof}
In contrast to \eqref{CB}, it is straightforward that
\begin{equation} \label{CCGplus}
  |\mC_{CG}^+| \leq (2n)^{4dk}.
  \end{equation}
Using \eqref{meandiff} we obtain that for $\mS_{CG} \in \mC_{CG}(B)$ with $|B|\geq b_{nk}$,
\begin{align} \label{lowmean}
  k\E&\log Z_{n,\omega} - \E\log \tZ_{N,\omega}(\mS_{CG}) \notag\\
  &= \left[ |B|\E\log Z_{n,\omega} - \E\log \tZ_{N,\omega}(\mS_{CG}^{bad}) \right] 
    + \left[ (k-|B|)\E\log Z_{n,\omega} - \E\log \tZ_{N,\omega}(\mS_{CG}^{good}) \right] \notag\\
  &\geq \frac{1}{4}b_{nk}n^{1/2}\theta(n).
\end{align}
  
Combining this with Lemma \ref{sums}(ii) (with $n_{max}=n$) and \eqref{CCGplus}, we obtain
\begin{align} \label{tZbound0}
  \P\bigg( &\log \tZ_{N,\omega}(\mS_{CG}) - k\E\log Z_{n,\omega} \geq 0 
    \text{ for some } \mS_{CG} \in \mC_{CG}^+ \bigg) \notag\\
  &\leq \P\bigg( \log \tZ_{N,\omega}(\mS_{CG}) - \E\log \tZ_{N,\omega}(\mS_{CG}) \geq \frac{1}{4}b_{nk}n^{1/2}\theta(n) 
    \text{ for some } \mS_{CG} \in \mC_{CG}^+ \bigg) \notag\\
  &\leq |\mC_{CG}^+|2^{3k+1} e^{-K_{13}b_{nk}\theta(n)/8} \notag\\
  &\leq e^{-K_{13}k(\log n)(\log\log n)/16}.
\end{align}
\end{proof}

We can now complete the proof of Theorem \ref{ratemain}.
If we take $n$ and then $k$ large,
with probability greater than 1/2, none of the events given in Lemmas \ref{CGcorrec}, \ref{fewbad} and \ref{manybad2} occur, and we then have for all $\mS_{aug} \in \mC_{aug}$:
\begin{align} \label{eachskel}
  \log Z_{N,\omega}(\mS_{aug}) &\leq k\E\log Z_{n,\omega} + 160dkn^{1/2}\rho(n).
\end{align}
Then since $|\mC_{aug}| \leq (2n)^{3dk}$, summing over $\mS_{aug} \in \mC_{aug}$ shows that, still with probability greater than 1/2,
\begin{equation} \label{allskel}
  \log Z_{kn,\omega} \leq k\E\log Z_{n,\omega} + 160dkn^{1/2}\rho(n) + 3dk\log(2n) \leq k\E\log Z_{n,\omega} + 161dkn^{1/2}\rho(n).
\end{equation}
By \eqref{aslimit}, for fixed $n$, for sufficiently large $k$ we have, again with probability greater than 1/2:
\begin{equation} \label{aslimit2}
  \frac{1}{kn}\log Z_{kn,\omega} \geq p(\beta) - \frac{1}{n}.
\end{equation}
Thus with positive probability, both \eqref{allskel} and \eqref{aslimit2} hold, and hence
\[
  k\E\log Z_{n,\omega} + 161dkn^{1/2}\rho(n) \geq knp(\beta) - k,
\]
which implies
\[
  \E\log Z_{n,\omega} \geq np(\beta) - 162dn^{1/2}\rho(n).
\]

\section{Proof of Lemma \ref{fastpi}}\label{prooflem}

We begin with some definitions.  
A path $((l,x_{l}),(l+1,x_{l+1}),\dots,(l+m,x_{l+m}))$ is \emph{clean} if every increment $(t-s,x_{t}-x_{s})$ with $l \leq s < t \leq l+m$ is efficient.
Let $x^*$ be an adequate site with first coordinate $x_1^*=|x^*|_\infty=h_n$. Given a path $\gamma = \{(m,x_{m})\}$ from $(0,0)$ to $(n,x^*)$, let 
\[
  \tau_j = \tau_j(\gamma) = \min\{m: (x_m)_1 = ju_n\}, \quad 1 \leq j \leq \varphi(n).
  \]
The \emph{climbing skeleton} of $\gamma$ is $\mS_{cl}(\gamma) = \{(\tau_j,x_{\tau_j}): 1 \leq j \leq \varphi(n)\}$.
A \emph{climbing segment} of $\gamma$ is a segment of $\gamma$ from $(\tau_{j-1},x_{\tau_{j-1}})$ to $(\tau_j,x_{\tau_j})$ for some $j$.  A climbing segment is \emph{short} if $\tau_j - \tau_{j-1} \leq 2n/\varphi(n)$, and \emph{long} otherwise.  (Note $n/\varphi(n)$ is the average length of the climbing segments in $\gamma$.)  Since the total length of $\gamma$ is $n$, there can be at most $\varphi(n)/2$ long climbing segments in $\gamma$, so there are at least $\varphi(n)/2$ short ones.  Let
\[
  \mJ_s(\gamma) = \{j \leq \varphi(n): \text{ the $j$th climbing segment of $\gamma$ is short} \},
  \]
\[
  \mJ_l(\gamma) = \{j \leq \varphi(n): \text{ the $j$th climbing segment of $\gamma$ is long} \},
  \]
\[
  J_l(\gamma) = \left( \cup_{j \in \mJ_l(\gamma)} (\tau_{j-1},\tau_j) \right) \cap \left\lfloor \frac{2n}{\varphi(n)} \right\rfloor \ZZ.
  \]
If no short climbing segment of $\gamma$ is clean, we say $\gamma$ is \emph{soiled}.  For soiled $\gamma$, for each $j \in \mJ_s(\gamma)$ there exist $\alpha_j(\gamma)<\beta_j(\gamma)$ in $[\tau_{j-1},\tau_j]$ for which the increment of $\gamma$ from $(\alpha_j,x_{\alpha_j})$ to $(\beta_j,x_{\beta_j})$ is inefficient.  (If $\alpha_j,\beta_j$ are not unique we make a choice by some arbitrary rule.)  
We can reorder the values $\{\tau_j, j \leq \varphi(n)\} \cup \{\alpha_j,\beta_j: j \in \mJ_s(\gamma)\} \cup J_l(\gamma)$ into a single sequence $\{\sigma_j, 1 \leq j \leq N(\gamma)\}$ with $\varphi(n) \leq N(\gamma) \leq 4\varphi(n)$, such that at least $\varphi(n)/2$ of the increments $(\sigma_j - \sigma_{j-1},x_{\sigma_j} - x_{\sigma_{j-1}}), j \leq N(\gamma),$ are inefficient.  The \emph{augmented climbing skeleton} of $\gamma$ is then the sequence 
$\mS_{acl}(\gamma) = \{(\sigma_j,x_{\sigma_j}): 1 \leq j \leq N(\gamma)\}$.  The set of all augmented climbing skeletons of soiled paths from $(0,0)$ to $(n,x^*)$ is denoted $\mC_{acl}$.

\begin{lemma} \label{existence}
Provided $n$ is large, there exists a path from $(0,0)$ to $(n,x^*)$ containing a short climbing segment which is clean.
\end{lemma}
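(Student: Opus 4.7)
The plan is to argue by contradiction.  Suppose every path $\gamma$ from $(0,0)$ to $(n,x^*)$ is soiled.  Then the augmented climbing skeleton $\mS_{acl}(\gamma)\in\mC_{acl}$ is defined for every such $\gamma$, and by construction its ordered sequence $\{(\sigma_j,x_{\sigma_j})\}_{1\leq j\leq N(\gamma)}$ of at most $4\varphi(n)$ points has at least $\varphi(n)/2$ of its sub-increments $(\sigma_j-\sigma_{j-1},x_{\sigma_j}-x_{\sigma_{j-1}})$ inefficient.  Grouping paths by their augmented climbing skeleton gives $Z_{n,\omega}(x^*)=\sum_{\mS\in\mC_{acl}}Z_{n,\omega}(\mS)$, and hence
\[
  \log Z_{n,\omega}(x^*)\leq\log|\mC_{acl}|+\max_{\mS\in\mC_{acl}}\log Z_{n,\omega}(\mS).
\]
I will show this forces $\E\log Z_{n,\omega}(x^*)<np(\beta)-n^{1/2}\theta(n)$, contradicting the adequacy of $x^*$.

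For each $\mS\in\mC_{acl}$, set $\sigma_0=0,x_{\sigma_0}=0$ and $\sigma_{N+1}=n,x_{\sigma_{N+1}}=x^*$, and write $\zeta_j^\mS:=\log Z_{\sigma_j-\sigma_{j-1},\omega}((\sigma_{j-1},x_{\sigma_{j-1}}),(\sigma_j,x_{\sigma_j}))$ for the $j$th point-to-point sub-block.  Then $\log Z_{n,\omega}(\mS)\leq\sum_{j}\zeta_j^\mS$ and taking expectations yields $\sum_j\E\zeta_j^\mS=np(\beta)-\sum_j s(\sigma_j-\sigma_{j-1},x_{\sigma_j}-x_{\sigma_{j-1}})$.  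Since at least $\varphi(n)/2$ of these sub-increments are inefficient, each contributing a deficit $>4n^{1/2}\rho(n)$, we obtain the \emph{uniform} bound $\sum_j\E\zeta_j^\mS\leq np(\beta)-2\varphi(n)n^{1/2}\rho(n)$ for every $\mS\in\mC_{acl}$.  With the choices in \eqref{slowlyvar} this equals $np(\beta)-(2/K_{13})n^{1/2}\theta(n)\log\log n$, i.e.\ the deficit exceeds the adequacy threshold $n^{1/2}\theta(n)$ by a factor $\log\log n$; this is the slack to be spent on entropy and fluctuation overhead.

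The key structural observation for fluctuations is that all sub-blocks in any $\mS\in\mC_{acl}$ have length $\leq n_{\max}:=\lfloor 2n/\varphi(n)\rfloor$: short climbing segments have length at most $2n/\varphi(n)$ by definition, and long segments are subdivided to this resolution by $J_l$.  Lemma~\ref{sums}(ii) applied to each fixed $\mS$ therefore gives
\[
  \P\bigg(\sum_j\big(\zeta_j^\mS-\E\zeta_j^\mS\big)>2a\bigg)\leq 2^{4\varphi(n)+1}\exp\!\Bigl(-K_{13}a\sqrt{\varphi(n)/(2n)}\Bigr).
\]
A crude count yields $|\mC_{acl}|\leq[(n+1)(2n+1)^d]^{4\varphi(n)}\leq e^{C_1\varphi(n)\log n}$ for some $d$-dependent $C_1$, so a union bound over $\mC_{acl}$ followed by tail integration gives $\E\max_\mS\big(\sum_j\zeta_j^\mS-\sum_j\E\zeta_j^\mS\big)_+\leq C_2 n^{1/2}\theta(n)$, the bound saturating at $a\sim\varphi(n)\log n\cdot\sqrt{2n/\varphi(n)}=\sqrt{2}\,n^{1/2}(\log n)^{5/2}=\sqrt{2}\,n^{1/2}\theta(n)$.

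Assembling the pieces,
\[
  \E\log Z_{n,\omega}(x^*)\leq\log|\mC_{acl}|+np(\beta)-\tfrac{2}{K_{13}}n^{1/2}\theta(n)\log\log n+C_2 n^{1/2}\theta(n),
\]
and since $\log|\mC_{acl}|=O((\log n)^4)=o(n^{1/2}\theta(n))$ the $\log\log n$-accelerated negative term eventually dominates $C_2 n^{1/2}\theta(n)$, giving $\E\log Z_{n,\omega}(x^*)<np(\beta)-n^{1/2}\theta(n)$ for large $n$, contradicting adequacy of $x^*$.  The main obstacle is the tight matching of scales: because the adequacy gap and the inefficiency deficit differ by only a factor $\log\log n$, both $\log|\mC_{acl}|$ and the concentration error must be pushed well below $n^{1/2}\theta(n)\log\log n$.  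This is what forces the use of the block scale $2n/\varphi(n)$ (rather than $n$) in Lemma~\ref{sums}(ii), which supplies the extra factor $\sqrt{\varphi(n)}=(\log n)^{3/2}$ in the concentration exponent needed to absorb the polynomial-in-$\log n$ entropy of $\mC_{acl}$.
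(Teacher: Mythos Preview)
Your argument is correct and uses the same core machinery as the paper: the uniform mean bound $\E\log Z_{n,\omega}(\mS)\leq np(\beta)-2\varphi(n)n^{1/2}\rho(n)$ coming from the $\varphi(n)/2$ inefficient increments, the concentration from Lemma~\ref{sums}(ii) applied at block scale $\leq 2n/\varphi(n)$, and the entropy bound $\log|\mC_{acl}|=O(\varphi(n)\log n)$. The only difference is in packaging: the paper compares two probabilities, showing $\P(\log Z_{n,\omega}(x^*)>p(\beta)n-2n^{1/2}\theta(n))>1/2$ via Proposition~\ref{avgconc}(ii) while $\P(\log Z_{n,\omega}(\mD^*)>p(\beta)n-2n^{1/2}\theta(n))$ is tiny, whence $\P(Z_{n,\omega}(\mD^*)<Z_{n,\omega}(x^*))>0$; you instead integrate the tail to bound $\E\max_{\mS}(\sum_j\zeta_j^{\mS}-\E\sum_j\zeta_j^{\mS})_+$ and contradict adequacy of $x^*$ directly at the level of expectations. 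Your route is marginally more self-contained (it does not invoke the extra concentration step for $\log Z_{n,\omega}(x^*)$), but the two are essentially the same proof.
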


Note that Lemma \ref{existence} is a purely deterministic statement, since the property of being clean does not involve the configuration $\omega$.  

Translating the segment obtained in Lemma \ref{existence} to begin at the origin, we obtain a path $\alpha^*$ from $(0,0)$ to some site $(m^*,y^*)$, with the following properties:
\begin{equation} \label{properties}
  m^* \leq \frac{2n}{\varphi(n)}, \quad y_1^* = u_n \quad \text{and $\alpha^*$ is clean}.
  \end{equation}
By definition, every increment of $\alpha^*$ is efficient.  The proof of (\cite{Al11}, Lemma 2.3) then applies unchanged:  for $n_1=2d(m^*+1)$, given $p \in \hH_{n_1}$ with $\pi_0(p)=0$, one can find $4d+1$ segments of $\alpha^*$ (or reflections of such segments through coordinate hyperplanes, which are necessarily also efficient) such that the sum of the increments made by these segments is $p$.  By subadditivity this shows that $s(p) \leq (4d+1)n^{1/2}\rho(n)$, proving Lemma \ref{fastpi}.
  
\begin{proof}[Proof of Lemma \ref{existence}]
Let $\mD^*$ denote the set of all soiled paths from $(0,0)$ to $(n,x^*)$.  We will show that $\P(Z_{n,\omega}(\mD^*) < Z_{n,\omega}(x^*))>0$, which shows that unsoiled paths exist, proving the lemma.

Since $x^*$ is adequate, it follows from Proposition \ref{avgconc}(ii) that 
\begin{equation} \label{bigprob}
  \P\bigg( \log Z_{n,\omega}(x^*) > p(\beta)n - 2n^{1/2}\theta(n) \bigg) > \half.
\end{equation}
On the other hand, for paths in $\mathcal{D}^*$, fixing $\mS_{acl} = \{(\sigma_j,x_{\sigma_j}): 1 \leq j \leq r\} \in \mC_{acl}$, since there are at least $\varphi(n)/2$ inefficient increments $(\sigma_j - \sigma_{j-1},x_{\sigma_j} - x_{\sigma_{j-1}})$, we have
\begin{equation} \label{ESacl}
  \E \log Z_{n,\omega}(\mS_{acl}) \leq p(\beta)n - 2n^{1/2} \varphi(n)\rho(n).
\end{equation}
Hence by Lemma \ref{sums}(ii) (with $n_{\max} = 6dn/\varphi(n)$),
\begin{align} \label{eachSacl}
  \P&\left( \log Z_{n,\omega}(\mS_{acl}) \geq  p(\beta)n - n^{1/2} \varphi(n)\rho(n) \right) \notag\\
  &\leq \P\left( \log Z_{n,\omega}(\mS_{acl}) - \E\log Z_{n,\omega}(\mS_{acl}) \geq n^{1/2} \varphi(n)\rho(n) \right) \notag\\
  &\leq  2^{4\varphi(n)+1} \exp\left( -\frac{K_{13}}{2} n^{1/2} \varphi(n)\rho(n)\left( \frac{\varphi(n)}{6dn} \right)^{1/2} \right) \notag\\
  &\leq e^{-(\log n)^4(\log\log n)/6d^{1/2}}.
\end{align}
Since $3\theta(n) \leq \varphi(n)\rho(n)$ and
\begin{equation} \label{Caclsize}
  |\mC_{acl}| \leq (2n)^{4(d+1)\varphi(n)},
\end{equation}
it follows from \eqref{eachSacl} that, in contrast to \eqref{bigprob},
\begin{align} \label{allSacl}
  \P&\left( \log Z_{n,\omega}(\mD^*) > p(\beta)n - 2n^{1/2}\theta(n) \right) \notag\\
  &\leq \P\left( \log |\mC_{acl}| + \max_{\mS_{acl} \in \mC_{acl}} \log Z_{n,\omega}(\mS_{acl}) > p(\beta)n - 2n^{1/2}\theta(n) \right) \notag\\
  &\leq \P\left( \log Z_{n,\omega}(\mS_{acl}) \geq  p(\beta)n - 3n^{1/2}\theta(n) \text{ for some } \mS_{acl} \in \mC_{acl} \right) \notag\\
  &\leq |\mC_{acl}| e^{-(\log n)^4(\log\log n)/6d^{1/2}} \notag\\
  &\leq e^{-(\log n)^4(\log\log n)/12d^{1/2}}.
\end{align}
It follows from \eqref{bigprob} and \eqref{allSacl} that $\P(Z_{n,\omega}(\mD^*) < Z_{n,\omega}(x^*))>0$, as desired.
\end{proof}

\begin{remark} \label{exponents}
The exponents on $\log m$ in the definition \eqref{slowlyvar} of $\theta(m)$ and $\varphi(m)$ are not the only ones that can be used.  The proof of Lemma \ref{CGcorrec} requires (ignoring constants) $\varphi(n) \geq (\log n)^3$, Lemma \ref{manybad2} requires $\theta(n) \geq (\log n)^{5/2}$ and Lemma \ref{existence} requires $\varphi(n) \geq \theta(n)(\log n)^{1/2}$.
\end{remark}

\end{document}